\documentclass{amsart}

\usepackage{ stmaryrd }

\usepackage{amsmath}
\usepackage{amssymb}
\usepackage{amsthm}
\usepackage{amsfonts,epsfig}
\usepackage{mathptmx}

\usepackage{latexsym}
\usepackage{mathrsfs}
\usepackage[utf8]{inputenc}
\usepackage[T1]{fontenc}
\usepackage{ae,aecompl}
\usepackage{braket}
\usepackage{color}

\usepackage{tikz}
\usepackage{tikz-cd}
\usepackage{tikz-3dplot}
\usetikzlibrary{shapes.geometric}
\usetikzlibrary{intersections}
\usetikzlibrary{calc,angles,positioning,intersections,quotes,decorations.markings}

\usepackage{mathrsfs}

\usepackage[
backend=biber,
style=alphabetic,
sorting=ynt
]{biblatex}
\usepackage{hyperref}
\usepackage{subfiles}

\theoremstyle{plain}
\newtheorem{definition}{Definition}[section]
\newtheorem{theorem}[definition]{Theorem}
\newtheorem{lemma}[definition]{Lemma}
\newtheorem{corollary}[definition]{Corollary}
\newtheorem{proposition}[definition]{Proposition}

\theoremstyle{definition}

\newtheorem{remark}[definition]{Remark}

\newcommand{\R}{\mathbb{R}}
\newcommand{\C}{\mathbb{C}}

\newcommand{\HH}{\mathbb{H}}
\newcommand{\g}{\mathfrak{g}}

\DeclareMathOperator{\PSL}{PSL}
\DeclareMathOperator{\PSU}{PSU}
\DeclareMathOperator{\Id}{Id}
\DeclareMathOperator{\Isom}{Isom}

\DeclareMathOperator{\Ep}{Ep}
\newcommand{\comment}[1]{}

\title[AW extension, AC curve and Epstein-Poincar\'{e} Surfaces]{Ahlfors--Weill Extension, Asymptotically Conformal Curves and Epstein-Poincar\'{e} Surfaces}
\author{Ming Hong Tee}
\address{Ming Hong Tee, Department of Mathematics, Boston College,
Chestnut Hill, Ma 02467}
\date{}

\begin{document}

\maketitle
\begin{abstract}
    The Ahlfors--Weill construction explicitly extends a univalent function $f$ with Schwarzian norm less than $1/2$ to a quasiconformal homeomorphism of the Riemann sphere, an extension that can be understood geometrically through Epstein surfaces. For asymptotically conformal curves, the uniformization map can be extended to a larger disk, and Loewner theory can be used to verify that this extension is quasiconformal. In this paper, we construct a local quasireflection with such an extension. Using this local quasireflection, we modify classical estimates of Ahlfors and show that domains bounded by asymptotically conformal curves have asymptotically similar hyperbolic and quasihyperbolic metrics. Furthermore, by combining these estimates with a geometric reinterpretation of the extension through Epstein surfaces, we show that the Epstein-Poincaré surfaces of the complementary domains are arbitrarily close to one another near the curve.
\end{abstract}

\section{Introduction}
The classical theorem of Nehari \cite{nehari} establishes conditions on the norm of the Schwarzian derivative of a locally univalent function $f$ defined on the unit disk under which $f$ is univalent. Subsequently, Ahlfors and Weil \cite{ahlforsweill} showed, through an explicit construction, that if the Schwarzian norm of $f$ is less than $1/2$, then $f$ admits a quasiconformal extension to the entire Riemann sphere. Later, Pommerenke \cite{Pommerenke1978} considered the class of asymptotically conformal curves---a subclass of quasiconformal curves---and showed that the corresponding uniformizing map $f$ from the unit disk onto the Jordan domain bounded by such a curve admits a quasiconformal extension to a larger disk. 

In \cite{ahlforsQuasiRef}, Ahlfors studied Jordan curves $\gamma$ in the Riemann sphere that admit a quasireflection, namely a quasiconformal map $j$ that permutes the two complementary Jordan domains and fixes $\gamma$ pointwise. For such a reflection, he proved three estimates relating a point $w$, its image $j(w)$, and the curves $\gamma$, with a constant $C(K)$ that depends only on the quasiconformal constant $K$ of $j$. 

In this paper, we study Ahlfors's bounds in the context of asymptotically conformal curves. To do so, we use the local quasiconformal extension results of Becker and Pommerenke \cite{PomBecker1978} and of Becker \cite{becker1987asymptotically}. Using these extensions, we construct a local quasireflection $j$, and we show that the constants in Ahlfors's estimates converge to $1$ as $w$ converges to $\gamma$. As a consequence, we show that the hyperbolic and quasihyperbolic metrics are asymptotically similar for domains bounded by asymptotically conformal curves.

In \cite{epstein-envelopes}, Epstein constructed natural surfaces in the hyperbolic three-space $\HH^3$ associated with a conformal metric on an open set in the Riemann sphere. We call such a surface the Epstein-Poincar\'{e} surface if the open set is equipped with the hyperbolic metric. If the open set is the image of a univalent map $f$ with Schwarzian norm less than $1/2$, using the curvature of the Epstein-Poincar\'{e} surface, Epstein showed that the normal flow of the surface foliates $\HH^3$ and limits to the Ahlfors--Weill construction, thus showing that the Ahlfors--Weill extension is a homeomorphism. 

In this paper, we study Epstein-Poincar\'{e} surfaces in the context of asymptotically conformal curves. As in the case where Schwarzian norm less than $1/2$, these surfaces tie the properties of the domain at the conformal boundary to the geometry of the surfaces in $\HH^3$. We illustrate this by giving a geometric proof of Becker's theorem (Theorem \ref{thm: local homeomorphism of AW extension}). Furthermore, using the modified bounds of Ahlfors and the ideas from this geometric proof, we show that the Epstein-Poincar\'{e} surfaces of the complementary domains are arbitrarily close to one another near the curve. 

Finally, we return to the case where the Schwarzian norm of $f$ is less than $1/2$ and compare the bounds found throughout the paper. More precisely, we establish a global bound on the quantity $EN(w)$ of Earle and Nag \cite{Earle:qcreflection} in terms of the Schwarzian norm of $f$, without assuming that the curve is asymptotically conformal. 

\section{Results} \label{sec: result}
Let $\Omega$ be a bounded Jordan domain in $\C$ and let $f$ a biholomorphic function from the unit disk $\Delta$ onto $\Omega$. The Schwarzian derivative of $f$ is defined as 
\begin{align*}
    Sf(z) =\left(\left(\frac{f''}{f'}\right)'(z) - \frac{1}{2}\left(\frac{f''}{f'}\right)^2(z)\right)dz^2.
\end{align*}
Note that $Sf$ is a holomorphic quadratic differential on $\Delta$. For any holomorphic quadratic differential $q$ defined on $\Delta$, the norm of $q$ is given by
\begin{align*}
    \|q\|_\Delta = \sup_{z\in\Delta} \|q(z)\|_\Delta
\end{align*}
where $\|q(z)\|_\Delta = \frac{|q(z)|}{\rho_\Delta(z)}$ and $\rho_\Delta(z) = \frac{4}{(1-|z|^2)^2}|dz|^2$ is the hyperbolic metric on $\Delta$. The Schwarzian norm of a locally univalent function $f$ is given by $\|Sf\|_\Delta$. 

The following theorem due to Nehari provides a necessary and a sufficient condition for a locally univalent function $f:\Delta \to \C$ to be univalent. 
\begin{theorem}(See \cite{nehari})
    Let $f:\Delta \to \C$ be a locally univalent function. If $f$ is univalent, its Schwarzian norm is less than $\frac{3}{2}$. Conversely, if $f:\Delta\to \C$ is a locally univalent function with $\|Sf\|_\Delta<\frac{1}{2}$, then $f$ is univalent.
\end{theorem}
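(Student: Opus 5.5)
The theorem has two parts, and I will prove them separately.

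\emph{Necessity (the bound $\frac{3}{2}$).} I will use invariance under disk automorphisms together with the area theorem. Fix $z_0\in\Delta$ and let $\tau(z)=\frac{z+z_0}{1+\bar z_0 z}$. The Schwarzian of a M\"obius map vanishes, so the chain rule for Schwarzians gives $S(f\circ\tau)(0)=Sf(z_0)\,\tau'(0)^2$. Since $\tau'(0)=1-|z_0|^2$, and the norm $\|\cdot\|_\Delta$ is built from the hyperbolic metric $\rho_\Delta$, we get $\|S(f\circ\tau)(0)\|_\Delta=\|Sf(z_0)\|_\Delta$. It is therefore enough to bound $|Sg(0)|/\rho_\Delta(0)=|Sg(0)|/4$ for univalent $g$.

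Post-composing with an affine map, I normalize $g(z)=z+a_2z^2+a_3z^3+\cdots$. A direct expansion gives $Sg(0)=6(a_3-a_2^2)$. Now consider $h(\zeta)=1/g(1/\zeta)=\zeta-a_2+(a_2^2-a_3)\zeta^{-1}+\cdots$, which is univalent on $|\zeta|>1$. The area theorem $\sum n|b_n|^2\le 1$ gives $|a_2^2-a_3|\le 1$. Hence $|Sg(0)|\le 6$, and so $\|Sf\|_\Delta\le 6/4=\frac{3}{2}$.

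\emph{Sufficiency.} I will use the standard ODE argument. Write $f=u_1/u_2$, where $u_1,u_2$ are linearly independent solutions of $u''+\frac12 Sf\,u=0$. Suppose $f(z_1)=f(z_2)$ with $z_1\ne z_2$. Then some nontrivial solution $u=u_1-cu_2$ vanishes at both points. After a disk automorphism and a rotation, which preserve the norm bound by the computation above, I may assume $z_1=-r$ and $z_2=r$ with $0<r<1$.

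Restrict $u$ to the segment $[-r,r]$ and set $p=\frac12 Sf$. The bound $\|Sf\|_\Delta<\frac12$ gives $|p(x)|<\frac{1}{(1-x^2)^2}$. Multiply $u''+pu=0$ by $\bar u$ and integrate by parts; the boundary terms vanish because $u(\pm r)=0$. This yields
\[
\int_{-r}^r|u'|^2\,dx=\int_{-r}^r p|u|^2\,dx\le\int_{-r}^r\frac{|u|^2}{(1-x^2)^2}\,dx,
\]
with strict inequality unless $u\equiv0$.

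\emph{The main obstacle} is to show that the sharp Hardy-type inequality $\int|v'|^2\ge\int\frac{|v|^2}{(1-x^2)^2}$ holds for every $v$ vanishing at $\pm r$. That would give a contradiction. I plan to do this by writing $v=\sqrt{1-x^2}\,w$, the function $\sqrt{1-x^2}$ being a positive solution of $y''+\frac{y}{(1-x^2)^2}=0$. Integrating by parts, $\int|v'|^2-\int\frac{|v|^2}{(1-x^2)^2}=\int(1-x^2)|w'|^2\ge0$, with equality only when $w$ is constant, that is $w=0$. The cross term $\int y y'(|w|^2)'$ integrates by parts against $y''$ exactly. Combined with the strict inequality above, this forces $u\equiv0$, a contradiction. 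Hence $f$ is univalent.
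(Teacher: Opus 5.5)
The paper gives no proof of this theorem; it quotes it from Nehari as background, so there is no in-paper argument to compare with. Your proof is correct and is the classical one.

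For necessity, M\"obius invariance of $\|Sf(z)\|_\Delta$ reduces the estimate to $z=0$. There $Sg(0)=6(a_3-a_2^2)$, and the area theorem applied to $1/g(1/\zeta)$ gives $|a_3-a_2^2|\le 1$. For sufficiency, a nontrivial solution of $u''+\frac12 Sf\,u=0$ vanishing at $\pm r$ violates the Hardy-type inequality you obtain by factoring out the positive solution $\sqrt{1-x^2}$ of $y''+(1-x^2)^{-2}y=0$. I checked the key computations and they are right:
\begin{itemize}
\item $\tau'(0)=1-|z_0|^2$;
\item $1/g(1/\zeta)=\zeta-a_2+(a_2^2-a_3)\zeta^{-1}+\cdots$;
\item $y''=-(1-x^2)^{-3/2}$;
\item $\int|v'|^2-\int|v|^2(1-x^2)^{-2}=\int(1-x^2)|w'|^2$.
\end{itemize}
The only point you leave implicit is the choice $u_2=(f')^{-1/2}$, $u_1=fu_2$. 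A branch of $(f')^{-1/2}$ exists because $f'\neq 0$ on the simply connected disk.

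Two remarks. First, your necessity argument yields $\|Sf\|_\Delta\le\frac32$, and that is the correct form of the statement. The strict inequality as printed is false: the Koebe function $k(z)=z/(1-z)^2$ has $Sk(0)=-6$, hence $\|Sk\|_\Delta=\frac32$. No argument could prove ``less than $\frac32$''.

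Second, the equality clause in your Hardy inequality ($w$ constant, hence $w\equiv 0$) is not needed under the strict hypothesis $\|Sf\|_\Delta<\frac12$. It does, however, let the same argument cover Nehari's boundary case $\|Sf\|_\Delta\le\frac12$. There the chain $\int|u'|^2\le\int|p||u|^2\le\int|u|^2(1-x^2)^{-2}\le\int|u'|^2$ forces equality throughout, so $w'\equiv 0$, and with $w(\pm r)=0$ this gives $u\equiv 0$ on $[-r,r]$, a contradiction.
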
 

The Ahlfors--Weill extends Nehari's theorem as follows:
\begin{theorem}(See \cite{ahlforsweill}, \cite{ahlforsQuasiRef}) \label{thm: AW thm}
    Given any holomorphic quadratic differential $q$ on $\Delta$ with $\|q\|_\Delta <\frac{1}{2}$, there exists a univalent function $f:\Delta \to \C$ such that $Sf = q$ and $f$ admits a quasiconformal extension to the Riemann sphere.
\end{theorem}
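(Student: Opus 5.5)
Given $q$ with $\|q\|_\Delta<\tfrac12$, I first produce $f$ by solving the Schwarzian equation. Take two linearly independent solutions $y_1,y_2$ of $y''+\tfrac12 q\,y=0$ on $\Delta$, normalized by Wronskian $y_1y_2'-y_1'y_2=1$, and set $f=y_1/y_2$. Since $\Delta$ is simply connected the solutions are global, $f$ is locally univalent with $f'=1/y_2^2$, and $Sf=q$. By Nehari's theorem (the converse part stated above), $\|Sf\|_\Delta<\tfrac12$ forces $f$ to be univalent. After a M\"obius change we may assume $f(\Delta)$ is bounded, which is harmless because post-composing with a M\"obius map does not change $Sf$.

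For the extension I would use the explicit Ahlfors--Weill formula. For $|z|>1$ put $z^*=1/\bar z$ and define
\begin{align*}
F(z)= f(z^*)+\frac{(z-z^*)f'(z^*)}{1-\tfrac12 (z-z^*)\frac{f''}{f'}(z^*)},
\end{align*}
with $F=f$ on $\overline\Delta$ and $F(\infty)$ defined by the limit. Equivalently $F(z)=(y_1(\zeta)+(z-\zeta)y_1'(\zeta))/(y_2(\zeta)+(z-\zeta)y_2'(\zeta))$ with $\zeta=z^*$, which is the osculating M\"obius map of $f$ at $\zeta$ evaluated at $z$. A direct computation of $\partial F$ and $\bar\partial F$ in this form gives the Beltrami coefficient
\begin{align*}
\mu_F(z)= -\tfrac12\, Sf(\zeta)\,(1-|\zeta|^2)^2\frac{\zeta^2}{|\zeta|^2},\qquad \zeta=1/\bar z,
\end{align*}
so $|\mu_F|\le 2\|q\|_\Delta<1$ uniformly (with the normalization $\rho_\Delta=4(1-|z|^2)^{-2}|dz|^2$). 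Thus $F$ is locally quasiconformal off the unit circle, provided it is locally injective there; the Jacobian $|\partial F|^2(1-|\mu|^2)$ is positive wherever $\partial F\neq 0$, and the Wronskian normalization shows $\partial F\ne0$.

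The main obstacle is global injectivity and continuity across $\partial\Delta$, because $f$ need not extend smoothly to the circle. I would first treat the dilated maps $f_r(z)=f(rz)$, $r<1$: these are analytic on a neighborhood of $\overline\Delta$, satisfy $\|Sf_r\|_\Delta\le\|q\|_\Delta$, and their extensions $F_r$ are continuous on $\hat\C$ and smooth off the circle. For $F_r$ I would show it is a local homeomorphism of the sphere (near the circle it is the identity-like gluing of $f_r$ with its osculating extension, which agrees to first order) and that it is proper, hence a covering of $\hat\C$, hence a homeomorphism since $\hat\C$ is simply connected. Alternatively, following Epstein's viewpoint mentioned in the introduction, the injectivity comes from the fact that the normal flow of the Epstein surface of $f_r$ foliates $\HH^3$. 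Then $F_r$ are $K$-quasiconformal with $K=(1+2\|q\|)/(1-2\|q\|)$ independent of $r$, normalized at three points, so by compactness of normalized $K$-quasiconformal families a subsequence converges uniformly to a $K$-quasiconformal homeomorphism, which on $\Delta$ equals $\lim f_r=f$ and off $\overline\Delta$ equals the pointwise limit $F$. This gives the quasiconformal extension of $f$.
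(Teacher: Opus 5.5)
The paper does not prove this theorem. It quotes it from Ahlfors--Weill and Ahlfors, and the only proof it points to is Epstein's geometric one: for $\|Sf\|_\Delta<1/2$ the normal flow of the Epstein--Poincar\'e surface foliates $\HH^3$, and its endpoints at $-\infty$ are exactly the values of $F$. Section~\ref{sec: AW} localizes this mechanism via Lemma~\ref{lem: naturality of osc} and Corollary~\ref{cor: flow at infinity are inj}.

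Your argument is instead the classical analytic one, and it is correct. The key steps all hold:
\begin{itemize}
\item The Beltrami computation gives $|\mu_F(z)|=2\|q(1/\bar z)\|_\Delta\le 2\|q\|_\Delta$ and $\partial_zF=-1/D^2\neq0$, where $D=y_2(\zeta)+(z-\zeta)y_2'(\zeta)$. So $F$ is a local diffeomorphism off $S^1$, including at $z=\infty$ in the chart $1/z$.
\item For the dilates, $\|Sf_r\|_\Delta\le\|q\|_\Delta$, since $r(1-|z|^2)\le 1-r^2|z|^2$.
\item The exterior formula $Mf_r(1/\bar z,z)$ is smooth on a neighborhood of $S^1$. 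It differs from the analytic continuation of $f_r$ by $O((|z|-1)^3)$, and $\partial_{\bar z}$ vanishes on $S^1$ because of the factor $(1-|\zeta|^2)^2$. Hence $F_r$ is $C^1$ across the circle with derivative $f_r'\neq0$, and the inverse function theorem applies.
\item A local homeomorphism of the compact, simply connected sphere is a homeomorphism.
\item Compactness of the uniformly $K$-quasiconformal family $F_r$, with $K=(1+2\|q\|_\Delta)/(1-2\|q\|_\Delta)$, finishes the passage $r\to1$.
\end{itemize}

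What your route buys is elementarity: a Jacobian computation plus topology, with the dilation absorbing the lack of boundary regularity of $f$. What it does not give is a localizable mechanism. The covering argument needs the bound on all of $\Delta$, so that the Jacobian is positive on all of $\Delta^*$, and it needs compactness of $\hat\C$. The Epstein picture instead proves injectivity directly near each boundary point from the curvature bound $k_\pm=\|Sf\|/(\|Sf\|\pm1)$, without passing through dilates. That is what lets the paper treat asymptotically conformal curves, where $\|Sf(z)\|_\Delta$ is small only near $S^1$. The same geometry is reused in Section~\ref{sec: dist btw Ep}.

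A few small corrections, none of which affect the argument:
\begin{itemize}
\item A priori, univalence (Nehari in its meromorphic form) only guarantees that $f=y_1/y_2$ omits one point of $\hat\C$. That suffices to post-compose to $f:\Delta\to\C$. Boundedness of $f(\Delta)$ is not available up front; it follows a posteriori, since $F(\Delta^*)$ is a nonempty open set disjoint from $f(\Delta)$. You never use it anyway.
\item The unimodular factor in $\mu_F$ should be $\zeta^2/\bar\zeta^2$ rather than $\zeta^2/|\zeta|^2$; only the modulus matters.
\item With your normalization $y_1y_2'-y_1'y_2=1$, one gets $f'=-1/y_2^2$.
\item The $F_r$ are not literally normalized. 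Their values at three fixed points of $\Delta$ converge to three distinct values of $f$, which is what gives normality and a nondegenerate limit.
\end{itemize}
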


The construction is given as follows: For each $z\in\Delta$, the osculating M\"{o}bius transformation of $f$ at $z$ is the unique fractional linear transformation $Mf(z,\cdot)$ that satisfies $Mf(z,z) = f(z)$, $Mf'(z,z) = f'(z)$ and $Mf''(z,z) = f''(z)$ where the prime denotes $\partial/\partial z$. The Ahlfors--Weill extension of $f$ is given by 
\begin{align*}
    F(z) = \begin{cases}
        f(z) &, z\in \overline{\Delta}, \\
        Mf(\frac{1}{\bar{z}},z) &,z\in \Delta^*,
    \end{cases}
\end{align*}
where $\Delta^*$ is the complement of the closed unit disk $\bar{\Delta}$ in the Riemann sphere $\hat{\C}$.
By a direct computation (for instance \cite{ahlforsQuasiRef}), the Beltrami differential of $F$ satisfies
\begin{align*}
    \mu_F(z) = \frac{F_{\bar{z}}}{F_z}(z)
    = \begin{cases}
        0 &, z\in \Delta, \\
        -2\frac{Sf(\frac{1}{\bar{z}})}{\rho_\Delta(\frac{1}{\bar{z}})} &, z\in \Delta^*. 
    \end{cases}
\end{align*}

In this paper, we investigate when $F$ is a quasiconformal homeomorphism from a neighborhood of the closed disk $\overline{\Delta}$ onto its image. The natural class of Jordan domains consists of the domains bounded by asymptotically conformal curves.

Let $\gamma$ be a closed Jordan curve in $\C$, and let $C(w_1,w_2)$ be the arc of $\gamma$ between $w_1$ and $w_2$ with the smaller Euclidean diameter. The curve $\gamma$ is quasiconformal if 
\begin{align*}
    \max_{w\in C(w_1,w_2)} \frac{|w_2-w| + |w-w_1|}{|w_2-w_1|}<\infty 
\end{align*}
as $|w_2-w_1| \to 0$; this holds if and only if $\gamma$ is the image of the unit circle under a quasiconformal map of $\C$ (see Theorem 9.14 of \cite{PommerenkeUniFn}). On the other hand, in \cite{Pommerenke1978}, $\gamma$ is an asymptotically conformal curve if and only if it satisfies
\begin{align*}
    \max_{w\in C(w_1,w_2)} \frac{|w_2-w| + |w-w_1|}{|w_2-w_1|}\to 1 
\end{align*}
as $|w_2-w_1| \to 0$. The following proposition shows that asymptotically conformal curves are candidates for the study of Ahlfors--Weill construction.

\begin{proposition}(See \cite{Pommerenke1978}, \cite{PomBecker1978})\label{prop: asymptotically conformal curve}
    Let $\gamma$ be a Jordan curve in $\C$ and $\Omega$ be the bounded domain enclosed by $\gamma$. Let $f:\Delta \to \Omega$ be a biholomorphic map. The following are equivalent:
    \begin{enumerate}
        \item The curve $\gamma$ is asymptotically conformal, 
        \item $\lim_{|z|\to 1^-}\left(\frac{f''}{f'}\right)(z)(1-|z|^2) = 0,$ 
        \item $\lim_{|z|\to 1^-} \|Sf(z)\|_\Delta = 0,$
        \item there exists a quasiconformal extension $F$ of $f$ to a larger disk such that $\mu_F(z) \to 0$ as $|z| \to 1^+$.
    \end{enumerate}
\end{proposition}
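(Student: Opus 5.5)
The plan is to prove the cycle $(2)\Rightarrow(3)\Rightarrow(4)\Rightarrow(1)\Rightarrow(2)$. The analytic implications are short, while $(1)\Rightarrow(2)$ carries the geometric content.

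\textbf{Step 1: $(2)\Rightarrow(3)$.} Write $g=f''/f'$ and $h(z)=(1-|z|^2)g(z)$; by (2), $h\to0$ as $|z|\to1$. Since $Sf=g'-\tfrac12 g^2$, the term $(1-|z|^2)^2g^2$ clearly tends to $0$. For the derivative term, apply the Cauchy estimate to $g$ on the disk $D(z,(1-|z|)/2)$. On that disk $(1-|\zeta|^2)$ is comparable to $1-|z|$, so
\[(1-|z|)^2|g'(z)|\lesssim (1-|z|)\max_{D}|g|\lesssim \max_D|h|\to0 .\]
Dividing by $\rho_\Delta$ then gives (3).

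\textbf{Step 2: $(3)\Rightarrow(4)$.} Choose $r<1$ with $\|Sf(\zeta)\|_\Delta<\epsilon<\tfrac12$ for $r<|\zeta|<1$. Define $F$ by the Ahlfors--Weill formula on $1<|z|<1/r$. The formula $\mu_F(z)=-2Sf(1/\bar z)/\rho_\Delta(1/\bar z)$ then gives $|\mu_F|\le 2\epsilon$, and $\mu_F\to0$ as $|z|\to1^+$. The real issue is that $F$ must be injective on the annulus, and gluing it to $f$ must produce a homeomorphism. For this I would use the Becker--Pommerenke route: $F$ has Jacobian $|F_z|^2(1-|\mu|^2)>0$, so it is a local homeomorphism. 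It extends $f$ continuously across $\partial\Delta$ because $Mf(1/\bar z,z)-f(z)\to0$, which follows from the standard bound $|Mf(\zeta,z)-f(z)|\lesssim |f'(\zeta)|(1-|\zeta|^2)$ together with (2). A local homeomorphism from a thin closed annulus that is injective on the inner circle is injective near that circle. Shrinking the annulus, and applying the argument principle or monodromy to the circles $|z|=s$, gives (4). This step is the main obstacle I expect; if necessary I would instead cite the Loewner-chain proof of \cite{PomBecker1978}.

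\textbf{Step 3: $(4)\Rightarrow(1)$.} Fix small $\delta$ and a thin annulus $A$ on which $|\mu_F|<\delta$. Extend $F|_{A\cup\Delta}$ to a global $K(\delta)$-quasiconformal map of $\hat\C$ with $K(\delta)\to1$; this is done locally on $\gamma$, since only small arcs matter. Take $w_1,w_2\in\gamma$ close together. After normalizing by an affine map, the restriction of $F$ near the preimage arc is close to a similarity, by compactness of normalized $K$-qc maps as $K\to1$. The image of a short circular arc under a near-similarity is therefore nearly a segment, which forces
\[\max_{w\in C(w_1,w_2)}\frac{|w_2-w|+|w-w_1|}{|w_2-w_1|}\to1 .\]

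\textbf{Step 4: $(1)\Rightarrow(2)$.} Argue by contradiction. Suppose $z_n\to\partial\Delta$ with $|h(z_n)|\ge c>0$. Renormalize by setting $f_n(\zeta)=\bigl(f(\phi_n(\zeta))-f(z_n)\bigr)/\bigl(f'(z_n)(1-|z_n|^2)\bigr)$, where $\phi_n$ is the disk automorphism sending $0\mapsto z_n$. The maps $f_n$ are normalized univalent, so a subsequence converges to a univalent $f_\infty$ with $|f_\infty''(0)|\ge c'>0$. By quasiconformality of $\gamma$ and (1), the rescaled boundaries converge, in the Hausdorff sense on compacta, to a line. Hence $f_\infty$ maps $\Delta$ onto a half-plane and is a Möbius map with the pole on $\partial\Delta$. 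A direct check then shows that the corresponding normalized quantity $(1-|\zeta|^2)f''/f'-2\bar\zeta$ vanishes at $0$, contradicting $|f_\infty''(0)|\ge c'>0$. The delicate point is the boundary convergence, i.e.\ the Carathéodory kernel argument; this would be my second hardest step.
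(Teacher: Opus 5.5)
The paper does not prove this proposition. It cites \cite{Pommerenke1978} for $(1)\Leftrightarrow(2)$ and \cite{PomBecker1978} for $(2)\Leftrightarrow(3)\Leftrightarrow(4)$, where $(2)\Rightarrow(4)$ comes from Loewner chains for the extension $P$. Your cycle is therefore a different, self-contained route. Step 1 is fine, but Step 2 has a genuine gap.

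\textbf{Step 2.} Positivity of the Jacobian gives local injectivity of $F$ only on the open annulus $1<|z|<1/r$, and of $f$ only on $\Delta$. It says nothing at points of $S^1$, and the principle you then invoke is false. Take a local coordinate in which the outer collar is the closed upper half-plane. The map $re^{i\theta}\mapsto re^{3i\theta}$ is a local diffeomorphism with positive Jacobian off $0$ and is the identity on $\R$, yet it is injective on no neighbourhood of $0$. Small dilatation does not save the principle either: compose $w\mapsto w^2$ with a quasiconformal map of dilatation close to $1$ that bends $\R$ by a small angle. The result branches at $0$ with injective boundary trace, and the image curve acquires a corner.

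What you actually need is local injectivity of the glued map at points of $S^1$, i.e.\ that the outer collar goes to the exterior side of $\gamma$ without folding back over $f(\Delta)$. That is the whole content of the implication, and it is exactly where smallness of $Sf$ must be used. Your appeal to the argument principle on $|z|=s$ presupposes injectivity on those circles, or local degree one on $S^1$, which is the same missing statement. This is what the paper's Epstein-surface proof of Becker's theorem works for (Lemma~\ref{lem: local homeo of F}, via Lemma~\ref{lem: decreasing Eps}).

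Your continuity argument across $S^1$ also uses (2), which you do not have when proving $(3)\Rightarrow(4)$ in your cycle. Under (3) alone the denominator $1-\tfrac12 Nf(\zeta)(z-\zeta)$ is not controlled, and $(3)\Rightarrow(2)$ is itself part of the Becker--Pommerenke theorem.

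\textbf{Step 4.} The contradiction is misidentified. With $\phi_n(\zeta)=(\zeta+z_n)/(1+\bar z_n\zeta)$ and $h=(1-|z|^2)f''/f'$, one computes $f_n''(0)=h(z_n)-2\bar z_n$. Every normalized half-plane map has the form $f_\infty(\zeta)=\zeta/(1-\bar\omega\zeta)$ with $|\omega|=1$, so $f_\infty''(0)=2\bar\omega$. Hence a bound $|f_\infty''(0)|\ge c'$ contradicts nothing, and $(1-|\zeta|^2)f_\infty''/f_\infty'-2\bar\zeta$ equals $2\bar\omega\neq0$ at $\zeta=0$.

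What you need is that the pole sits at $\omega=-\omega_0$, where $\omega_0=\lim z_n$. Then $h(z_n)=f_n''(0)+2\bar z_n\to 2\bar\omega+2\bar\omega_0=0$. This needs an extra argument, for instance:
\begin{itemize}
\item $\phi_n^{-1}$ pushes every arc of $S^1$ away from $\omega_0$ towards $-\omega_0$, while $f_n$ sends it to infinity;
\item a Beurling-type harmonic measure estimate in the rescaled domains, together with $f_n^{-1}\to f_\infty^{-1}$, shows that points of the limit half-plane far from its boundary have preimages near $-\omega_0$.
\end{itemize}
Without this, your blow-up proves only $(1)\Rightarrow(3)$ (because $Sf_\infty\equiv0$ and $S$ is M\"obius invariant), which does not close the cycle.

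\textbf{Step 3.} Drop the global extension: one with dilatation tending to $1$ is impossible unless $\gamma$ is a circle. The normal-family argument only needs $F$ to be $K_n$-quasiconformal, with $K_n\to1$, on collars of $S^1$ whose width is large compared with $|z_1-z_2|$. With that change the step works.
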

The quantity $\frac{f''(z)}{f'(z)}$ is known as the nonlinearity of $f$ and we will denote it by $Nf$. The equivalence of (1) and (2) is shown in Theorem 1 of \cite{Pommerenke1978} and the equivalence of (2), (3) and (4) is shown in Theorem 1 of \cite{PomBecker1978}. Note that in the proof of Theorem 1 of \cite{PomBecker1978}, Becker and Pommerenke show the implication from (2) to (4) by constructing a desired quasiconformal extension through the use of the Loewner differential equation. The extension they consider is 
\begin{align*}
    P(z) = \begin{cases}
        f(z) &, z\in \Delta, \\
        f(\frac{1}{\bar{z}}) + (z-\frac{1}{\bar{z}})f'(\frac{1}{\bar{z}}) &,z\in \Delta^*.
    \end{cases}
\end{align*}

Denote $\Delta_r = r\Delta$ and $A_{a,b} = \Delta_b - \overline{\Delta_a}$. In line with our discussion of Ahlfors--Weill, Becker \cite{becker1987asymptotically} shows that the Ahlfors--Weill construction does in fact give a quasiconformal extension.

\begin{theorem}(See \cite{becker1987asymptotically}) \label{thm: local homeomorphism of AW extension}
    For any asymptotically conformal curve $\gamma$ and $f:\Delta \to \Omega$ a biholomorphic map onto a bounded Jordan domain enclosed by $\gamma$, there exists $r<1$ such that $f$ admits a quasiconformal extension $F$ from $\Delta_{1/r}$ onto its image in $\hat{\C}$ satisfying 
    $$ F(z) = \begin{cases}
        f(z) &, z\in \Delta, \\
        Mf(\frac{1}{\bar{z}},z) &,z\in A_{1,1/r}.
    \end{cases}$$
\end{theorem}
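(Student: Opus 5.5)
Since the paper promises a geometric proof through Epstein surfaces, I would set up the argument that way: Epstein's normal-flow picture, localized near the unit circle. By Proposition \ref{prop: asymptotically conformal curve}, $\|Sf(z)\|_\Delta \to 0$ as $|z|\to 1^-$, so fix $r<1$ with $\|Sf(z)\|_\Delta < \epsilon$ on $A_{r,1}$, where $\epsilon$ is small and chosen later.

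\textbf{Epstein surface and flow.} Let $\Sigma\subset\HH^3$ be the Epstein--Poincar\'{e} surface of $\Omega$, parametrized by $z\in\Delta$. The point $\mathrm{Ep}(z)$ is the point of the horosphere determined by $\rho_\Omega$ at $f(z)$. Equivalently, it is the image under the osculating M\"{o}bius map $Mf(z,\cdot)$ of the origin-like point attached to $z$ for the Poincar\'{e} metric on $\Delta$. Let $\nu(z)$ be the unit normal pointing away from $f(z)$, and consider the normal flow $\Phi_t(z)=\exp(t\nu(z))$. The geodesic ray $t\mapsto \Phi_t(z)$ lands at $Mf(z,1/\bar z)$, so $\Phi_\infty(z)=F(1/\bar z)$.

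The key curvature input is Epstein's formula. The principal curvatures of $\Sigma$ at $z$ are $k_\pm$ with
$$\frac{1-k_\pm}{1+k_\pm}=\pm 2\|Sf(z)\|_\Delta$$
(up to the normalization in \cite{epstein-envelopes}). On $A_{r,1}$ this forces $|k_\pm|<1$ once $\epsilon<1/2$. So the part $\Sigma_r=\mathrm{Ep}(A_{r,1})$ is locally convex with curvatures strictly inside $(-1,1)$. Then each $\Phi_t$ is a local diffeomorphism there, and the induced metric grows like $e^{2t}$ with uniformly bounded distortion.

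\textbf{Injectivity near the boundary (main obstacle).} The hard step is showing the map $z\mapsto\Phi_\infty(z)$ is injective on some smaller annulus $A_{r',1}$. A local diffeomorphism is not enough, because the flow lines from $\Sigma_r$ could cross lines coming from the compact part $\mathrm{Ep}(\overline{\Delta_r})$, or could wrap around.

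I would try the following. A complete surface with principal curvatures bounded by $k<1$ is the boundary of an ``$\epsilon$-convex'' region: it is quasi-isometrically embedded and its normal flow is injective. Here I only have this near the ideal boundary, so I would localize in two steps.

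First, as $|z|\to1$, $\mathrm{Ep}(z)$ tends to $f(z)$ on $\partial\HH^3$, and $\Sigma$ is asymptotic to the convex-hull boundary of $\hat\C\setminus\Omega$. Near each boundary point the surface is then a small-curvature graph over a nearly totally geodesic plane.

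Second, use the osculating normalization. For $z_0$ near $\partial\Delta$, compose with $Mf(z_0,\cdot)^{-1}$, so the picture becomes a small perturbation of the identity, for which all normals are disjoint. The bound on the nonlinearity,
$$(1-|z|^2)|Nf(z)|\to0,$$
from Proposition \ref{prop: asymptotically conformal curve}(2), together with Koebe-type distortion, controls this perturbation uniformly on hyperbolic balls of a fixed radius $R$. This gives injectivity of $F$ on the corresponding pieces of $A_{1,1/r'}$. A compactness and diameter argument then rules out collisions between far-apart pieces: the image of $A_{1,1/r'}$ lies in a thin neighborhood of $\gamma$ that shrinks with $r'$, while points far apart on the circle map to points on $\gamma$ at definite distance.

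\textbf{Gluing and quasiconformality.} Quasiconformality follows from the Beltrami formula $\mu_F=-2Sf(1/\bar z)/\rho_\Delta(1/\bar z)$. This gives $|\mu_F|\le2\epsilon<1$ on $A_{1,1/r'}$, and $F$ is continuous across $\partial\Delta$ because $\Phi_\infty(z)\to f(z)$. Together with injectivity, $F$ is a homeomorphism of $\Delta_{1/r'}$ onto its image. It is quasiconformal, since the unit circle is removable for quasiconformality, and after renaming $r'$ as $r$ this gives the statement.
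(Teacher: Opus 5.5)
\textbf{There is a genuine gap in your injectivity step, and that step is the whole content of the theorem.}

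Your osculating normalization by $Mf(z_0,\cdot)^{-1}$ controls only certain pairs $z,w\in A_{1,1/r'}$: those whose reflections $1/\bar z, 1/\bar w$ lie in a common hyperbolic ball of fixed radius $R$. These are pairs at comparable depths whose Euclidean separation is at most a constant times their distance to $S^1$. Your compactness argument rules out $F(z)=F(w)$ only when $z/|z|$ and $w/|w|$ are a definite distance apart.

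Pairs that are Euclidean-close but hyperbolically far escape both arguments. Examples are $z=1+\delta$, $w=1+\delta^2$, or two points at depth $\delta$ separated tangentially by $\sqrt{\delta}$. The pieces on which you have injectivity shrink to zero Euclidean size at $S^1$, so no choice of $R$ and $r'$ lets compactness patch them together.

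Your first localization step does not close this. The surface is a small-curvature graph over a nearly totally geodesic plane only at each bounded hyperbolic scale, not on a whole Euclidean neighborhood of a boundary point. The reason is that an asymptotically conformal curve need not have a tangent: the best-approximating circle may rotate slowly, and without bound, as the scale shrinks. You also never rule out collisions between $F(A_{1,1/r'})$ and $f(\overline{\Delta})$, including $F(w)\in\gamma$. These again involve arbitrarily different scales.

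What is needed is a scale-free injectivity statement on a full Euclidean neighborhood of each $p\in S^1$, which is what the paper supplies.
\begin{itemize}
\item Take $U_p$ so that hyperbolic geodesics between its points stay, with a hyperbolic margin, where $\|Sf\|_\Delta<K$. By Lemma~\ref{lem: bound on length and curv}, the image of such a geodesic under $\Ep_\Omega\circ f$ has speed at least $1-K$ and geodesic curvature at most $5K/2$, while $|k_\pm|\le K/(1-K)$.
\item Normalize so that the normal at one endpoint is vertical. The log-height $h$ along the path then satisfies $h'(0)=0$ and $h''-h'^2<0$, so it strictly decreases. Hence the normal at the other endpoint is not vertical, and the backward endpoints $\g_{-\infty}$ differ, no matter how far apart the two points are hyperbolically (Lemma~\ref{lem: decreasing Eps}, Corollary~\ref{cor: flow at infinity are inj}).
\item The same mechanism, together with Remark~\ref{rmk: bounded by horoballs}, handles pairs straddling $S^1$. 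Invariance of domain then gives a local homeomorphism at every point of $S^1$.
\item Only at that point do compactness (Lemma~\ref{lem: homeo on annulus}) and a Jordan--Schoenflies argument separating $F(\Delta)$ from $F(A_{1,1/r'})$ finish the proof.
\end{itemize}

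A minor correction: the principal curvatures are $k_\pm=\|Sf(z)\|_\Delta/(\|Sf(z)\|_\Delta\pm1)$. They have opposite signs, so the surface is not locally convex. Your displayed relation would give $k_-=(1+2\|Sf(z)\|_\Delta)/(1-2\|Sf(z)\|_\Delta)>1$, although your conclusion $|k_\pm|<1$ is correct.
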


In \cite{ahlforsQuasiRef}, Ahlfors studies a Jordan curve in $\hat{\C}$ that admits a quasiconformal reflection, that is, a quasiconformal map that permutes the two Jordan domains and fixes the Jordan curve pointwise. For such a reflection $j$, he has shown the following estimate:
\begin{lemma}(See \cite{ahlforsQuasiRef})
    For all $w_0 \in \gamma$,
    \begin{itemize}
        \item $\frac{1}{C(K)} \leq \left|\frac{w-w_0}{j(w)-w_0}\right| \leq C(K),$
        \item $\frac{|w-j(w)|}{2d(w,\gamma)} \leq C(K),$
        \item $\frac{1}{C(K)}\leq \frac{d(w,\gamma)}{d(j(w),\gamma)} \leq C(K).$
    \end{itemize}
    where $C(K)$ is a constant depending only on $K$, the $L^\infty$ norm of $\frac{j_z}{j_{\bar{z}}}$. 
\end{lemma}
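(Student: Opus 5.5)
The three estimates follow from a single compactness principle for normalized $K$-quasiconformal maps, applied after conjugating $j$ by similarities. Fix $w$ (say $w\notin\gamma$, the case $w\in\gamma$ being trivial since $j(w)=w$) and $w_0\in\gamma$. Let $A$ be the affine map $A(\zeta)=w_0+(w-w_0)\zeta$, and set $\tilde j=A^{-1}\circ j\circ A$. Then $\tilde j$ is a $K$-quasiconformal reflection in the curve $\tilde\gamma=A^{-1}(\gamma)$, with $0\in\tilde\gamma$, $\infty\in\tilde\gamma$ (if $\gamma$ passes through $\infty$; otherwise $\tilde j$ fixes $\infty$ as a point of $\tilde\gamma$ after a preliminary normalization), and $\tilde j$ sends $1$ to $\tilde j(1)=(j(w)-w_0)/(w-w_0)$. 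Since the dilatation is invariant under similarities, the first estimate reduces to a uniform bound $1/C(K)\le|\tilde j(1)|\le C(K)$.

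For this bound, the natural tool is the normalized family of $K$-quasiconformal self-maps of $\hat{\C}$ (or, using $\bar{\tilde j}$, orientation-preserving ones) fixing $0$ and $\infty$. I would use the standard distortion lemma: a $K$-qc map fixing $0,\infty$ satisfies $|h(z_1)|/|h(z_2)|\le\lambda(K)$ whenever $|z_1|=|z_2|$. The key observation is that $\tilde\gamma$ connects $0$ to $\infty$, so it meets the unit circle at some point $\zeta_0$, which $\tilde j$ fixes. Applying the circle distortion with $z_1=1$ and $z_2=\zeta_0$ gives $|\tilde j(1)|\le\lambda(K)|\zeta_0|=\lambda(K)$, and symmetrically $|\tilde j(1)|\ge 1/\lambda(K)$. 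This proves the first item with $C(K)=\lambda(K)$.

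The other two items are consequences of the first.

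\textbf{Second item.} Choose $w_0\in\gamma$ with $|w-w_0|=d(w,\gamma)$. Then
\[
|w-j(w)|\le|w-w_0|+|j(w)-w_0|\le(1+C(K))\,d(w,\gamma),
\]
so $|w-j(w)|/(2d(w,\gamma))$ is bounded by a constant depending only on $K$.

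\textbf{Third item.} With the same $w_0$, we get $d(j(w),\gamma)\le|j(w)-w_0|\le C(K)\,d(w,\gamma)$. Applying the same argument to $j(w)$ in place of $w$, and using that $j\circ j$ is the identity (or at least that $j$ is a reflection, so the roles of $w$ and $j(w)$ are symmetric), gives the reverse inequality.

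I expect the main obstacle to be the normalization in the first item. If $j$ is not an involution, or if $\gamma$ is bounded and so does not pass through $\infty$, I need a Möbius change of coordinates sending $w_0$ to $0$ and a second point of $\gamma$ to $\infty$ while keeping Euclidean ratios controlled. The cleanest approach is to apply the circle distortion lemma locally: use the three-point normalization $0,\zeta_0,\infty$ within the compact family of $K$-qc maps, and reduce the bounded case to the unbounded one by an inversion about a point of $\gamma$ far from $w$, at the cost of enlarging $C(K)$ by a bounded factor.
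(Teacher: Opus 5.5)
For a curve through $\infty$ your argument is correct. It is the standard circular-distortion argument behind the cited estimate; the paper itself gives no proof, only the reference to Ahlfors. After conjugating by $A$, the map $\tilde j$ fixes $0$ and $\infty$, and the connected set $\tilde\gamma$ meets the unit circle at a fixed point $\zeta_0$. The distortion bound for the sense-preserving map $\overline{\tilde j}$ then gives $1/\lambda(K)\le|\tilde j(1)|\le\lambda(K)$, and the second item follows by the triangle inequality. For the reverse half of the third item you do not need $j\circ j=\Id$. If $w_0'\in\gamma$ is a point nearest to $j(w)$, the lower bound in the first item already gives $d(w,\gamma)\le|w-w_0'|\le C(K)\,|j(w)-w_0'|=C(K)\,d(j(w),\gamma)$.

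The gap is your final paragraph. The bounded case cannot be reduced to the unbounded one with a loss depending only on $K$, because for bounded $\gamma$ the lemma, read literally, is false. Take $\gamma=S^1$, $j(w)=1/\bar w$ (so $j_z\equiv 0$), $w_0=1$ and $w=\epsilon\to 0^+$. Then
\[
\frac{|w-w_0|}{|j(w)-w_0|}=\epsilon,\qquad \frac{|w-j(w)|}{2d(w,\gamma)}=\frac{1+\epsilon}{2\epsilon},\qquad \frac{d(w,\gamma)}{d(j(w),\gamma)}=\epsilon ,
\]
so all three estimates degenerate. The obstruction is general: $j$ sends some point of the bounded component to $\infty$. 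What your inversion $M(\zeta)=1/(\zeta-a)$ actually gives, with $a\in\gamma$ and $|a-w_0|\ge\frac12\operatorname{diam}\gamma$, is obtained by applying the normalized result to $MjM^{-1}$:
\[
\frac{|j(w)-w_0|}{|w-w_0|}\asymp_K\frac{|j(w)-a|}{|w-a|}.
\]
The right side is controlled by a constant depending only on $K$ when $|w-w_0|\le\epsilon(K)\,|a-w_0|$, since that is what keeps $j(w)$ near $w_0$ as well. It is not controlled in general. So you should do one of two things. Either state the hypothesis $\infty\in\gamma$ explicitly; this is Ahlfors's normalization, which the cited lemma tacitly assumes. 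Or prove the three estimates only for $d(w,\gamma)\le\epsilon(K)\operatorname{diam}\gamma$. The local version is the only regime relevant to this paper, which works with $w\to\gamma$.
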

Let $\rho_\Omega$ be the hyperbolic metric on $\Omega$. The quasihyperbolic metric $\sigma_\Omega$ on $\Omega$ is defined as $\frac{|dw|^2}{d(w,\gamma)^2}$. From the Schwarz lemma and the Koebe $1/4$ theorem, we obtain the classical bound
\begin{align*}
    \frac{1}{4}\sigma_\Omega(w) \leq \rho_\Omega(w) \leq 4\sigma_\Omega(w)
\end{align*}
for all $w\in \Omega$.

For an asymptotically conformal curve, by constructing a local quasireflection from the local quasiconformal extension in Theorem \ref{thm: local homeomorphism of AW extension}, we will show that all the constants involved converge to $1$ as $w$ converges to the boundary of $\Omega$. Furthermore, we will show the following estimate:
\begin{theorem}\label{thm: hyp and quasi hyp}
    Let $\gamma$ be an asymptotically conformal curve in $\C$ and let $\Omega$ be the bounded Jordan domain enclosed by $\gamma$. The hyperbolic metric and quasihyperbolic metric on $\Omega$ are asymptotically equal at the boundary of $\Omega$. Formally,
    \begin{align*}
        \lim_{w\to \partial \Omega} \frac{\rho_{\Omega}(w)}{\sigma_\Omega(w)} = 1.
    \end{align*}
\end{theorem}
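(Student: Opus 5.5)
The plan is a normal-families argument: rescale $f$ around a point $z_n$ near $\partial\Delta$ and show that the rescaled maps converge to a Möbius map onto a half-plane. Write $w=f(z)$. By conformal invariance of the hyperbolic metric,
\begin{align*}
\rho_\Omega(w)=\frac{4}{(1-|z|^2)^2|f'(z)|^2}|dw|^2,
\qquad\text{so}\qquad
\frac{\rho_\Omega(w)}{\sigma_\Omega(w)}=\left(\frac{d(w,\gamma)}{\tfrac12(1-|z|^2)|f'(z)|}\right)^2 .
\end{align*}
Hence it suffices to show $d(f(z),\gamma)\big/\bigl(\tfrac12(1-|z|^2)|f'(z)|\bigr)\to 1$ as $|z|\to1^-$. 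Since $f$ extends homeomorphically to $\overline\Delta$, $w\to\partial\Omega$ is the same as $|z|\to 1$. I will argue by contradiction along a sequence $z_n$ with $|z_n|\to1$, and pass to subsequences freely.

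\textbf{Step 1 (Koebe transforms).} Let $T_n(\zeta)=(\zeta+z_n)/(1+\bar z_n\zeta)$ and define
\begin{align*}
g_n(\zeta)=\frac{f(T_n\zeta)-f(z_n)}{(1-|z_n|^2)f'(z_n)} .
\end{align*}
Each $g_n$ is univalent on $\Delta$ with $g_n(0)=0$ and $g_n'(0)=1$. A direct computation gives
\begin{align*}
\tfrac12 g_n''(0)=\tfrac12(1-|z_n|^2)\,Nf(z_n)-\bar z_n .
\end{align*}
The normalized maps form a normal family, so after passing to a subsequence $g_n\to g$ locally uniformly, with $g$ univalent and $g(0)=0$, $g'(0)=1$, and $\bar z_n\to e^{-i\theta}$.

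\textbf{Step 2 (identify the limit).} The Schwarzian norm is invariant under precomposition by disk automorphisms, so $\|Sg_n(\zeta)\|_\Delta=\|Sf(T_n\zeta)\|_\Delta$. For $\zeta$ in a fixed compact set, $|T_n\zeta|\to1$ uniformly, so by Proposition \ref{prop: asymptotically conformal curve}(3) we get $Sg_n\to0$ locally uniformly. Hence $Sg\equiv0$ and $g$ is Möbius.

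By Proposition \ref{prop: asymptotically conformal curve}(2), the first term in the formula for $\tfrac12 g_n''(0)$ tends to $0$. Therefore $\tfrac12 g''(0)=-e^{-i\theta}$, which forces
\begin{align*}
g(\zeta)=\frac{\zeta}{1+e^{-i\theta}\zeta}.
\end{align*}
This $g$ sends $-e^{i\theta}$ to $\infty$ and $e^{i\theta}$ to $e^{i\theta}/2$. So $g(\Delta)$ is the half-plane $\{\mathrm{Re}(e^{-i\theta}\omega)<1/2\}$, and $d(0,\partial g(\Delta))=1/2$.

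\textbf{Step 3 (pass distances to the limit).} Since $d(f(z_n),\gamma)=(1-|z_n|^2)|f'(z_n)|\,d_n$ with $d_n=d(0,\partial g_n(\Delta))$, it remains to prove $d_n\to1/2$. This is the main obstacle, because locally uniform convergence alone does not control boundary pieces of $g_n(\Delta)$ that might approach $0$ from far away. I will use the Carathéodory kernel theorem: the domains $g_n(\Delta)$ converge to $g(\Delta)$ in the sense of kernels, and so does every subsequence.

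\emph{Lower bound.} Every compact subset of $g(\Delta)$, in particular a closed disk of radius $1/2-\epsilon$ about $0$, lies in $g_n(\Delta)$ for all large $n$. Hence $\liminf d_n\ge 1/2$.

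\emph{Upper bound.} Suppose instead that $d_n\ge 1/2+\epsilon$ along a subsequence. Then the disk of radius $1/2+\epsilon$ about $0$ lies in all these $g_n(\Delta)$. So the kernel of that subsequence contains the union of $g(\Delta)$ with this disk, which is strictly larger than $g(\Delta)$. This contradicts the uniqueness of the kernel limit, so $\limsup d_n\le1/2$.

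\textbf{Conclusion.} Combining the two bounds, $d_n\to 1/2$, so $d(f(z_n),\gamma)\big/\bigl(\tfrac12(1-|z_n|^2)|f'(z_n)|\bigr)=2d_n\to 1$ along every subsequence, contradicting the choice of $z_n$. By the displayed identity this gives $\rho_\Omega/\sigma_\Omega\to1$.

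An alternative route, closer to the paper's approach, uses the local quasireflection built from Theorem \ref{thm: local homeomorphism of AW extension} together with Ahlfors's estimates with constants tending to $1$. The compactness argument above avoids having to make those constants explicit.
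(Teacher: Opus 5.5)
Your proof is correct, and it takes a genuinely different route from the paper. The paper never rescales. It uses Becker's extension (Theorem \ref{thm: local homeomorphism of AW extension}) to define the local quasireflection $j(w)=F(1/\overline{F^{-1}(w)})$. It then factors $\rho_\Omega(w)d(w,\gamma)^2$ as the product of $\tfrac14|w-j(w)|^2\rho_\Omega(w)$, $4d(j(w),\gamma)^2/|w-j(w)|^2$ and $d(w,\gamma)^2/d(j(w),\gamma)^2$, and shows each factor tends to $1$ (Lemmas \ref{lem: metric and diff com for F}, \ref{lem: diff and dist com for F}, \ref{lem: comparison for F}). The first factor is handled by the explicit formula for $Mf(z,1/\bar z)$ together with condition (2). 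The third is handled by integrating $dj$ along the segment from $j(w)$ to its nearest point of $\gamma$, using $|j_{\bar z}|\to 1$ and $|j_z/j_{\bar z}|=|\mu_F|\to 0$. The second follows from the third via the triangle inequality. You replace all of this by classical compactness: Koebe transforms, a M\"{o}bius limit forced by the vanishing Schwarzian, and the kernel theorem. Your route is shorter and uses only (2) and (3) of Proposition \ref{prop: asymptotically conformal curve} plus properness of $f$. It needs neither the extension theorem nor any derivative estimates for $j$, and it makes the mechanism transparent: at scale $(1-|z|^2)|f'(z)|$ the domain looks like a half-plane whose edge is at distance $1/2$. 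What the paper's route buys is a set of Ahlfors-type estimates for the specific reflection $j$ with constants tending to $1$, which are exactly what gets reused for Theorem \ref{thm: EN converges} and Theorem \ref{thm: Eps converges}. Your rescaling does locate $j(f(z_n))$: in your coordinates it is $Mg_n(0,\infty)=-2/g_n''(0)\to e^{i\theta}$, the mirror image of $0$ in the limiting line. But controlling $d(j(w),\gamma)$ would also require the rescaled complements of $g_n(\Delta)$ to converge to the opposite half-plane. Kernel convergence of $g_n(\Delta)$ alone does not give this, so you would need a parallel argument for the exterior map.
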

This comparison will be further discussed in Section \ref{sec: Hyp and QuasiHyp}.

In \cite{Earle:qcreflection}, Earle and Nag study the Ahlfors--Weill construction for a general domain that admits a quasiconformal reflection $j$. Let $\Omega^* = \hat{\C}-\bar{\Omega}$ with hyperbolic metric $\rho_{\Omega^*}$. We define the quantity $EN(w)$ with respect to $j$ by
\[ EN(w) = |w-j(w)|^4\rho_\Omega(w) \rho_{\Omega^*}(j(w)) \]
for all $w\in \Omega$. In Lemma 3 of \cite{Earle:qcreflection}, they have shown the following estimate:
\begin{lemma} (See \cite{Earle:qcreflection})
    Let $j$ be a $K$-quasiconformal reflection interchanging the complementary Jordan domains $\Omega$ and $\Omega^*$. There is a constant $C(K)$, depending only on $K$, such that 
    \begin{align*}
        C(K)^{-1} \leq EN(w) \leq C(K)
    \end{align*}
    for all $w\in \Omega$.
\end{lemma}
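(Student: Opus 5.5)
The plan is to reduce $EN(w)$ to the three estimates of Ahlfors's lemma for quasireflections, together with the comparison $\frac14\sigma \le \rho \le 4\sigma$ between the hyperbolic and quasihyperbolic metrics.

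\textbf{Step 1 (Möbius invariance).} I would first check that $EN$ is invariant under Möbius transformations $T$. That is, if $j$ is replaced by $T\circ j\circ T^{-1}$ and $\Omega,\Omega^*$ by their images, then $EN$ is unchanged. This follows from the identity $|T(w)-T(w')|^2=|T'(w)||T'(w')||w-w'|^2$ and from the conformal naturality $\rho_{T(\Omega)}(T(w))|T'(w)|^2=\rho_\Omega(w)$. The conjugated reflection is still $K$-quasiconformal, so the constant of Ahlfors's lemma is unchanged. Hence, for a fixed $w$, I may normalize (for instance by sending a point of $\Omega^*$ far from $j(w)$ to $\infty$) so that $w$ and $j(w)$ are finite and both Euclidean distances to $\gamma$ make sense.

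\textbf{Step 2 (replace the metrics by quasihyperbolic ones).} Write $d(\cdot)=d(\cdot,\gamma)$. The bound $\frac14\sigma_\Omega\le\rho_\Omega\le4\sigma_\Omega$ is available for $\Omega$, and the same Schwarz--Koebe argument gives it for $\Omega^*$. Using it, I get
\[
\frac{1}{16}\,\frac{|w-j(w)|^4}{d(w)^2\,d(j(w))^2}\le EN(w)\le 16\,\frac{|w-j(w)|^4}{d(w)^2\,d(j(w))^2}.
\]

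\textbf{Step 3 (Ahlfors's estimates).} For the upper bound, the second estimate gives $|w-j(w)|\le 2C(K)d(w)$, and the third gives $d(j(w))\ge d(w)/C(K)$. Together these bound the quotient in Step 2 by $16\,C(K)^6$. For the lower bound, the segment from $w\in\Omega$ to $j(w)\in\Omega^*$ must meet $\gamma$. Therefore $|w-j(w)|\ge \max(d(w),d(j(w)))$, so the quotient is at least $1$ and $EN(w)\ge 1/16$. Enlarging $C(K)$ then gives $C(K)^{-1}\le EN(w)\le C(K)$.

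\textbf{Main obstacle.} I expect the delicate point to be the Koebe lower bound $\rho_{\Omega^*}\ge\frac14\sigma_{\Omega^*}$ when $\Omega^*$ contains $\infty$. Koebe's theorem is usually stated for simply connected domains in $\C$. I would handle this through the normalization in Step 1, choosing the Möbius map so that the point sent to $\infty$ lies on $\gamma$ (or so that $\infty$ is irrelevant near $j(w)$); then both domains are simply connected subdomains of $\C$ and Koebe applies directly. I would also check that Ahlfors's lemma still holds in this normalization with the same $C(K)$; this should follow because its proof is Möbius-invariant up to constants depending only on $K$.
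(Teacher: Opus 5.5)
The paper does not prove this lemma; it is quoted from Earle--Nag. Your argument is correct and is the natural one. It parallels the paper's own asymptotic arguments in Sections~\ref{sec: Hyp and QuasiHyp}--\ref{sec: EN est}, with the constants of Koebe and of Ahlfors in place of exact limits.

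One justification must be replaced. The normalization has to be $\infty\in\gamma$. This is forced not only by Koebe on $\Omega^*$ but by Ahlfors's estimates themselves, which are \emph{not} M\"obius invariant; the paper's statement of them suppresses this hypothesis. For example, take $\gamma=S^1$ and $j(w)=1/\bar w$, which is $1$-quasiconformal. As $w\to 0$, both $|w-j(w)|/d(w,\gamma)$ and $d(j(w),\gamma)/d(w,\gamma)$ tend to $\infty$.

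The estimates hold precisely when $\infty\in\gamma$. Then $j(\infty)=\infty$, so $z\mapsto\overline{j(z)}$ is a $K$-quasiconformal self-map of $\C$, hence quasisymmetric with a constant $\lambda(K)$. Given $w$ and $w_0\in\gamma$, pick $\zeta\in\gamma$ with $|\zeta-w_0|=|w-w_0|$; this is possible because $\gamma$ runs from $w_0$ to $\infty$. Then $|j(w)-w_0|\le\lambda(K)|j(\zeta)-w_0|=\lambda(K)|w-w_0|$. The same argument for $j^{-1}$ gives the reverse inequality, and choosing $w_0$ nearest to $w$ or to $j(w)$ gives the other two estimates.

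So discard the alternative normalizations in Step 1 and in your last paragraph. With a point of $\Omega^*$ at $\infty$, neither the Koebe lower bound on $\Omega^*$ nor Ahlfors's estimates hold with uniform constants. Also replace the sentence about a ``M\"obius-invariant proof'' by the argument above. Steps 2 and 3 then go through as written, including the segment argument for the lower bound.

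As a side remark, your lower bound $1/16$ is far from sharp. Proposition~\ref{prop: general dist btw horoball} rests on the disjointness of the two horoballs, via Lemma~\ref{lem: Grunsky}. It gives $EN(w)=16\,e^{2D}\ge 16$, where $D$ is the hyperbolic distance between the horoballs based at $w$ and $j(w)$. This holds with any point of $\Omega^*$ in place of $j(w)$ and does not depend on $K$. Only the upper bound genuinely uses the quasiconformality of $j$.
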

When $\Omega$ is a Euclidean disk in $\C$ and $j$ is the antiholomorphic reflection (i.e $j(w)=1/\bar{w}$ for $\Omega = \Delta$), the function $EN(w) \equiv 16$. 

In this paper, we can define a local quasireflection $j(w) = F\left(\frac{1}{\overline{F^{-1}(w)}}\right)$ and a similar term $EN(w)$ for an asymptotically conformal curve via Theorem \ref{thm: local homeomorphism of AW extension} and show the following theorem:
\begin{theorem} \label{thm: EN converges}
    For any asymptotically conformal curve $\gamma$, the quantity $EN(w)$ with respect to $j$ converges to $16$ as $w$ converges to the boundary of $\Omega$.
\end{theorem}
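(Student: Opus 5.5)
We pull everything back to the disk with the extension $F$ of Theorem \ref{thm: local homeomorphism of AW extension}. Write $w=f(z)$ with $z\in\Delta$ close to the unit circle, and set $z^*=1/\bar z$. Then $j(w)=F(z^*)=Mf(z,z^*)$. The quantity to control is
\[
EN(w)=|f(z)-Mf(z,z^*)|^4\,\rho_\Omega(f(z))\,\rho_{\Omega^*}(F(z^*)).
\]

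\textbf{Step 1: the two explicit factors.} By conformal invariance, $\rho_\Omega(f(z))=\frac{4}{(1-|z|^2)^2|f'(z)|^2}$. The osculating Möbius map can be written out explicitly:
\[
Mf(z,\zeta)=f(z)+\frac{f'(z)(\zeta-z)}{1-\tfrac12 Nf(z)(\zeta-z)}.
\]
Put $\zeta=z^*$ and note $|z^*-z|=\frac{1-|z|^2}{|z|}$. This gives
\[
|f(z)-j(w)|^4=\frac{|f'(z)|^4(1-|z|^2)^4}{|z|^4\,|1-\tfrac12 Nf(z)(z^*-z)|^4}.
\]
By Proposition \ref{prop: asymptotically conformal curve}(2), $Nf(z)(1-|z|^2)\to0$, so the denominator tends to $1$. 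Consequently $|w-j(w)|^4\rho_\Omega(w)\sim 4|f'(z)|^2(1-|z|^2)^2$.

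\textbf{Step 2: the hyperbolic density of $\Omega^*$ at $j(w)$.} This is the main obstacle, since $\Omega^*$ is not the image of $\Delta^*$ under $F$ in any conformal way, so we cannot simply pull back. We need
\[
\rho_{\Omega^*}(j(w))\sim \frac{4}{|f'(z)|^2(1-|z|^2)^2}.
\]
We would obtain this by going through the quasihyperbolic metric. The first ingredient is the analogue of Theorem \ref{thm: hyp and quasi hyp} for the exterior domain $\Omega^*$, which also has the asymptotically conformal boundary $\gamma$; if needed, move to a bounded model by a Möbius map sending a point of $\Omega$ to $\infty$, which preserves asymptotic conformality. This gives $\rho_{\Omega^*}(j(w))\sim 1/d(j(w),\gamma)^2$. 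Likewise $\rho_\Omega(w)\sim 1/d(w,\gamma)^2$.

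The second ingredient is the third of Ahlfors's estimates in its sharpened local form for our quasireflection: $d(w,\gamma)/d(j(w),\gamma)\to1$. This should come from $\mu_F\to0$, as in Proposition \ref{prop: asymptotically conformal curve}(4), together with the local quasireflection bounds with constants tending to $1$ that the paper establishes. Combining the two ingredients yields $\rho_{\Omega^*}(j(w))\sim\rho_\Omega(w)=\frac{4}{(1-|z|^2)^2|f'(z)|^2}$.

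To make the distance ratio rigorous directly, we would rescale. Let $g_z(\zeta)=\big(f(z+(1-|z|^2)\zeta)-f(z)\big)/\big(f'(z)(1-|z|^2)\big)$, defined through $F$. Since $\|Sf\|\to0$ near the boundary and $\mu_F\to0$, a normal-families argument shows that $g_z$ converges locally uniformly to a Möbius map, indeed to the identity after the nonlinearity correction. Under this rescaling $\gamma$ becomes nearly a circle or line near $w$, and both distances are asymptotic to $\frac{|f'(z)|(1-|z|^2)}{2}$. We expect this compactness step, which needs uniform control of $F$ on the annulus $A_{1,1/r}$ from Theorem \ref{thm: local homeomorphism of AW extension}, to be the delicate part.

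\textbf{Step 3: conclusion.} Multiplying the results of Steps 1 and 2 gives
\[
EN(w)\sim 4|f'(z)|^2(1-|z|^2)^2\cdot\frac{4}{|f'(z)|^2(1-|z|^2)^2}=16.
\]
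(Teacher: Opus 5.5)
Your argument is correct but decomposes $EN$ differently from the paper. The paper inserts the exterior quasireflection $j^*$, built from the Ahlfors--Weill extension $G$ of the exterior map $g$, and writes $EN(F(z))/16$ as a product of three factors:
\begin{itemize}
\item $|w-j(w)|^2\rho_\Omega(w)/4$, handled by Lemma \ref{lem: metric and diff com for F};
\item $|j^*(j(w))-j(w)|^2\rho_{\Omega^*}(j(w))/4$, handled by Lemma \ref{lem: metric and diff com for G};
\item $|w-j(w)|^2/|j^*(j(w))-j(w)|^2$, handled by comparing numerator and denominator with $2d(\gamma,j(w))$ via Lemmas \ref{lem: diff and dist com for F} and \ref{lem: diff and dist com for G}.
\end{itemize}
You instead write
\[
\frac{EN(w)}{16}=\Bigl(\tfrac14|w-j(w)|^2\rho_\Omega(w)\Bigr)^2\cdot\frac{\rho_{\Omega^*}(j(w))}{\rho_\Omega(w)}.
\]
You then show the last ratio tends to $1$ by combining three facts: the hyperbolic/quasihyperbolic comparison on $\Omega$ (Theorem \ref{thm: hyp and quasi hyp}), the same comparison on $\Omega^*$, and $d(w,\gamma)/d(j(w),\gamma)\to1$.

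That last fact is exactly Lemma \ref{lem: comparison for F}, already proved before this theorem. So your rescaling and normal-families paragraph, which you flag as delicate, is not needed.

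The underlying estimates are the same, but your packaging buys two things. It isolates the conceptual point that $j$ asymptotically matches the two hyperbolic densities at corresponding points. And it lets you obtain the exterior comparison by transporting Theorem \ref{thm: hyp and quasi hyp} through a M\"{o}bius map. That avoids redoing the $G$-versions of the lemmas, which the paper only asserts by analogy. The paper's three-factor split, by contrast, keeps the $F$ and $G$ sides symmetric and never uses the hyperbolic/quasihyperbolic theorem as a black box.

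For the M\"{o}bius transfer, supply one routine detail. If $M$ is M\"{o}bius with its pole in $\Omega$, then $M$ is holomorphic with nonvanishing derivative near $\gamma$, so
\[
d\bigl(M(w),M(\gamma)\bigr)=|M'(w)|\,d(w,\gamma)\bigl(1+o(1)\bigr)\quad\text{as } w\to\gamma .
\]
Together with $\rho_{M(\Omega^*)}(M(w))|M'(w)|^2=\rho_{\Omega^*}(w)$, this carries the limit $\rho\,d^2\to1$ from $M(\Omega^*)$ back to $\Omega^*$.
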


In \cite{epstein-envelopes}, Epstein constructed immersed surfaces in $\HH^3$ corresponding to conformal metric on a domain at $\hat{\C}$; the construction and the properties of Epstein surface we need are recalled in \ref{sec: Ep}. This construct relates the interior $\HH^3$ with the conformal boundary $\hat{\C}$. An important class of metric would be the Poincar\'{e} metric. Epstein showed that when the Schwarzian norm of the uniformization map is less than $1/2$, the surfaces obtained from the Epstein--Poincar\'{e} surface by the normal flow foliate $\HH^3$. This leads to a geometric proof of Ahlfors--Weill theorem. 

In this paper, we will study Epstein surfaces for asymptotically conformal curve. Similar to the geometric proof of Ahlfors--Weill theorem, in Section \ref{sec: AW}, we will give a geometric proof of Becker's theorem (Theorem \ref{thm: AW thm}) using Epstein surfaces. 

In Section \ref{sec: dist btw Ep}, we will return to the quantity $EN(w)$ with respect to a general quasireflection $j$. We first show that $EN(w)$ is related to the hyperbolic distance between the horoballs based at $w$ and $j(w)$ determined by the hyperbolic metric on $\Omega$ and $\Omega^*$, respectively. For asymptotically conformal curve, using this interpretation and the local quasireflection above, we will establish the following result:
\begin{theorem} \label{thm: Eps converges}
    For any asymptotically conformal Jordan curve $\gamma$,
    \begin{align*}
        \lim_{w\to \partial \Omega}d_{\HH^3}(\Ep_{\Omega}(w),\Ep_{\Omega^*}(\Omega^*)) = 0.
    \end{align*}
\end{theorem}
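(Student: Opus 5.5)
The plan is to reduce the statement to Theorem \ref{thm: EN converges} by interpreting $EN(w)$ geometrically through horoballs, and then to pass from horoballs to Epstein--Poincar\'{e} surfaces. I read the statement as the distance from $\Ep_\Omega(w)$ to the surface $\Ep_{\Omega^*}(\Omega^*)$. It suffices to bound $d_{\HH^3}(\Ep_\Omega(w),\Ep_{\Omega^*}(j(w)))$ for the local quasireflection $j(w)=F(1/\overline{F^{-1}(w)})$, since $j(w)\in\Omega^*$ for $w$ near $\gamma$.

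\textbf{Step 1: horoball distance.} Recall that $\Ep_\Omega(w)$ lies on the boundary of the horoball $H_\Omega(w)$ based at $w$ whose Euclidean size is determined by $\rho_\Omega(w)$. In the upper half-space model, a horosphere at $w\in\C$ for the conformal metric $e^{2\varphi}|dz|^2$ is the sphere tangent to $\C$ at $w$ with Euclidean diameter $e^{-\varphi(w)}$, up to the normalization $\rho=4|dz|^2/(\ldots)$. For disjoint horoballs based at $w_1,w_2$ with Euclidean diameters $h_1,h_2$, the hyperbolic distance between them is $\log\frac{|w_1-w_2|^2}{h_1h_2}$. Take $h_1=c\,\rho_\Omega(w)^{-1/2}$ and $h_2=c\,\rho_{\Omega^*}(j(w))^{-1/2}$, with the constant $c$ fixed by the disk case. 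This gives
\[
d\bigl(H_\Omega(w),H_{\Omega^*}(j(w))\bigr)=\tfrac12\log\frac{EN(w)}{16}.
\]
The constant is calibrated so that for $\Omega=\Delta$ and $j(w)=1/\bar w$ the two Epstein surfaces coincide with the same totally geodesic plane, and the horoballs are tangent there. By Theorem \ref{thm: EN converges}, this horoball distance tends to $0$.

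\textbf{Step 2: from horoballs to points.} Horoball distance alone does not control the distance between the points $\Ep_\Omega(w)$ and $\Ep_{\Omega^*}(j(w))$, because tangency points could drift along the horospheres. I would therefore also locate the points on the horospheres. The point $\Ep_\Omega(w)$ is the point of $\partial H_\Omega(w)$ whose geodesic normal ends at the "other" endpoint determined by $\nabla\log\rho_\Omega$. Pulled back by $f$, this is the osculating M\"{o}bius data: $\Ep_\Omega(f(z))$ is the image under $Mf(z,\cdot)$ of the point of $\HH^3$ above $z$ on the hemisphere over $\partial\Delta$ orthogonal to the radius. So $\Ep_\Omega(w)=Mf(z,\cdot)(p_z)$, where $w=f(z)$ and $p_z$ is the Epstein point of $\Delta$.

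The analogous description for $\Omega^*$ requires the exterior uniformization $g:\Delta^*\to\Omega^*$. The key is to compare $Mg(1/\bar z,\cdot)$ with $Mf(z,\cdot)$. As $|z|\to1$, $\|Sf(z)\|_\Delta\to0$ and $\|Sg\|\to0$ near the circle. Using the local quasiconformal extension $F$ of Theorem \ref{thm: local homeomorphism of AW extension}, together with the fact that $j(w)=Mf(z,1/\bar z)$, I would show that $Mf(z,\cdot)^{-1}\circ Mg(\zeta,\cdot)$, with $\zeta$ the preimage of $j(w)$ under $g$, converges to an element of $\PSL_2(\C)$ preserving $\Delta$ and sending $\zeta$'s rescaled position to $1/\bar z$. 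Concretely, it converges to a M\"{o}bius map close to the identity in the frame normalized at $p_z$.

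\textbf{Main obstacle.} The obstacle is this comparison of osculating M\"{o}bius maps on the two sides. I would attack it by normalizing with $Mf(z,\cdot)^{-1}$, so that $w\mapsto 0$-like coordinates are in place. I would then use the modified Ahlfors estimates, whose constants tend to $1$, to show that the normalized curve $Mf(z,\cdot)^{-1}(\gamma)$ converges locally uniformly to the unit circle on the relevant scale. This follows from $\mu_F\to0$ near $\partial\Delta$ and compactness of normalized quasiconformal maps. By hyperbolic-metric continuity under Carath\'{e}odory kernel convergence, both $\rho$ and $\nabla\log\rho$ for the normalized $\Omega$ and $\Omega^*$ at the normalized points then converge to those of $\Delta$ and $\Delta^*$ at reflected points. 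Since $\Ep$ depends continuously on the $1$-jet of the metric, both normalized Epstein points converge to the same point $p$ on the geodesic plane over $\partial\Delta$. Isometry invariance then gives distance $\to0$.
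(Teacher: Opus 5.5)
Your argument is correct, but it takes a different route from the paper.

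\textbf{The paper's route.} The paper never looks at the specific point $\Ep_{\Omega^*}(j(w))$. By Lemma \ref{lem: naturality of osc}, the normal geodesic at $\Ep_\Omega(w)$ joins $w$ to $j(w)$. The paper normalizes $w=0$, $j(w)=\infty$, $H_{\rho_{\Omega^*}}(j(w))=\{\xi=1\}$, so that $\Ep_\Omega(w)$ is the top of the horoball at $0$, at height $c=\Psi(w)^{-1}$. This is where your Step 1, i.e.\ Proposition \ref{prop: dist btw horoball}, is genuinely used. It then shows that $\Ep_{\Omega^*}(\Omega^*)$ crosses this vertical geodesic at a height in $[c,1]$. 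The upper bound comes from Lemma \ref{lem: decreasing Eps}: small principal curvatures force paths on the $\Omega^*$ surface that leave the horizontal point $\Ep_{\Omega^*}(j(w))$ to go down. The lower bound comes from disjointness of the horoballs (Lemma \ref{lem: Grunsky}). The result is the explicit estimate $d(\Ep_\Omega(w),\Ep_{\Omega^*}(\Omega^*))\le \frac12\log(EN(w)/16)$ near $\gamma$, and Theorem \ref{thm: EN converges} finishes.

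\textbf{Your route.} You use a blow-up and compactness argument. Normalize by $A_z=m_z^{-1}\circ Mf(z,\cdot)^{-1}$ with $m_z(\zeta)=(\zeta+z)/(1+\bar z\zeta)$. Then $\Ep_\Omega(w)$ goes exactly to $\Ep_\Delta(0)$ and $j(w)=Mf(z,1/\bar z)$ goes to $\infty$. Since $\mu_F\to 0$, the maps $\Phi_z=A_z\circ F\circ m_z$ converge to the identity. This gives kernel convergence of $A_z(\Omega^*)$ to $\Delta^*$ at $\infty$, hence convergence of the $1$-jet of the hyperbolic metric and of the Epstein point.

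\textbf{Comparison.} Your route avoids Grunsky and the curvature lemmas, and makes Step 1 superfluous; it even reproves $EN\to 16$. It also gives the stronger statement $d(\Ep_\Omega(w),\Ep_{\Omega^*}(j(w)))\to 0$. What you lose is any rate. The paper's argument gives a quantitative bound in terms of $EN$, which fits with the horoball interpretation and the global bounds of Section \ref{sec: global bound}.

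\textbf{Points to handle carefully in a write-up.}
\begin{enumerate}
\item Let $z\to p\in S^1$. The map $\Phi_z$ is only defined off $m_z^{-1}(\{|u|\ge 1/r\})$, a disk in $\Delta^*$ shrinking to the boundary point $-p$. Its dilatation $|\mu_F\circ m_z|$ tends to $0$ only locally uniformly on $\hat{\C}\setminus\{-p\}$. So compactness gives $\Phi_z\to\mathrm{id}$ there, not uniformly along the whole normalized curve: the part of $\gamma$ far from $w$ piles up at $-p$.
\item Because of this, derive kernel convergence from the two-dimensional convergence of $\Phi_z$ on compacta of $\Delta^*$ and of $\Delta$. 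Combine it with injectivity of $\Phi_z$ and with $F(A_{1,1/r})\subset\Omega^*$, rather than from convergence of the curve alone.
\end{enumerate}
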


In Section \ref{sec: global bound}, we return to the case $\|Sf\|_\Delta < 1/2$ without assuming that the Jordan curve is asymptotically conformal. In this setting, by Theorem \ref{thm: AW thm}, the Ahlfors--Weill construction gives a quasiconformal extension of $f$ to the whole Riemann sphere. Therefore, $j(w) = F^{-1}\left(\frac{1}{\overline{F(w)}}\right)$ is a quasireflection on the Riemann sphere that fixes the Jordan curve pointwise. The main result of Section \ref{sec: global bound} is the following:
\begin{theorem} \label{thm: global bound on EN}
    For a Jordan domain $\Omega$ with a uniformization map $f:\Delta \to \Omega$ satisfying $\|Sf\|_\Delta<1/2$, the function EN(w) satisfies
    \begin{align*}
        EN(w) = 4|w-j(w)|^2\rho_{\Omega^*}(j(w))|j_{\bar{z}}(w)| \leq \frac{16}{1-4\|Sf\|_\Delta^2}
    \end{align*}
    for all $w\in \Omega$.
\end{theorem}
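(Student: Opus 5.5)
We argue in the coordinates of the Ahlfors--Weill construction. Fix $w=f(z)$ with $z\in\Delta$, put $\zeta=1/\bar z$, and read the reflection as $j=F\circ R\circ f^{-1}$ on $\Omega$, where $R(z)=1/\bar z$. Thus $j(w)=F(\zeta)=Mf(z,\zeta)$. Write $Nf=f''/f'$ and
\[
D=\bar z-\tfrac12 Nf(z)(1-|z|^2),
\]
and use the explicit form $Mf(z,\xi)=f(z)+\dfrac{f'(z)(\xi-z)}{1-\frac12 Nf(z)(\xi-z)}$.

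\textbf{Step 1 (the identity).} Substituting $\xi-z=(1-|z|^2)/\bar z$ gives $|w-j(w)|=|f'(z)|(1-|z|^2)/|D|$. Since $\zeta\mapsto 1/\bar\zeta$ is antiholomorphic, $F_\zeta(\zeta)=\partial_\xi Mf(z,\xi)|_{\xi=\zeta}$. The chain rule through $\bar z=\overline{f^{-1}(w)}$ then yields $|j_{\bar w}(w)|=1/|D|^2$. Combining these with $\rho_\Omega(w)=4/\big((1-|z|^2)^2|f'(z)|^2\big)$ gives $\rho_\Omega(w)|w-j(w)|^2=4|j_{\bar w}(w)|$. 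This is exactly the claimed identity $EN(w)=4|w-j(w)|^2\rho_{\Omega^*}(j(w))|j_{\bar w}(w)|$.

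\textbf{Step 2 (reduction to a growth estimate).} By Step 1, it suffices to show
\[
\rho_{\Omega^*}(j(w))\le \frac{4|D|^4}{(1-|z|^2)^2|f'(z)|^2(1-4\|Sf\|_\Delta^2)}.
\]
Pull back by the Möbius map $M=Mf(z,\cdot)$. Set $g=M^{-1}\circ f$, so that $Sg=Sf$ and $g$ agrees with the identity to second order at $z$. Let $\Omega'=M^{-1}(\Omega^*)=\hat{\C}-\overline{g(\Delta)}$. Using $|M'(\zeta)|=|f'(z)||z|^2/|D|^2$, the desired bound becomes
\[
\rho_{\Omega'}(\zeta)\le \frac{\rho_{\Delta^*}(\zeta)}{1-4\|Sf\|_\Delta^2}.
\]
Now precompose with a disk automorphism sending $z$ to $0$; this sends $\zeta$ to $\infty$. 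After renormalizing by an affine map (which also acts on $\Omega'$ and $\Delta^*$), we may assume $g(0)=0$, $g'(0)=1$, $g''(0)=0$ and $\|Sg\|_\Delta=s:=\|Sf\|_\Delta$. By monotonicity of the hyperbolic metric under inclusion, it then suffices to prove the growth bound
\[
g(\Delta)\subset\{|u|<(1-4s^2)^{-1/2}\}.
\]
Indeed, then $\Omega'\supset\{|u|>R\}$ with $R^2=1/(1-4s^2)$, and the metric of $\{|u|>R\}$ at $\infty$ equals $R^2$ times that of $\Delta^*$.

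\textbf{Step 3 (the growth estimate; main obstacle).} Write $g=u_1/u_2$, where $u_i$ are solutions of $u''+\frac12 Sg\,u=0$ with $u_1(0)=0$, $u_1'(0)=1$, $u_2(0)=1$, $u_2'(0)=0$; these initial conditions encode $g''(0)=0$. We have $|\tfrac12 Sg(z)|\le 2s/(1-|z|^2)^2$. Along each radius, use a Sturm/Nehari comparison against the real equation $v''\pm\frac{2s}{(1-r^2)^2}v=0$. Its solutions are explicit in terms of $\big(\frac{1+r}{1-r}\big)^{a_\pm/2}$ with $a_\pm=\sqrt{1\pm 2s}$.

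From this comparison I would extract the two bounds $|u_1(r)|\le$ (majorant for the $+$ sign) and $|u_2(r)|\ge$ (minorant for the $-$ sign). Their ratio should tend, as $r\to1$, to a quantity of size $1/\sqrt{a_+a_-}$-type, i.e.\ $(1-4s^2)^{-1/2}$. Sharpness can be checked against the extremal $g$ with $Sg$ a real multiple of $2/(1-z^2)^2$.

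I expect this last step to be the real difficulty. The comparison must be done for complex-valued solutions, and the lower bound on $|u_2|$ requires controlling $u_2$ away from zero. Nonvanishing of $u_2$ follows from univalence and $\|Sg\|_\Delta<1/2$, but the quantitative form needs care. If the pointwise ODE comparison proves too lossy, my fallback is the Epstein-surface picture. There $EN(w)$ is $16$ times an exponential of the signed distance between the horoballs of $\rho_\Omega$ at $w$ and $\rho_{\Omega^*}$ at $j(w)$. I would then bound that distance using the principal curvature bounds of $\Ep_\Omega$ in terms of $\|Sf\|_\Delta$, namely the eigenvalues of $\pm 2Sf/\rho_\Delta$, integrated along the normal flow that limits to $F$.
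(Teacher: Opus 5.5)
Your Step 1 is correct and is exactly the paper's computation of $j_{\bar w}$ on $\Omega$, giving $\rho_\Omega(w)|w-j(w)|^2=4|j_{\bar w}(w)|$. Your reduction in Step 2 to $\rho_{\Omega'}(\zeta)\le \rho_{\Delta^*}(\zeta)/(1-4s^2)$ is also correct, with one small repair. After precomposing with the disk automorphism $m$, $m(0)=z$, you must post-compose with $m^{-1}$ (i.e.\ conjugate $g$ by $m$). An affine map cannot restore $g''(0)=0$, since $N(A\circ h)=Nh$ and $N(g\circ m)(0)=-2\bar z$; it also does not preserve $\Delta^*$.

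The genuine gap is Step 3: the growth bound $g(\Delta)\subset\{|u|<(1-4s^2)^{-1/2}\}$ is false. Take the extremal you suggest, $g(z)=\alpha^{-1}\tanh(\alpha\,\mathrm{artanh}\,z)$ with $\alpha=\sqrt{1-2s}$.
\begin{itemize}
\item It is odd, so $g(0)=0$, $g'(0)=1$, $g''(0)=0$.
\item $Sg(z)=2(1-\alpha^2)/(1-z^2)^2=4s/(1-z^2)^2$, so $\|Sg\|_\Delta=s$.
\item Yet $g(r)\to 1/\alpha=(1-2s)^{-1/2}>(1-4s^2)^{-1/2}$ as $r\to 1^-$.
\end{itemize}
Pointwise growth deviates from $1$ at first order in $s$, while the constant in the theorem deviates only at second order. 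So no refinement of the radial Sturm comparison can prove your target. The comparison as you set it up does not even give boundedness. With $t=\mathrm{artanh}\,r$, the natural majorant $a_+^{-1}\sqrt{1-r^2}\sinh(a_+t)$ for $|u_1|$, divided by the minorant $\sqrt{1-r^2}\cosh(a_-t)$ for $|u_2|$, behaves like $e^{(a_+-a_-)t}/a_+\to\infty$. It does not tend to a $1/\sqrt{a_+a_-}$-type constant.

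What your reduction actually needs is a bound on the conformal radius of $\Omega'$ at $\infty$, not on $\sup|g|$. In the coordinate $1/u$ one has $\rho_{\Omega'}(\infty)=4\,\mathrm{cap}(\overline{g(\Delta)})^2$, so the required inequality is $\mathrm{cap}(\overline{g(\Delta)})\le (1-4s^2)^{-1/2}$. For the example above, $g(\Delta)$ is the lens with vertices $\pm 1/\alpha$ and angle $\alpha\pi$. Its capacity is $1/(\alpha(2-\alpha))$, which does satisfy the bound because $(2-\alpha)^2-(2-\alpha^2)=2(1-\alpha)^2\ge 0$. So the theorem is consistent, but replacing capacity by the circumscribed radius discards exactly the information needed. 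A capacity bound is a global quantity that a radius-by-radius comparison does not control. Your Epstein-surface fallback is too undeveloped to supply it.

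The paper gets this global input from a curvature argument instead.
\begin{itemize}
\item It equips $\Omega^*$ with $\rho=|w-j(w)|^{-4}\rho_\Omega(j(w))^{-1}|dw|^2$. This metric is complete because $j^*\rho$ is comparable to $\rho_\Omega$ with constants $(1\pm 2\|Sf\|_\Delta)^2$.
\item It uses the Epstein/Ahlfors--Weill identity $j(w)=w+2/\overline{(\log\rho_\Omega)_{\bar w}}$ together with the $\Omega^*$-formula for $j_{\bar w}$. From these it computes $K_\rho=-16/(1-4\|Sf(\zeta)\|_\Delta^2)\ge -16/(1-4s^2)$.
\item Yau's Schwarz lemma for the identity $(\Omega^*,\rho)\to(\Omega^*,\rho_{\Omega^*})$ then gives $\rho_{\Omega^*}\le \frac{16}{1-4s^2}\rho$, which is precisely $EN\le 16/(1-4s^2)$.
\end{itemize}
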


\section{Hyperbolic and Quasihyperbolic metrics} \label{sec: Hyp and QuasiHyp}
In this section, we will prove Theorem \ref{thm: hyp and quasi hyp}. Throughout this section, we will assume that $\gamma$ is an asymptotically conformal curve. Let $f:\Delta \to \Omega$ be a biholomorphic map. Recall that the osculating M\"{o}bius transformation of $f$ at $z$ is the unique fractional linear transformation $Mf(z,\cdot)$ satisfies $Mf(z,z) = f(z)$, $Mf'(z,z) = f'(z)$ and $Mf''(z,z) = f''(z)$. By an explicit computation, 
\[ Mf(z,p) = f(z) + \frac{f'(z)(p-z)}{1-Nf(z)\frac{p-z}{2}}\]
where $Nf(z) = \frac{f''(z)}{f'(z)}$. 

By Theorem \ref{thm: local homeomorphism of AW extension}, there exists $r<1$ such that $F:A_{r,1/r} \to \C$ is a homeomorphism onto its image. Let $j(w) = F\left(\frac{1}{\overline{F^{-1}(w)}}\right)$ for all $w\in F(A_{r,1/r})$. Note that $j^2 = \Id$ and $j|_\gamma = \Id$. Throughout the rest of the paper, for any open set $U\subset \C$ with bounded boundary $\partial U$, $z\to \partial U$ represents any sequence $z_n \in U$ with $d(z_n,\partial U) \to 0$ where $d$ the Euclidean distance. The next few lemmas are similar in nature to the estimate in \cite{ahlforsQuasiRef}.

\begin{lemma} \label{lem: metric and diff com for F}
    $\lim_{z\to \partial \Delta}\frac{|F(z)-j(F(z))|(\rho_\Omega(F(z)))^{1/2}}{2} = 1$.
\end{lemma}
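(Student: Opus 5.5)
Since $z\to\partial\Delta$ along sequences in the domain $A_{r,1/r}$ of $F$, we split into two cases: $z\in\Delta$ (so $F(z)=f(z)\in\Omega$) and $z\in A_{1,1/r}$. The two cases are symmetric, and in both we compute the quantity directly. For $z\in\Delta$, set $w=f(z)$. Then $j(w)=F(1/\bar z)=Mf(z,1/\bar z)$, and since $Mf(z,z)=f(z)$, the explicit formula for the osculating M\"obius transformation gives, with $p=1/\bar z$,
\begin{align*}
    w-j(w) = -\frac{f'(z)(p-z)}{1-Nf(z)\frac{p-z}{2}}, \qquad p-z=\frac{1-|z|^2}{\bar z}.
\end{align*}
For the hyperbolic metric we use conformal invariance: $\rho_\Omega(f(z))^{1/2}=\frac{2}{(1-|z|^2)|f'(z)|}$. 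Multiplying, we obtain
\begin{align*}
    \frac{|w-j(w)|\rho_\Omega(w)^{1/2}}{2} = \frac{1}{|z|\,\bigl|1-\tfrac{1}{2}Nf(z)\tfrac{1-|z|^2}{\bar z}\bigr|}.
\end{align*}
As $|z|\to1^-$ we have $|z|\to1$, and by Proposition \ref{prop: asymptotically conformal curve}(2), $Nf(z)(1-|z|^2)\to0$. Hence the expression tends to $1$.

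For $z\in A_{1,1/r}$, let $\zeta=1/\bar z\in\Delta$. Then $F(z)=Mf(\zeta,z)$ and $j(F(z))=F(\zeta)=f(\zeta)$, so $|F(z)-j(F(z))|$ is given by the same formula with $\zeta$ in place of $z$; that is, it equals $|f'(\zeta)|(1-|\zeta|^2)/\bigl(|\zeta|\,|1-\tfrac12 Nf(\zeta)(1-|\zeta|^2)/\bar\zeta|\bigr)$. The difficulty in this case is that $F(z)$ lies in $\Omega^*$, not $\Omega$, so $\rho_\Omega(F(z))$ is not defined literally. We interpret the quantity in the natural symmetric way, with $\rho_\Omega$ replaced by the hyperbolic metric of the domain containing $F(z)$, namely $\rho_{\Omega^*}$. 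Since $F$ is only quasiconformal there, we cannot compute $\rho_{\Omega^*}(F(z))$ directly.

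We therefore prefer to read the lemma as a statement about $z\in\Delta$, which is the case needed for $w\in\Omega$. We state the result with the first computation as its entire proof, and we remark that this is the main point. If the $\Omega^*$ side is required later, we would handle it by applying the same argument to a uniformization $g:\Delta^*\to\Omega^*$. That side would need separate control, which is expected to come from the later Ahlfors-type estimates rather than from this lemma.
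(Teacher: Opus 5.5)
Your computation for $z\in\Delta$ is exactly the paper's proof: write $j(f(z))=Mf(z,1/\bar z)$, use $\rho_\Omega(f(z))^{1/2}=2/\bigl((1-|z|^2)|f'(z)|\bigr)$, reduce to $1/\bigl(|z|\,|1-\tfrac12 Nf(z)(1/\bar z-z)|\bigr)$, and invoke Proposition \ref{prop: asymptotically conformal curve}(2). Your worry about $z\in A_{1,1/r}$ is moot: by the paper's convention, $z\to\partial\Delta$ means sequences inside $\Delta$ (which is also why $\rho_\Omega(F(z))$ is defined), and the $\Omega^*$-side analogue is handled separately in Lemma \ref{lem: metric and diff com for G} by the same computation with $G$ and $j^*$.
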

\begin{proof}
    By the explicit computation of $F$ and Proposition \ref{prop: asymptotically conformal curve},
    \begin{align*}
        |F(z)-j(F(z))|(\rho_{\Omega}(F(z))^{1/2}/2
        &=\left|\frac{F(z) - F(\frac{1}{\bar{z}})}{F'(z)(1-|z|^2)}\right| \\
        &= \left|\frac{f'(z)(\frac{1}{\bar{z}}-z)}{f'(z)(\frac{1}{\bar{z}}-z)(1-Nf(z)\frac{(\frac{1}{\bar{z}}-z)}{2})}\right|\frac{1}{|z|} \\
        &= \frac{1}{|z|}\frac{1}{|1-Nf(z)\frac{(\frac{1}{\bar{z}}-z)}{2}|} \to 1
    \end{align*}
    as $z \to \partial \Delta$.

    \comment{
    and 
    \begin{align*}
        |(r(G(z_1))-G(z_1))(\rho_{\Omega^*}(G(z_1))^{1/2}|/2
        &=|\frac{G(z_1) - G(\frac{1}{\bar{z_1}})}{G'(z_1)(1-|z_1|^2)}| \\
        &= |\frac{g'(z_1)(\frac{1}{\bar{z_1}}-z_1)}{g'(z_1)(\frac{1}{\bar{z_1}}-z_1)(1-Ng(z_1)\frac{(\frac{1}{\bar{z_1}}-z_1)}{2})}|\frac{1}{|z_1|} \\
        &= \frac{1}{|z_1|}\frac{1}{|1-Ng(z_1)\frac{(\frac{1}{\bar{z_1}}-z_1)}{2}|} \to 1
    \end{align*}
    as $z \to \partial \Delta$ where $z_1 = G^{-1}(j(F(z)))$.
    }    
\end{proof}

Note that for $z\in A_{1,1/r}$,
    \begin{align*}
        \partial_z F(z) &= \partial_z(Mf(\frac{1}{\bar{z}},z)) \\
        &= \frac{f'(\frac{1}{\bar{z}})(1-Nf(\frac{1}{\bar{z}})\frac{z-\frac{1}{\bar{z}}}{2}) - f'(\frac{1}{\bar{z}})(z-\frac{1}{\bar{z}})(\frac{-Nf(\frac{1}{\bar{z}})}{2})}
        {(1-Nf(\frac{1}{\bar{z}})\frac{z-\frac{1}{\bar{z}}}{2})^2} \\
        &= \frac{f'(\frac{1}{\bar{z}})}{(1-Nf(\frac{1}{\bar{z}})\frac{z-\frac{1}{\bar{z}}}{2})^2}.
    \end{align*}

\begin{lemma} \label{lem: comparison for F}
    For all $z\in A_{r,1}$, let $\alpha,\beta\in \gamma$ such that $d(\gamma,j(F(z))) = |j(F(z))-\alpha|$ and $d(\gamma,F(z)) = |F(z)-\beta|$. Then, $$\lim_{z\to\partial \Delta} \frac{d(\gamma,F(z))}{d(\gamma,j(F(z)))} = \limsup_{z\to\partial \Delta} \frac{|F(z)-\alpha|}{d(\gamma,j(F(z)))} = \liminf_{z\to\partial \Delta} \frac{d(\gamma,F(z))}{|j(F(z))-\beta|} = 1.$$
\end{lemma}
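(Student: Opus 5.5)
Since $\alpha$ and $\beta$ lie on $\gamma$, we always have $|F(z)-\alpha|\ge d(\gamma,F(z))$ and $|j(F(z))-\beta|\ge d(\gamma,j(F(z)))$. This gives the chain
\[
\frac{d(\gamma,F(z))}{|j(F(z))-\beta|}\le \frac{d(\gamma,F(z))}{d(\gamma,j(F(z)))}\le \frac{|F(z)-\alpha|}{d(\gamma,j(F(z)))}.
\]
So it suffices to prove two bounds: $\limsup |F(z)-\alpha|/|j(F(z))-\alpha|\le 1$ (recall $|j(F(z))-\alpha|=d(\gamma,j(F(z)))$), and the symmetric bound $\liminf d(\gamma,F(z))/|j(F(z))-\beta| \ge 1$, i.e. $\limsup |j(F(z))-\beta|/|F(z)-\beta|\le 1$. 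The two are proved identically with the roles of $z$ and $1/\bar z$ exchanged. All three limits in the statement then equal $1$.

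The plan is a blow-up argument. For $z_0\in A_{r,1}$ set $\delta=1-|z_0|$ and define the rescaled map
\[
g_{z_0}(u)=\frac{F(z_0+\delta u)-F(z_0)}{\delta f'(z_0)}
\]
on a disk $|u|<R$ (for $\delta$ small this disk lies in $A_{r,1/r}$). I claim that $g_{z_0}\to \mathrm{id}$ uniformly on $|u|\le R$ as $|z_0|\to1$.

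On the part mapping into $\Delta$, $g_{z_0}$ is holomorphic. Its nonlinearity at $u$ is $\delta\, Nf(z_0+\delta u)$. By Proposition \ref{prop: asymptotically conformal curve}(2), $(1-|z|^2)Nf(z)\to0$, and $1-|z_0+\delta u|$ is comparable to $\delta$ away from the circle. Integrating $\log g'$ twice therefore gives $g'\to1$ and $g\to u$.

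On the exterior part, I will use the explicit formula $F(z)=Mf(1/\bar z,z)$ together with the computed $\partial_zF$. Evaluating $f$, $f'$ and $Nf$ at the reflected point $1/\bar z$, which also lies at distance $\asymp\delta$ from $z_0$, the same estimates show the exterior formula is $f(1/\bar z)+f'(1/\bar z)(z-1/\bar z)(1+o(1))$. Comparing $f'(1/\bar z)$ to $f'(z_0)$ via the nonlinearity bound, this again tends to the identity after rescaling.

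Near $|u|$ with $|z_0+\delta u|\approx1$ the comparability fails. I would handle this by continuity of $F$ across the circle and the uniform quasiconformality of $F$ on $A_{r,1/r}$, which lets one pass equicontinuity across. Alternatively, one can use the fact that the two one-sided limits agree on the circle.

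Next I need to localize $\alpha$. We have $d(\gamma,j(F(z)))\le |j(F(z))-F(z/|z|)|$, and in rescaled coordinates this is $\approx|1/\bar z - z/|z||/\delta\approx1$. So $\alpha$ lies within Euclidean distance $O(|f'(z_0)|\delta)$ of $F(z_0)$. Since $F$ is quasiconformal, hence locally quasisymmetric, on $A_{r,1/r}$, the preimage $F^{-1}(\alpha)=\zeta\in\partial\Delta$ satisfies $|\zeta-z_0|\le R\delta$ for some fixed $R$ independent of $z_0$. Inside the rescaled window, the picture converges to the model configuration: the unit circle becomes a line $L$ (in the limit), $z_0\mapsto 0$, and $1/\bar z_0$ maps to its mirror image across $L$.

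In the model, every point of $L$ is equidistant from $0$ and its reflection. Uniform convergence $g_{z_0}\to\mathrm{id}$ on $|u|\le R$ then gives
\[
|F(z_0)-\alpha| = |j(F(z_0))-\alpha|\,(1+o(1)),
\]
and similarly for $\beta$, which is what is required.

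The main obstacle is making the blow-up rigorous. This means uniform convergence of $g_{z_0}$ on a full window straddling $\partial\Delta$, including the delicate strip near the circle, together with the quasisymmetry step that confines $\alpha$ and $\beta$ to a window of bounded rescaled size. For the former I would first try normal-family compactness. Uniformly $K$-quasiconformal maps normalized at two points form a normal family. Any limit is conformal off the circle with Beltrami coefficient tending to $0$, and $\mu_F\to0$ by Proposition \ref{prop: asymptotically conformal curve}(4), applied via the Ahlfors--Weill formula for $\mu_F$. Hence any limit is $1$-quasiconformal, hence affine. The normalization $g(0)=0$, $g'(0)\to1$ forces the limit to be the identity.
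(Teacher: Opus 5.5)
Your proof is correct, but it takes a different route from the paper. The paper never rescales; it treats $j$ as an asymptotic anti-isometry.

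\textbf{The paper's argument.} Because $\alpha$ is a nearest point of $\gamma$ to $j(F(z))$, the open segment $\tau$ from $\alpha$ to $j(F(z))$ misses $\gamma$. So $\tau$ lies in $F(A_{1,1/r})$, within distance $d(\gamma,j(F(z)))$ of $\gamma$. Writing $|F(z)-\alpha|=|j(j(F(z)))-j(\alpha)|$ and integrating $dj$ along $\tau$ gives
\[
|F(z)-\alpha|\le \|j_{\bar z}\|_{\infty,\tau}\bigl(1+\|j_z/j_{\bar z}\|_{\infty,\tau}\bigr)\,|j(F(z))-\alpha|.
\]
The explicit formulas then give $|j_{\bar z}(F(z))|\to 1$ and $|j_z/j_{\bar z}|=|\mu_F|\to 0$ as $|z|\to 1^+$, using only Proposition \ref{prop: asymptotically conformal curve}(2),(3). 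The bound for $\beta$ is symmetric. This takes a few lines and needs neither localization of $\alpha,\beta$ nor any compactness.

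\textbf{Your argument, compared.} Your blow-up needs heavier input: normal families of quasiconformal maps with $\mu\to 0$ (or delicate estimates across the strip near $S^1$), plus a quasisymmetry step. In exchange it uses only $C^0$ information about $j$. It also produces a single limiting picture, the rescaled $F$ tending to the identity. From that picture, Lemmas \ref{lem: metric and diff com for F} and \ref{lem: diff and dist com for F}, and hence Theorem \ref{thm: hyp and quasi hyp}, follow at once.

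\textbf{Two details to state explicitly.}
\begin{enumerate}
\item Turning the additive $o(1)$ error (in rescaled units) into a factor $1+o(1)$ needs two facts. First, $v=(\zeta-z_0)/\delta$ with $|\zeta|=1$ satisfies $|v|\ge 1$. Second, you need the exact identity $|\zeta-1/\bar z_0|=|\zeta-z_0|/|z_0|$.
\item Local quasisymmetry applies only once you know $F^{-1}(\alpha)$ is Euclidean-close to $z_0$; this follows from uniform continuity of $F^{-1}$ near $\gamma$. You can skip the quasisymmetry step entirely: uniform convergence $g_{z_0}\to\mathrm{id}$ on $|u|\le R$, together with injectivity and a degree argument, already shows that $\alpha$ lies in the image of the window.
\end{enumerate}
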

\begin{proof}
    Since $F$ is an embedding on $A_{r,1/r}$, by compactness of $\gamma$, there exists $r'>0$ such that the $r'$-neighborhood $N$ of $\gamma$ is contained in $F(A_{r,1/r})$. Let $U = F^{-1}(N)$. Note that for all $z\in U\cap \Delta$, the image of the path $\tau(t) = tj(F(z)) + (1-t)\alpha$ for $0<t<1$ lies in $F(A_{1,1/r})$. Since $j$ is $C^2(A_{r,1/r}-S^1)$ and $j$ is a reflection that fixes $\gamma$ pointwise,
    \begin{align*}
        d(\gamma,F(z))
        &\leq |F(z) -\alpha| \\
        &= |j^2(F(z)) - j(\alpha)| \\
        &\leq \int_0^1 \left|\frac{d}{dt}j(\tau(t))\right|dt \\
        &\leq \int_0^1 \left|j_z(\tau(t)) + j_{\bar{z}}(\tau(t))\frac{\overline{\tau'(t)}}{\tau'(t)}\right| |\tau'(t)|dt \\
        &\leq \|j_{\bar{z}}\|_{\infty,\tau}\left(1 + \|\frac{j_z}{j_{\bar{z}}}\|_{\infty,\tau}\right)|j(F(z)) - \alpha|
    \end{align*}
    where $\| h\|_{\infty,\tau} = \sup_{0\leq t\leq 1} \|h(\tau(t))\|$. For $z\in A_{1,1/r}$,
    \begin{align*}
        |j_{\bar{z}}(F(z))| &= \left|F_z\left(\frac{1}{\overline{z}}\right)\left(\frac{-1}{\overline{z}^2}\right)\overline{F^{-1}_z(F(z))}\right| \\
        &= \left|F_z\left(\frac{1}{\overline{z}}\right)\left(\frac{-1}{\overline{z}^2}\right)\frac{1}{\overline{F_z(z)}(1-|\mu_F(z)|^2)}\right| \\
        &= \left|\left(\frac{f_z(\frac{1}{\bar{z}})}{\overline{f_z(\frac{1}{\bar{z}})}}\right)\left(\frac{-1}{\overline{z}^2}\right)\frac{\left(\overline{1-Nf(\frac{1}{\bar{z}})\frac{(z-\frac{1}{\bar{z}})}{2}}\right)^2}{1-|\mu_F(z)|^2}\right| \to 1
    \end{align*} 
    as $z \to \partial \Delta^*$.
    Likewise,
    \begin{align*}
        \frac{|j_z(F(z))|}{|j_{\bar{z}}(F(z))|} = 
        |\mu_{F^{-1}}(F(z))| = |\mu_F(z)| \to 0
    \end{align*}
    as $z \to \partial\Delta^*$. Hence,
    \begin{align*}
        \limsup_{z\to \partial \Delta} \frac{d(\gamma,F(z))}{d(\gamma,j(F(z)))} \leq \limsup_{z\to \partial \Delta} \frac{|F(z)-\alpha|}{d(\gamma,j(F(z)))} \leq 1.
    \end{align*}

    A similar argument shows that 
    \begin{align*}
        \limsup_{z\to \partial \Delta} \frac{d(\gamma,j(F(z)))}{d(\gamma,F(z))} \leq \limsup_{z\to \partial \Delta} \frac{|j(F(z))-\beta|}{d(\gamma,F(z))} \leq 1.
    \end{align*}
\end{proof}

\begin{lemma} \label{lem: diff and dist com for F}
    $\lim_{z\to \partial \Delta}\frac{|F(z)-j(F(z))|}{2d(\gamma,j(F(z)))} = 1.$
\end{lemma}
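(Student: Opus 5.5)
The plan is to squeeze the ratio between two bounds that both tend to $1$. Both bounds come from the triangle inequality combined with Lemma \ref{lem: comparison for F}; no further computation with $F$ should be needed. Throughout, take $z\in A_{r,1}$ close enough to $\partial\Delta$ that $F(z)$ lies in the neighborhood $N$ from the proof of Lemma \ref{lem: comparison for F}. Then $F(z)\in\Omega$ and $j(F(z))\in\Omega^*$, since $F$ maps $A_{1,1/r}$ outside $\overline{\Omega}$. As in that lemma, let $\alpha\in\gamma$ be a point with $d(\gamma,j(F(z)))=|j(F(z))-\alpha|$.

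\emph{Upper bound.} By the triangle inequality,
\begin{align*}
\frac{|F(z)-j(F(z))|}{2d(\gamma,j(F(z)))} \leq \frac{|F(z)-\alpha| + |\alpha-j(F(z))|}{2d(\gamma,j(F(z)))} = \frac{1}{2}\left(\frac{|F(z)-\alpha|}{d(\gamma,j(F(z)))} + 1\right).
\end{align*}
Lemma \ref{lem: comparison for F} gives $\limsup_{z\to\partial\Delta}|F(z)-\alpha|/d(\gamma,j(F(z)))=1$. Hence the $\limsup$ of the ratio in the statement is at most $1$.

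\emph{Lower bound.} The straight segment from $F(z)\in\Omega$ to $j(F(z))\in\Omega^*$ is connected and joins the two complementary components of $\gamma$. By the Jordan curve theorem it therefore meets $\gamma$ at some point $\beta'$. Since $\beta'$ lies on the segment,
\begin{align*}
|F(z)-j(F(z))| = |F(z)-\beta'| + |\beta'-j(F(z))| \geq d(\gamma,F(z)) + d(\gamma,j(F(z))).
\end{align*}
Dividing by $2d(\gamma,j(F(z)))$ gives
\begin{align*}
\frac{|F(z)-j(F(z))|}{2d(\gamma,j(F(z)))} \geq \frac{1}{2}\left(\frac{d(\gamma,F(z))}{d(\gamma,j(F(z)))}+1\right).
\end{align*}
By the first equality in Lemma \ref{lem: comparison for F}, the right-hand side tends to $1$. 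So the $\liminf$ of the ratio is at least $1$, and together with the upper bound the limit equals $1$.

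The only delicate point is the lower bound, specifically confirming that $j(F(z))$ really lies in $\Omega^*$, so that the segment must cross $\gamma$. This holds because $1/\bar z\in A_{1,1/r}$, and $F$ is a homeomorphism of $A_{r,1/r}$ that maps $S^1$ onto $\gamma$ and $A_{r,1}$ into $\Omega$. Consequently $F(A_{1,1/r})$ is the part of the annular image on the other side of $\gamma$, namely a subset of $\Omega^*$. Everything else is a direct consequence of Lemma \ref{lem: comparison for F}.
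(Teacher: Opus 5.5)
Your proposal is correct and follows essentially the same route as the paper. The upper bound is the identical triangle-inequality argument. For the lower bound, the paper writes the estimate via $\delta(z)=\min\bigl(d(\gamma,j(F(z))),d(\gamma,F(z))\bigr)$ and the inequality $2\delta(z)\le|F(z)-j(F(z))|$, which it leaves implicit but which rests on the segment-crossing fact you make explicit; your bound $|F(z)-j(F(z))|\ge d(\gamma,F(z))+d(\gamma,j(F(z)))$ is just a slightly sharper form of it, and both arguments then conclude from $\lim d(\gamma,F(z))/d(\gamma,j(F(z)))=1$ in Lemma \ref{lem: comparison for F}.
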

\begin{proof}
    For all $z\in A_{r,1}$, let $\alpha$ be a point in $\gamma$ such that $d(\gamma,j(F(z))) = |j(F(z))-\alpha|$. Since $j$ fixes $\gamma$ pointwise, by the triangle inequality,
    \begin{align*}
        \frac{|F(z) - j(F(z))|}{d(\gamma,j(F(z)))} &\leq \frac{|F(z) - \alpha|}{d(\gamma,j(F(z)))} + 1. 
    \end{align*}
    By Lemma \ref{lem: comparison for F}, $\limsup_{z\to \partial \Delta}\frac{|F(z)-j(F(z))|}{2d(\gamma,j(F(z)))} \leq 1$.

    For the other inequality,
    \begin{align*}
        \frac{2d(\gamma,j(F(z)))}{|F(z)-j(F(z))|} = \frac{d(\gamma,j(F(z)))}{\delta(z)}\frac{2\delta(z)}{|F(z)-j(F(z))|} \leq \frac{d(\gamma,j(F(z)))}{\delta(z)}
    \end{align*}
    where $\delta(z) = \min(d(\gamma,j(F(z))),d(\gamma,F(z)))$.
    Since $\frac{d(\gamma,j(F(z))}{\delta(z)} = \max (1,\frac{d(\gamma,j(F(z))}{d(\gamma,F(z))})$, by Lemma \ref{lem: comparison for F},
    \begin{align*}
        \limsup_{z\to \partial \Delta} \frac{2d(\gamma,j(F(z)))}{|F(z)-j(F(z))|} \leq \lim_{z\to \partial \Delta}\frac{d(\gamma,j(F(z)))}{\delta(z)} = \lim_{z\to \partial \Delta}\frac{d(\gamma,j(F(z)))}{d(\gamma,F(z))} = 1. 
    \end{align*}
\end{proof}

\begin{proof}[Proof of Theorem \ref{thm: hyp and quasi hyp}]
    Note that 
    \begin{align*}
        \rho_{\Omega}(w)d(w,\gamma)^2 = \frac{|w-j(w)|^2\rho_\Omega(w)}{4}\frac{4d(j(w),\gamma)^2}{|w-j(w)|^2}\frac{d(w,\gamma)^2}{d(j(w),\gamma)^2}.
    \end{align*}
    The conclusion follows from Lemmas \ref{lem: metric and diff com for F}, \ref{lem: diff and dist com for F} and \ref{lem: comparison for F}.
\end{proof}

\begin{remark}
    One may in fact use the local quasireflection $j_P(w) = P\left(\frac{1}{\overline{P^{-1}(w)}}\right)$ where $P$ is the extension given in the Section \ref{sec: result} and a similar approach can be used to prove all the statements mentioned in this section. However, because the connection between extension $F$ and the Epstein-Poincar\'{e} surface is needed in Section \ref{sec: dist btw Ep}, we will only prove all the statements in this section using $F$. 
    
    In the Appendix, we will derive a metric that is naturally related to $P\left(\frac{1}{\overline{P^{-1}(w)}}\right)$.
\end{remark}

\section{The Earle--Nag Estimate for Asymptotically Conformal Curve}\label{sec: EN est}

For $\gamma$ an asymptotically conformal curve, with the same notation as in the previous section, the function $EN(w)$ on $F(A_{r,1/r})$ with respect to $j$ is given by
\[ EN(w) = |w-j(w)|^4\rho_\Omega(w) \rho_{\Omega^*}(j(w)) \]
for all $w \in \Omega \cap F(A_{r,1/r})$. 

For the rest of this section, we will assume that our Jordan curve is asymptotically conformal. The main theorem of this section is Theorem \ref{thm: EN converges}. We begin by discussing the properties of the biholomorphic map $g:\Delta^*\to \Omega^*$ when $\gamma$ is asymptotically conformal. 

\begin{lemma} \label{lem: asy ppty for g}
    For any asymptotically conformal curve $\gamma$, 
    \begin{enumerate}
        \item $\lim_{|z|\to 1^+}Ng(z)(|z|^2-1) = 0$, 
        \item $\lim_{|z|\to 1^+} \|Sg(z)\|_{\Delta^*}=0$.
    \end{enumerate}
\end{lemma}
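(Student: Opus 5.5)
Our plan is to reduce to Proposition \ref{prop: asymptotically conformal curve}, applied to the bounded domain obtained by inverting the exterior picture. We may translate so that $0\in\Omega$; this changes neither $\gamma$'s property of being asymptotically conformal nor the quantities $Ng(z)(|z|^2-1)$ and $\|Sg(z)\|_{\Delta^*}$, since translation leaves $g''/g'$ and $Sg$ unchanged. Set $\iota(w)=1/w$ and define
\[ h(\zeta) = \frac{1}{g(1/\zeta)}, \qquad \zeta\in\Delta. \]
Then $h$ is a biholomorphism from $\Delta$ onto the bounded Jordan domain $\iota(\Omega^*)$, whose boundary is $\iota(\gamma)$. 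If $g(\infty)=\infty$, then $h(0)=0$; otherwise we post-compose with a M\"obius map, which does not affect the argument.

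The first step is to check that $\iota(\gamma)$ is asymptotically conformal. Since $\gamma$ is compact and avoids $0$, the map $\iota$ is a diffeomorphism near $\gamma$. Locally it is close to a similarity: $|1/a-1/b| = |a-b|/(|a||b|)$, and for $w_1,w_2$ close together the factors $|a||b|$ with $a,b\in C(w_1,w_2)$ are uniformly close to $|w_1|^2$. Hence the three-point ratio in the definition tends to $1$ for $\iota(\gamma)$ exactly when it does for $\gamma$. The arcs of smaller diameter correspond to each other for small arcs, because $\iota$ is bi-Lipschitz near $\gamma$. Alternatively, one can quote M\"obius invariance of asymptotic conformality. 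By Proposition \ref{prop: asymptotically conformal curve}, we then get $Nh(\zeta)(1-|\zeta|^2)\to0$ and $\|Sh(\zeta)\|_\Delta\to0$ as $|\zeta|\to1^-$.

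The second step transfers these statements to $g$. Because the Schwarzian is M\"obius invariant, $Sh(\zeta) = Sg(1/\zeta)\,(-1/\zeta^2)^2$ as quadratic differentials, i.e.\ $Sh = (\iota\circ)^*Sg$. The map $\zeta\mapsto1/\zeta$ is an isometry from $\rho_\Delta$ to $\rho_{\Delta^*}$, so $\|Sh(\zeta)\|_\Delta = \|Sg(1/\zeta)\|_{\Delta^*}$, which gives (2). For (1), we write $g(z) = 1/h(1/z)$ and use the chain rule:
\[ Ng(z) = -\frac{2}{z} - \frac{1}{z^2}Nh(1/z) - 2\frac{h'(1/z)}{z^2 h(1/z)}. \]
Multiplying by $|z|^2-1 = |z|^2(1-|\zeta|^2)$ with $\zeta=1/z$, the middle term becomes $Nh(\zeta)(1-|\zeta|^2)$ up to a unimodular factor, and so tends to $0$. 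The first term is bounded by $2|z|(|z|^2-1)\to0$. The last term is $\frac{h'(\zeta)}{h(\zeta)}(1-|\zeta|^2)$ up to a bounded factor. This tends to $0$ because $h$ is bounded away from $0$ and $h'(\zeta)(1-|\zeta|^2)\to0$. The latter holds either because $h$ extends to a quasiconformal map, hence is H\"older, or more simply by integrating $Nh\cdot(1-|\zeta|^2)\to0$, which gives $|h'(\zeta)| = o((1-|\zeta|)^{-1})$ after a short estimate.

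The main obstacle is this last term. If a short integration argument does not settle it, we would instead choose the point of inversion so that $g(\infty)=\infty$, and work directly with the invariance of the quantity $Nf(z)(1-|z|^2)\to0$ under post-composition by M\"obius maps. Concretely, $N(T\circ h) = Nh + \frac{T''}{T'}\circ h\cdot h'$, so the issue again reduces to $h'(\zeta)(1-|\zeta|^2)\to0$. This follows from $h$ being continuous up to the boundary on a quasidisk: the Koebe-type distortion estimate $|h'(\zeta)|(1-|\zeta|^2)\le 4\,d(h(\zeta),\partial)$ tends to $0$ since $h(\zeta)\to\partial(\iota(\Omega^*))$.
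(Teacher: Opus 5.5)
Your proposal is correct and follows essentially the paper's route. You translate and invert by $1/z$, check that asymptotic conformality survives the inversion, and apply Proposition \ref{prop: asymptotically conformal curve} to the resulting bounded map. You then use M\"obius invariance of the Schwarzian for (2), and for (1) a chain-rule comparison of nonlinearities whose extra term is killed by the Koebe estimate; the paper phrases this as $|g'(z)|(|z|^2-1)=2\rho_{\Omega^*}(g(z))^{-1/2}\to 0$. The only slip is the sign of the last term in your formula for $Ng$, which should be $+2\frac{h'(1/z)}{z^2h(1/z)}$; this is harmless, since you only use its modulus.
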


\begin{proof}
    Recall that an asymptotically conformal curve satisfies
    \begin{align*}
    \max_{w\in C(w_1,w_2)} \frac{|w_2-w| + |w-w_1|}{|w_2-w_1|}\to 1 
\end{align*}
as $|w_2-w_1| \to 0$. Therefore, asymptotic conformality is invariant under translation and complex multiplication. Furthermore, if $\gamma$ does not contain $0$, let $\widetilde{\gamma}$ be the image of $\gamma$ under $1/z$. Then, for $z\in C(z_1,z_2) \subset \widetilde{\gamma}$,
\begin{align*}
    \frac{|z_2-z| + |z-z_1|}{|z_2-z_1|} 
    = \frac{\frac{|w_2-w|}{|w_2w|} + \frac{|w-w_1|}{|w_1w|}}{\frac{|w_2-w_1|}{|w_2w_1|}} 
    = \frac{\frac{|w_1|}{|w|}|w_2-w|+\frac{|w_2|}{|w|}|w_1-w|}{|w_2-w_1|}
\end{align*}
where $w=\frac{1}{z}$, $w_i = \frac{1}{z_i}$, $i=1,2$. Note that, 
\begin{align*}
    \frac{\min(|w_1|,|w_2|)}{|w|} \frac{|w_2-w|+|w_1-w|}{|w_2-w_1|} 
    &\leq \frac{\frac{|w_1|}{|w|}|w_2-w|+\frac{|w_2|}{|w|}|w_1-w|}{|w_2-w_1|} \\
    &\leq \frac{\max(|w_1|,|w_2|)}{|w|} \frac{|w_2-w|+|w_1-w|}{|w_2-w_1|}.
\end{align*}
By the continuity of $1/z$ and the asymptotic conformality of $\gamma$, $\widetilde{\gamma}$ is asymptotically conformal.

In order to show (1) and (2), consider $\psi = \iota \circ t\circ  g\circ \iota$ where $\iota(z) = 1/z$ and $t$ is a translation such that the image of $\gamma$ avoids $0$. Note that the image of $\gamma$ under $\iota\circ t$ is asymptotically conformal and $\psi$ is a biholomorphic map from $\Delta$ to the bounded Jordan domain $\iota(t(\Omega^*))$. By Proposition \ref{prop: asymptotically conformal curve}, 
\begin{enumerate}
    \item $\lim_{|z|\to 1^-}N\psi(z)(1-|z|^2) = 0,$
    \item $\lim_{|z|\to 1^-} \|S\psi(z)\|_\Delta = 0.$
\end{enumerate}
For (1), by direct computation, $$N\psi(z)(1-|z|^2) = -Ng(\frac{1}{z})(1-|z|^2)\frac{1}{z^2} + 2\frac{g'(\frac{1}{z})(1-|z|^2)}{t(g(\frac{1}{z}))z^2} - \frac{2(1-|z|^2)}{z}.$$
Note that $\lim_{|z|\to 1^-} \frac{2(1-|z|^2)}{z} = 0$ and
\begin{align*}
    \lim_{|z|\to 1^-} \frac{|g'(\frac{1}{z})|(1-|z|^2)}{|t(g(\frac{1}{z}))z^2|} 
    = \lim_{|z|\to 1^+} \frac{|g'(z)|(|z|^2-1)}{|t(g(z))|} 
    = \lim_{|z|\to 1^+} \frac{2}{|t(g(z))|\sqrt{\rho_{\Omega^*}(g(z))}} = 0.
\end{align*}
Therefore,
\begin{align*}
    0 = \lim_{|z|\to 1^-} N\psi(z)(1-|z|^2) = \lim_{|z|\to 1^-}Ng(\frac{1}{z})\left(\frac{1}{|z|^2}-1\right) = \lim_{|z|\to 1^+} Ng(z)(|z|^2-1). 
\end{align*}
For (2), it follows from considering $\psi$ and noting that the Schwarzian norm is invariant under pre- and post-composition by M\"{o}bius transformations.
\end{proof}

With Lemma \ref{lem: asy ppty for g} and a similar argument, by increasing $r<1$ if necessary, $g$ extends to a quasiconformal homeomorphism $G$ from $\hat{\C} -\Delta_r$ onto its image in $\hat{\C}$ satisfying $$ G(z) = \begin{cases}
        Mg(\frac{1}{\bar{z}},z) &, z\in A_{r,1}, \\
        g(z) &,z\in \Delta^*.
    \end{cases}$$
    
Likewise, its Beltrami differential satisfies
\begin{align*}
    |\mu_G(z)| = \begin{cases}
        2\|Sg(\frac{1}{\bar{z}})\|_{\Delta^*} &, z\in A_{r,1} \\   
        0 &, z\in \Delta^*. \end{cases}
\end{align*}
Let $j^*(w) = G\left(\frac{1}{\overline{G^{-1}(w)}}\right)$ for all $w\in G(A_{r,1/r})$.

Using similar arguments, we obtain the following lemmas for the quasiconformal map $G$,
\begin{lemma} \label{lem: metric and diff com for G}
    $\lim_{z_1\to \partial\Delta^*} \frac{|(j^*(G(z_1))-G(z_1))(\rho_{\Omega^*}(G(z_1))^{1/2}|}{2} = 1$.
\end{lemma}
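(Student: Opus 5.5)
The argument mirrors the proof of Lemma \ref{lem: metric and diff com for F}, with $g$ and $G$ replacing $f$ and $F$, and Lemma \ref{lem: asy ppty for g} replacing Proposition \ref{prop: asymptotically conformal curve}.

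We take $z_1 \in \Delta^*$ with $|z_1|$ close to $1$. Then $1/\bar z_1 \in A_{r,1}$, so by definition
\[
j^*(G(z_1)) = G\left(\frac{1}{\bar z_1}\right) = Mg\left(z_1,\frac{1}{\bar z_1}\right).
\]
We need the hyperbolic metric of $\Delta^*$. It is the pullback of $\rho_\Delta$ under $\iota(z)=1/z$, which gives $\rho_{\Delta^*}(z) = \frac{4}{(|z|^2-1)^2}|dz|^2$. Since $g$ is a biholomorphism onto $\Omega^*$, we get
\[
\rho_{\Omega^*}(g(z_1))^{1/2} = \frac{2}{|g'(z_1)|(|z_1|^2-1)}.
\]
This is the same identity already used in the proof of Lemma \ref{lem: asy ppty for g}.

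Next we insert the explicit osculating M\"obius formula with $p = 1/\bar z_1$:
\[
Mg(z_1,p) - g(z_1) = \frac{g'(z_1)(p-z_1)}{1 - Ng(z_1)\frac{p-z_1}{2}}.
\]
Using $p - z_1 = \frac{1-|z_1|^2}{\bar z_1}$, hence $|p-z_1| = \frac{|z_1|^2-1}{|z_1|}$, the factors $|g'(z_1)|$ and $|z_1|^2-1$ cancel. The quantity in the statement becomes
\[
\frac{1}{|z_1|}\cdot\frac{1}{\left|1 - Ng(z_1)\frac{1-|z_1|^2}{2\bar z_1}\right|}.
\]

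Finally, by Lemma \ref{lem: asy ppty for g}(1), $Ng(z_1)(|z_1|^2-1)\to 0$ as $|z_1|\to 1^+$, and $|z_1|\to 1$. Hence the expression tends to $1$.

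The only point needing care is interpreting $z_1\to\partial\Delta^*$. We read it as $z_1\in\Delta^*$ with $d(z_1,\partial\Delta^*)\to 0$, which is exactly $|z_1|\to 1^+$. Under this reading $z_1$ stays in the region where $g$ is defined and where $G(1/\bar z_1)$ is given by the osculating formula, because the annulus $A_{r,1}$ was chosen small enough after increasing $r$. There is no real obstacle beyond bookkeeping of the sign of $1-|z_1|^2$ inside the modulus, which is harmless.
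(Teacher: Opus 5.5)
Your proof is correct and is essentially the paper's argument. The paper obtains this lemma by repeating the computation of Lemma \ref{lem: metric and diff com for F} with $g, G$ in place of $f, F$: it inserts the osculating M\"{o}bius formula for $Mg(z_1,1/\bar z_1)$ and cancels $|g'(z_1)|(|z_1|^2-1)$, leaving $\frac{1}{|z_1|}\bigl|1-Ng(z_1)\frac{1/\bar z_1 - z_1}{2}\bigr|^{-1}$, and concludes with Lemma \ref{lem: asy ppty for g}(1).
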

\begin{lemma}
    For all $z_1\in \Delta^*$, let $\alpha,\beta\in \gamma$ such that $d(\gamma,G(z_1))) = |G(z_1))-\alpha|$ and $d(\gamma,j^*(G(z_1))) = |j^*(G(z_1))-\beta|$. Then, $\lim_{z_1\to\partial \Delta^*} \frac{d(\gamma,j^*(G(z_1)))}{d(\gamma,G(z_1))} = \limsup_{z_1\to\partial \Delta^*} \frac{|j^*(G(z_1))-\alpha|}{d(\gamma,G(z_1))} = \liminf_{z_1\to\partial \Delta^*} \frac{d(\gamma,j^*(G(z_1)))}{|G(z_1)-\beta|} = 1$.
\end{lemma}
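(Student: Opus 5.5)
The plan is to mirror the proof of Lemma \ref{lem: comparison for F}, with the roles of $\Delta$ and $\Delta^*$ swapped, $F$ replaced by $G$, and $j$ replaced by $j^*$. First I fix the neighborhood. Since $G$ is an embedding on $A_{r,1/r}$ and $\gamma$ is compact, there is $r'>0$ such that the $r'$-neighborhood $N^*$ of $\gamma$ lies in $G(A_{r,1/r})$. For $z_1\in G^{-1}(N^*)\cap\Delta^*$ close enough to $\partial\Delta^*$, the segment $\tau(t)=tG(z_1)+(1-t)\alpha$, $0<t<1$, lies in $N^*\cap\Omega^*=G(A_{1,1/r})\cap\Omega^*$. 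This is because the open segment from $G(z_1)$ to a nearest point $\alpha$ of $\gamma$ meets no point of $\gamma$. Since $j^*$ fixes $\gamma$ pointwise, $j^*(\alpha)=\alpha$, and integrating along $\tau$ gives
\begin{align*}
d(\gamma,j^*(G(z_1)))\le |j^*(G(z_1))-\alpha| \le \|j^*_z\|_{\infty,\tau}\Bigl(1+\Bigl\|\frac{j^*_{\bar z}}{j^*_z}\Bigr\|_{\infty,\tau}\Bigr)|G(z_1)-\alpha|.
\end{align*}
On $\Omega^*$ the map $j^*$ is $G\circ(1/\bar z)\circ g^{-1}$, so here the roles of $j_z$ and $j_{\bar z}$ relative to Lemma \ref{lem: comparison for F} trade places. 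This only relabels which derivative is dominant.

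The key step is the derivative asymptotics. For $w=G(\zeta)$ with $\zeta\in\Delta^*$ I set $\eta=1/\bar\zeta\in A_{r,1}$, so that $j^*(w)=G(\eta)=Mg(1/\bar\eta,\eta)=Mg(\zeta,\eta)$. The same differentiation done for $\partial_zF$ gives
\[
\partial_\eta Mg(\zeta,\eta)=\frac{g'(\zeta)}{\bigl(1-Ng(\zeta)\tfrac{\eta-\zeta}{2}\bigr)^2},
\]
and $g^{-1}$ is conformal with derivative $1/g'(\zeta)$. By the chain rule, the dominant derivative of $j^*$ at $w$ has modulus
\[
\frac{|\bar\zeta|^{-2}}{\bigl|1-Ng(\zeta)\frac{(1/\bar\zeta-\zeta)}{2}\bigr|^2}\Bigl(1+O(|\mu_G(\eta)|)\Bigr).
\]
Here $|1/\bar\zeta-\zeta|$ is comparable to $|\zeta|^2-1$, so Lemma \ref{lem: asy ppty for g}(1) makes this modulus tend to $1$ as $\zeta\to\partial\Delta^*$. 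The ratio of the two derivatives equals $|\mu_G(\eta)|=2\|Sg(\zeta)\|_{\Delta^*}$, which tends to $0$ by Lemma \ref{lem: asy ppty for g}(2).

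The main obstacle is making these bounds uniform along the whole segment $\tau$, not just at its endpoint $G(z_1)$. The segment has length $d(\gamma,G(z_1))\to0$ and lies in $N^*\cap\Omega^*$, so every point of it is within distance $d(\gamma,G(z_1))$ of $\gamma$. Since $g^{-1}$ extends to a homeomorphism $\overline{\Omega^*}\to\overline{\Delta^*}$ and $\gamma$ is compact, points of $\Omega^*$ that are uniformly close to $\gamma$ have $g$-preimages uniformly close to $\partial\Delta^*$. This uniform continuity makes both sup norms over $\tau$ converge to $1$ and $0$ respectively. It follows that $\limsup\frac{d(\gamma,j^*(G(z_1)))}{d(\gamma,G(z_1))}\le\limsup\frac{|j^*(G(z_1))-\alpha|}{d(\gamma,G(z_1))}\le1$.

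For the reverse direction I apply the same argument to the point $j^*(G(z_1))\in\Omega$ with a nearest point $\beta$, using $j^*\circ j^*=\Id$. The needed derivative asymptotics on $\Omega\cap G(A_{r,1})$ follow by inverting those on $\Omega^*$, because $j^*$ is an involution. This gives $\limsup\frac{d(\gamma,G(z_1))}{d(\gamma,j^*(G(z_1)))}\le\limsup\frac{|G(z_1)-\beta|}{d(\gamma,j^*(G(z_1)))}\le1$. Combining the two directions yields the limit $1$ for the first quantity and, squeezing, the stated $\limsup$ and $\liminf$ equalities.
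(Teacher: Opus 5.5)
Your proposal is correct and follows the paper's intended route: the paper's argument for this lemma is the proof of Lemma \ref{lem: comparison for F} transplanted to $G$ and $j^*$, and you additionally make explicit the uniformity of the derivative bounds along the segment, which the paper leaves implicit. One small correction: $j^*$ is orientation-reversing, so on $\Omega^*$ it is still $j^*_{\bar z}$ that dominates, with $|j^*_{\bar z}|\to 1$ and $|j^*_z/j^*_{\bar z}|=|\mu_G(\eta)|\to 0$ just as in the $F$ case; your displayed inequality should therefore read $\|j^*_{\bar z}\|_{\infty,\tau}\bigl(1+\|j^*_z/j^*_{\bar z}\|_{\infty,\tau}\bigr)$ rather than the ``traded'' labels, but since your subsequent computation is phrased in terms of the dominant derivative and the ratio $|\mu_G(\eta)|$, nothing else changes.
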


\begin{lemma} \label{lem: diff and dist com for G}
    $\lim_{z_1\to \partial \Delta^*} \frac{2d(\gamma,G(z_1))}{|G(z_1) - j^*(G(z_1))|} = 1$. 
\end{lemma}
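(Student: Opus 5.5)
The plan is to mirror the proof of Lemma \ref{lem: diff and dist com for F}, with the roles of $F$, $j$, $\Omega$ replaced by $G$, $j^*$, $\Omega^*$. The preceding lemma plays the part of Lemma \ref{lem: comparison for F}. I will take that lemma as given; its proof is the analogue of Lemma \ref{lem: comparison for F}, using $|j^*_{\bar z}|\to 1$ and $|\mu_G|\to 0$ near $\gamma$, both of which follow from Lemma \ref{lem: asy ppty for g}. I prove the two one-sided bounds $\limsup\le 1$ for the ratio and for its reciprocal separately.

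\emph{Upper bound.} For $z_1\in\Delta^*$ near $\partial\Delta^*$, choose $\alpha\in\gamma$ with $d(\gamma,G(z_1))=|G(z_1)-\alpha|$. Since $j^*$ fixes $\gamma$ pointwise, the triangle inequality gives
\[
\frac{|G(z_1)-j^*(G(z_1))|}{d(\gamma,G(z_1))}\le 1+\frac{|j^*(G(z_1))-\alpha|}{d(\gamma,G(z_1))}.
\]
By the preceding lemma, the $\limsup$ of the last term is $1$. Hence
\[
\limsup_{z_1\to\partial\Delta^*}\frac{|G(z_1)-j^*(G(z_1))|}{2d(\gamma,G(z_1))}\le 1,
\]
which says that $\liminf$ of the ratio in the statement is at least $1$.

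\emph{Lower bound.} Set $\delta(z_1)=\min\bigl(d(\gamma,G(z_1)),d(\gamma,j^*(G(z_1)))\bigr)$. The segment from $G(z_1)$ to $j^*(G(z_1))$ joins $\Omega^*$ to $\Omega$, so it crosses $\gamma$ at some point $\zeta$. Then
\[
|G(z_1)-j^*(G(z_1))| = |G(z_1)-\zeta|+|\zeta-j^*(G(z_1))|\ge 2\delta(z_1).
\]
Therefore
\[
\frac{2d(\gamma,G(z_1))}{|G(z_1)-j^*(G(z_1))|}\le \frac{d(\gamma,G(z_1))}{\delta(z_1)}=\max\Bigl(1,\frac{d(\gamma,G(z_1))}{d(\gamma,j^*(G(z_1)))}\Bigr).
\]
By the preceding lemma, $d(\gamma,j^*(G(z_1)))/d(\gamma,G(z_1))\to 1$, so the right-hand side tends to $1$. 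Combining the two bounds gives the limit $1$.

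The only delicate point is the crossing argument in the lower bound. It needs $G(z_1)$ and $j^*(G(z_1))$ to lie on opposite sides of $\gamma$. This holds because $z_1\in\Delta^*$ gives $G(z_1)\in\Omega^*$, while $1/\bar z_1\in A_{r,1}$ is mapped by $G$ into $\Omega$ once $r$ is close to $1$, since $G$ is a homeomorphism on $\hat{\C}-\Delta_r$ fixing the orientation of the sides. Everything else is a direct transcription of the $F$ case.
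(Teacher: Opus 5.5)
Your proof is correct and follows the same approach as the paper, which proves this lemma only by appeal to ``similar arguments,'' namely the proof of Lemma~\ref{lem: diff and dist com for F}: the triangle inequality through the nearest point $\alpha$ for one bound, and the comparison with $\delta(z_1)$ together with the preceding lemma for the other. You also justify $|G(z_1)-j^*(G(z_1))|\ge 2\delta(z_1)$ by the crossing argument, which the paper leaves implicit; here $G(1/\bar z_1)\in\Omega$ follows simply from injectivity of $G$ on $\hat{\C}-\Delta_r$ and $G(\overline{\Delta^*})=\overline{\Omega^*}$, so neither an orientation argument nor a further shrinking of $r$ is needed.
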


\begin{proposition}
    The hyperbolic metric and quasihyperbolic metric on $\Omega^*$ are asymptotically equal at the boundary of $\Omega^*$. Formally,
    \begin{align*}
        \lim_{w\to \partial \Omega^*} \rho_{\Omega^*}(w)d(w,\gamma)^2 = 1.
    \end{align*}
\end{proposition}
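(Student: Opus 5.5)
The plan is to mirror the proof of Theorem \ref{thm: hyp and quasi hyp}, with $G$ and $j^*$ in place of $F$ and $j$. Write $w = G(z_1)$ with $z_1 \in \Delta^*$ near $\partial\Delta^*$. Since $\gamma$ is compact and $G$ is an embedding on $A_{r,1/r}$, there is a neighborhood of $\gamma$ contained in $G(A_{r,1/r})$. Hence every sequence $w \to \partial\Omega^*$ eventually has the form $G(z_1)$ with $|z_1| \to 1^+$. Note that $\partial\Omega^* = \gamma$, because $\Omega^*$ is the complement of $\bar\Omega$ in $\hat{\C}$ and $d$ is measured only to the bounded set $\gamma$; so $w \to \infty$ is not an issue.

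The key step is the factorization, for $w$ near $\gamma$ in $\Omega^*$,
\begin{align*}
\rho_{\Omega^*}(w)\,d(w,\gamma)^2 = \frac{|w-j^*(w)|^2\rho_{\Omega^*}(w)}{4}\cdot\frac{4\,d(w,\gamma)^2}{|w-j^*(w)|^2}.
\end{align*}
I will handle each factor with one of the stated lemmas. By Lemma \ref{lem: metric and diff com for G}, the first factor tends to $1$. By Lemma \ref{lem: diff and dist com for G}, the second factor tends to $1$. This already closes the argument. Here $\rho_{\Omega^*}$ is read through the density of the metric, as in Theorem \ref{thm: hyp and quasi hyp}, so that $\rho_{\Omega^*}(w)d(w,\gamma)^2 = \rho_{\Omega^*}/\sigma_{\Omega^*}$.

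The part that needs justification is the $G$-lemmas themselves, since the paper only says they follow ``using similar arguments''.

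For Lemma \ref{lem: metric and diff com for G}, I would compute with the osculating Möbius formula for $g$. Use $\rho_{\Omega^*}(g(z_1))^{1/2} = 2/(|g'(z_1)|(|z_1|^2-1))$ and $j^*(G(z_1)) = Mg(z_1, 1/\bar z_1)$. This gives
\begin{align*}
\frac{|G(z_1)-j^*(G(z_1))|\,\rho_{\Omega^*}(G(z_1))^{1/2}}{2} = \frac{|1/\bar z_1 - z_1|}{(|z_1|^2-1)\,\bigl|1-Ng(z_1)\tfrac{1/\bar z_1 - z_1}{2}\bigr|}.
\end{align*}
The first ratio equals $1/|z_1|$, which tends to $1$. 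The term $Ng(z_1)(1/\bar z_1 - z_1)$ has modulus comparable to $|Ng(z_1)|(|z_1|^2-1)$, which tends to $0$ by Lemma \ref{lem: asy ppty for g}(1). So the whole expression tends to $1$.

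For the distance lemmas, I would repeat the proof of Lemma \ref{lem: comparison for F}. Integrate $j^*$ along the segment from $j^*(w)$, or from $w$, to its nearest point of $\gamma$. Then show $|j^*_{\bar z}| \to 1$ and $|j^*_z/j^*_{\bar z}| = |\mu_G| \to 0$ near $\gamma$. The second fact uses $|\mu_G(z)| = 2\|Sg(1/\bar z)\|_{\Delta^*}$ together with Lemma \ref{lem: asy ppty for g}(2). Lemma \ref{lem: diff and dist com for G} then follows by the triangle-inequality argument of Lemma \ref{lem: diff and dist com for F}.

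The main obstacle is the derivative computation $|j^*_{\bar z}| \to 1$. It needs the analogue of the formula for $\partial_z F$, namely $\partial_z G(z) = g'(1/\bar z)/(1 - Ng(1/\bar z)(z-1/\bar z)/2)^2$ on $A_{r,1}$. It also needs a check that the segments used in the integration stay inside $G(A_{r,1/r})$, which holds once $w$ is close enough to $\gamma$. The corrections in these computations are all controlled by Lemma \ref{lem: asy ppty for g}.
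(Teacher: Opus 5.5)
Your proposal is correct and is essentially the argument the paper intends. The paper gives no separate proof and leaves the proposition as a direct consequence of Lemmas \ref{lem: metric and diff com for G} and \ref{lem: diff and dist com for G}, whose proofs it defers to ``similar arguments''. Because Lemma \ref{lem: diff and dist com for G} is phrased with $d(\gamma,G(z_1))$ rather than the distance of the reflected point, your two-factor splitting suffices, and the third factor used in the proof of Theorem \ref{thm: hyp and quasi hyp} is not needed. Your sketches of the $G$-lemmas, via the osculating formula for $Mg$, the formula for $\partial_z G$, and Lemma \ref{lem: asy ppty for g}, are exactly those similar arguments.
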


\begin{proof}[Proof of Theorem \ref{thm: EN converges}]
    Note that
    \begin{align*}
        &EN(F(z))/16  \\
        &= |F(z)-j(F(z))|^4\rho_\Omega(F(z)) \rho_{\Omega^*}(j(F(z)))/16 \\
        &= \frac{|F(z) - j(F(z))|^2\rho_\Omega(F(z))}{4}
        \frac{|j^*(j(F(z)))-j(F(z))|^2\rho_{\Omega^*}(j(F(z)))}{4}\frac{|F(z) - j(F(z))|^2}{|j^*(j(F(z)))-j(F(z))|^2}. 
    \end{align*}
    By Lemma \ref{lem: metric and diff com for F}, the first term converges to $1$ as $z\to \partial \Delta$. 
    
    For the second term, let $z_1 = G^{-1}(j(F(z)))$. Then, by Lemma \ref{lem: metric and diff com for G},
    \begin{align*}
        \frac{|j^*(j(F(z))-j(F(z))|\rho_{\Omega^*}(j(F(z)))^{1/2}}{2} = \frac{|j^*(G(z_1))-G(z_1)|\rho_{\Omega^*}(G(z_1))^{1/2}}{2}
    \end{align*}
    converges to $1$ as $z\to \partial \Delta$ since $G^{-1}\circ j \circ F$ is continuous.

    For the third term,
    \begin{align*}
        \frac{|F(z) - j(F(z))|}{|j^*(j(F(z))-j(F(z))|} 
        = \frac{|F(z) - j(F(z))|}{2d(\gamma,j(F(z)))}\frac{2d(\gamma,G(z_1))}{|j^*(G(z_1)) - G(z_1)|}
    \end{align*}
    which converges to $1$ as $z\to \partial \Delta$ by Lemmas \ref{lem: diff and dist com for F} and \ref{lem: diff and dist com for G}.

    As $F$ is continuous, $EN(w)/16 \to 1$ as $w\to \partial \Omega$.
\end{proof}

\section{Epstein Surfaces in $\HH^3$} \label{sec: Ep}
In this paper, we give an explicit description of Epstein surfaces in $\HH^3$ and state the required properties for the remaining sections. For a general discussion on Epstein surfaces, see \cite{epstein-envelopes}. 

Let $\mathsf{U}\HH^3$ be the unit tangent bundle of $\HH^3$. For $x\in \HH^3$, the visual metric $\nu_x$ is the pushforward of the round metric on $\mathsf{U}_x\HH^3$ to $\hat{\C}$ via the hyperbolic Gauss map. Geometrically, this means the metric measures the boundary from the viewpoint of an observer inside the hyperbolic space. As the basepoint $x$ moves closer to a region on the boundary, that region takes up a larger field of view, resulting in an increase in the conformal factor near that region.

Alternatively, the visual metric $\nu_x$ on $S^2 = \partial \HH^3$ is defined as follows:
for all $z\in S^2$, consider the geodesic $g$ that passes through $x$ and has endpoint $z$. Consider the hyperbolic plane $P$ in $\HH^3$ that contains $x$ and is perpendicular to $g$. The plane $P$ limits to a circle in $S^2$. Equip the disk $D$, bounded by the circle and containing $z$, with the hyperbolic metric $\rho_D$. Then, $\nu_x(z) = \rho_D(z).$

Let $\rho:\Omega\to \R_{+}$ be a smooth function on a domain $\Omega\subset \C$. Note that for each $w\in \Omega$, the set $H_\rho(w) = \{x\in \HH^3|\nu_x(w) = \rho(w)\}$ is a horosphere. Furthermore, in the upper half-space model $\{(x,y,\xi)\in \R^3 \mid \xi > 0 \}$, $H_\rho(w)$ is in fact a Euclidean sphere of radius $\frac{1}{\sqrt{\rho(w)}}$ in $\HH^3$ tangent to the $xy-$plane at $w$. 

The Epstein surface with respect to $\rho$ is the surface $\Sigma_\rho$ in $\HH^3$ that is the envelope for the horospheres $\{H_\rho(w)\}_{w\in \Omega}$. The following proposition summarizes the properties of a general Epstein surface.
\begin{proposition}\label{prop: properties of general Ep}
    (See \cite{epstein-envelopes})
    There exists a unique smooth map $\widetilde{\Ep}_\rho$ from $\Omega$ into $\mathsf{U}\HH^3$, such that
    \begin{enumerate}
        \item $\widetilde{\Ep}_\rho(w)$ is an inward-pointing normal to $H_\rho(w)$,
        \item if $\pi:\mathsf{U}\HH^3 \to \HH^3$ is the projection $v\mapsto p$ for $v\in \mathsf{U}_p\HH^3$, then $\Ep_\rho = \pi\circ \widetilde{\Ep}_\rho$ is a map from $\Omega$ onto $\Sigma_\rho$,
        \item $\Ep_\rho(w)$ only depends on $\rho$ and its derivative at $w$.
    \end{enumerate}
    Moreover, if we let $\rho_t = e^{2t}\rho$,
    \begin{itemize}
        \item for $\g_t$ the geodesic flow on $\mathsf{U}\HH^3$, $\widetilde{\Ep}_{\rho_t}(w) = \g_{t}(\widetilde{\Ep}_\rho(w))$,
        \item for $\g_{\pm\infty}(v) = \lim_{t\to \pm\infty}\g_t(v) \in S^2$, $\g_{\infty}(\widetilde{\Ep}_\rho(w)) = w.$ 
    \end{itemize}
\end{proposition}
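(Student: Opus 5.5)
The plan is to work in the upper half-space model and compute the envelope directly, using $\PSL_2(\C)$-equivariance to reduce to a normalized situation.

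\textbf{Step 1: equivariance.} First I would record that the visual metric is natural: for any $A\in\PSL_2(\C)$, acting by isometries on $\HH^3$ and by M\"obius maps on $\hat{\C}$, we have $\nu_{A(x)} = (A^{-1})^*\nu_x$. Consequently $A(H_\rho(w)) = H_{(A^{-1})^*\rho}(A(w))$, and the envelope construction commutes with $A$. This lets me assume, when studying a fixed point $w_0$, that $w_0=0$. Post-composing with a dilation that fixes $0$, I may also assume $\rho(0)=1$.

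\textbf{Step 2: the envelope condition.} A point $x=(y,\xi)$ with $y\in\C$, $\xi>0$ lies on the Euclidean sphere tangent to $\C$ at $w$ with the paper's radius convention exactly when a real equation $\Phi(x,w)=0$ holds. Explicitly, $\Phi(x,w) = |y-w|^2+\xi^2 - c\,\xi/\sqrt{\rho(w)}$, where $c$ is the normalizing constant fixed by the radius convention. The envelope point over $w$ must satisfy
\[ \Phi(x,w)=0, \qquad \partial_{w}\Phi(x,w)=0. \]
The second condition is a complex equation, i.e.\ two real equations. It is affine-linear in $(y,\xi)$:
\[ -(\bar y-\bar w) + \tfrac{c}{2}\,\xi\,\partial_w\bigl(\rho(w)^{-1/2}\bigr)\,\rho(w)^{1/2}\cdot\rho(w)^{-1/2}\cdot(\ldots) = 0, \]
and in substance it says that $y-w$ is a fixed real multiple of $\xi\,\overline{\partial_w\log\rho(w)}$. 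So it defines a line through the tangency point $w$. This line meets the sphere in exactly two points, namely $w$ itself, which is not in $\HH^3$, and one further point. That further point is $\Ep_\rho(w)$. The point, and the inward normal to $H_\rho(w)$ there, which I take as $\widetilde{\Ep}_\rho(w)$, depend only on $\rho(w)$ and $\partial_w\rho(w)$. This gives uniqueness, smoothness, and properties (1)--(3).

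\textbf{Step 3: the flow.} Replacing $\rho$ by $\rho_t=e^{2t}\rho$ leaves $\partial_w\log\rho$ unchanged, so the line from Step 2 is the same. Meanwhile the horosphere becomes the concentric (in the horospherical sense) horosphere at signed distance $t$ inward. The inward normal geodesic from $\Ep_\rho(w)$ on $H_\rho(w)$ is a geodesic ending at $w$, and it meets $H_{\rho_t}(w)$ orthogonally at hyperbolic distance $t$. I would check that this meeting point lies on the same line; in the normalized case this is a short computation with circles through $0$. Granting that, $\widetilde{\Ep}_{\rho_t}(w)=\g_t(\widetilde{\Ep}_\rho(w))$. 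Letting $t\to\infty$, the horospheres shrink to $w$, so $\g_\infty(\widetilde{\Ep}_\rho(w))=w$.

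\textbf{Main obstacle.} The delicate point is Step 2 together with the Step 3 check: fixing the radius constant and verifying that the second intersection point of the line with the sphere indeed lies on the inward normal geodesic through the previous envelope point. I would first carry this out with $w=0$ and $\rho(0)=1$, where everything reduces to elementary Euclidean geometry of circles through the origin.
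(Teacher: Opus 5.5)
Your Steps 1 and 2 are fine. With the paper's radius convention one has $c=2$, and the condition $\partial_w\Phi=0$ reads $y-w=\xi\,\rho(w)^{-1/2}\,\partial_{\bar w}\log\rho(w)$. Intersecting this line with the sphere reproduces the explicit formula the paper quotes in its Appendix from \cite{BBVW_universal}, and this gives uniqueness, smoothness and (1)--(3). The display in Step 2 is garbled, but the conclusion you draw from it is correct. Note also that the paper does not prove this proposition; it cites Epstein.

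The genuine gap is in Step 3. Your assertion that the line is unchanged under $\rho\mapsto\rho_t=e^{2t}\rho$ is false. The ``fixed real multiple'' from Step 2 is $\rho(w)^{-1/2}$, which becomes $e^{-t}\rho(w)^{-1/2}$, so the slope of the line through $(w,0)$ is multiplied by $e^{-t}$. Only $\partial_w\log\rho$ is invariant, and that is not enough. In fact the points $\Ep_{\rho_t}(w)$, $t\in\R$, sweep out the semicircle orthogonal to $\C$ with endpoints $w$ and $w+2/\partial_w\log\rho(w)$. This is the geodesic, and that endpoint is exactly what the paper uses in the Appendix and in Section~\ref{sec: global bound}. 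A line through $w$ meets that circle in at most one further point. So the check you plan, that the point where the normal geodesic meets $H_{\rho_t}(w)$ lies on the original line, must fail whenever $\partial_w\log\rho(w)\neq 0$ and $t\neq 0$.

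The fix is to use your Step 1 more fully. Besides $w=0$, pull back by a M\"{o}bius map fixing $0$. Pulling back by $z\mapsto z/(1+bz)$ replaces $\partial_w\log\rho(0)$ by $\partial_w\log\rho(0)-2b$ and leaves $\rho(0)$ unchanged, so you can arrange $\partial_w\log\rho(0)=0$. Since pullback commutes with multiplication by $e^{2t}$, this normalization holds for all $t$ simultaneously. Then:
\begin{itemize}
\item the line is the vertical axis for every $t$;
\item $\Ep_{\rho_t}(0)=(0,2e^{-t}\rho(0)^{-1/2})$;
\item these points lie on the vertical geodesic, which is the inward normal geodesic to $H_\rho(0)$, at signed distance $t$ from $\Ep_\rho(0)$.
\end{itemize}
This is exactly $\widetilde{\Ep}_{\rho_t}(0)=\g_t(\widetilde{\Ep}_\rho(0))$.

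Alternatively, set $a=\rho(w)^{-1/2}\partial_{\bar w}\log\rho(w)$ and $\xi_t=2e^{-t}\rho(w)^{-1/2}/(1+e^{-2t}|a|^2)$. You can then check directly that the points $(w+e^{-t}a\,\xi_t,\xi_t)$ from your formula satisfy the equation of the semicircle above.

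Finally, $\g_\infty(\widetilde{\Ep}_\rho(w))=w$ needs no flow argument. It follows from (1), since every geodesic orthogonal to a horosphere based at $w$ ends at $w$.
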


Note that $\PSL(2,\C)$ acts on $\HH^3$ by isometries. The following property relates the action of $\PSL(2,\C)$ and Epstein surfaces.
\begin{proposition}\label{prop: naturality of Ep}
    (See \cite{epstein-envelopes}) For all $w\in \Omega$, $M\in \PSL(2,\C)$, $$\Ep_\rho(w) = M(\Ep_{M^*\rho}(M^{-1}(w))).$$
\end{proposition}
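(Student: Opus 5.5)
The plan is to reduce the statement to the naturality of the visual metric under isometries, and then to check that the envelope construction is compatible with it. Throughout, a conformal metric is written as a positive function times $|dw|^2$, so the pullback is $(M^*\rho)(p) = \rho(M(p))\,|M'(p)|^2$.

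\emph{Step 1: naturality of the visual metric.} I will show that for $M\in\PSL(2,\C)$ and $x\in\HH^3$,
\begin{align*}
\nu_x(p) = \nu_{Mx}(M(p))\,|M'(p)|^2, \qquad\text{that is,}\qquad \nu_x = M^*\nu_{Mx}.
\end{align*}
I will use the second description of $\nu_x$ given above.
\begin{itemize}
\item $M$ is an isometry of $\HH^3$ whose boundary action is the M\"obius map $M$.
\item It therefore sends the geodesic through $x$ ending at $p$ to the geodesic through $Mx$ ending at $M(p)$.
\item It sends the totally geodesic plane through $x$ perpendicular to the first geodesic to the plane through $Mx$ perpendicular to the second.
\item It sends the disk $D$ bounded by the limiting circle and containing $p$ onto the corresponding disk $M(D)$ containing $M(p)$.
\end{itemize}
Since $M\colon D\to M(D)$ is a conformal bijection, it is an isometry of the hyperbolic metrics, so $\rho_D = M^*\rho_{M(D)}$. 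Evaluating this at $p$ gives the displayed identity.

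\emph{Step 2: horospheres correspond.} Fix $w\in\Omega$ and put $p = M^{-1}(w)$. By Step 1, a point $x$ satisfies $\nu_x(p) = (M^*\rho)(p)$ if and only if
\begin{align*}
\nu_{Mx}(w)\,|M'(p)|^2 = \rho(w)\,|M'(p)|^2 .
\end{align*}
This holds exactly when $Mx\in H_\rho(w)$, so $M\bigl(H_{M^*\rho}(M^{-1}(w))\bigr) = H_\rho(w)$. More precisely, define $\Phi_\rho(x,w) = \log\nu_x(w) - \log\rho(w)$. Then
\begin{align*}
\Phi_{M^*\rho}(x,p) = \Phi_\rho(Mx, M(p)),
\end{align*}
because the factors $|M'(p)|^2$ cancel identically in $p$, and not merely at a single point.

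\emph{Step 3: envelopes correspond.} The envelope point $\Ep_\rho(w)$ is the point $x$ at which both $\Phi_\rho(x,w) = 0$ and $\partial_w\Phi_\rho(x,w) = 0$ (together with $\partial_{\bar w}\Phi_\rho = 0$). This is the standard characterization of the envelope of a family $\{H_\rho(w)\}$. Differentiating the identity from Step 2 in $p$ and applying the chain rule through the holomorphic map $M$ shows the following. The conditions defining $\Ep_{M^*\rho}(p)$, taken at the point $x$, are equivalent to the conditions defining $\Ep_\rho(w)$, taken at the point $Mx$, because $M'(p)\neq 0$. Hence $M\bigl(\Ep_{M^*\rho}(M^{-1}(w))\bigr) = \Ep_\rho(w)$.

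An alternative route goes through the uniqueness statement in Proposition \ref{prop: properties of general Ep}. The map $w\mapsto dM\bigl(\widetilde{\Ep}_{M^*\rho}(M^{-1}(w))\bigr)$ is smooth, and by Step 2 it gives an inward normal to $H_\rho(w)$, since $dM$ preserves inward normals of horospheres. Its footpoints also envelope $\{H_\rho(w)\}$. Uniqueness then forces this map to equal $\widetilde{\Ep}_\rho$, and projecting by $\pi$ gives the claim.

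The main obstacle I expect is Step 3. There I must commit to a precise analytic form of the envelope condition: either the vanishing of $\Phi$ and its first $w$-derivatives, or the uniqueness characterization. The proposition as quoted from \cite{epstein-envelopes} leaves that condition implicit, so I need to fix it precisely enough that the chain-rule transfer is rigorous. Steps 1 and 2 are routine by comparison.
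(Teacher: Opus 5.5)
Your proof is correct. The paper gives no proof of its own, since the statement is quoted from Epstein, and your argument is the standard one. It combines the naturality of the visual metric, $\nu_x = M^*\nu_{Mx}$, with the characterization of $\Ep_\rho(w)$ as the point whose visual metric agrees with $\rho$ to first order at $w$.

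The issue you flag in Step 3 is not a genuine gap. First-order agreement, meaning $\Phi_\rho(x,w)=0$ and $\partial_w\Phi_\rho(x,w)=0$, is the usual analytic form of the envelope condition, and it is equivalent to tangency. Differentiate $\Phi_\rho(\Ep_\rho(w),w)=0$ in $w$ and use that $\ker d_x\Phi_\rho$ is the tangent plane of $H_\rho(w)$; this is legitimate because $d_x\Phi_\rho\neq 0$, as $\log\nu_x(w)$ is essentially a Busemann function in $x$. The result is that $d\Ep_\rho$ is tangent to $H_\rho(w)$ if and only if $d_w\Phi_\rho=0$ at that point.

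The system also has exactly one solution, so the envelope point is well defined. In the upper half-space, $\nu_{(X,\xi)}(w) = 4\xi^2/(\xi^2+|w-X|^2)^2$. Solving the three real equations with $\rho=e^{\varphi}$ gives $\xi = 2e^{-\varphi/2}/(1+|\varphi_{\bar w}|^2e^{-\varphi})$ and $X = w+\xi\varphi_{\bar w}e^{-\varphi/2}$, which is the formula recorded in the paper's Appendix.

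With uniqueness in hand, your transfer is rigorous. The identity $\Phi_{M^*\rho}(x,p)=\Phi_\rho(Mx,M(p))$ holds in $(x,p)$. Since $M$ is holomorphic, $\partial_p\bigl[\Phi_\rho(Mx,M(p))\bigr] = M'(p)\,(\partial_w\Phi_\rho)(Mx,M(p))$, and $M'(p)\neq 0$. Together these show that $x\mapsto Mx$ carries the unique solution for $(M^*\rho,p)$ to the unique solution for $(\rho,M(p))$, which is the claim.

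Two minor remarks remain. First, if $\infty\in M^{-1}(\Omega)$, the formula $(M^*\rho)(p)=\rho(M(p))|M'(p)|^2$ must be read in the chart $1/p$ at that one point. Second, your alternative route through the uniqueness clause of Proposition \ref{prop: properties of general Ep} does not actually bypass Step 3. It still needs $M(\Sigma_{M^*\rho})=\Sigma_\rho$, which is the same invariance of envelopes, so the first-order characterization is the cleaner way to finish.
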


Let $w\in \Omega$ such that there exists an open set $U$ containing $w$ on which $\Ep_\rho|_U$ is an embedding. Let $(x_1,x_2)$ be local coordinates for $U$ and $X_i = d\Ep_\rho(\partial x_i)$.
The first and second fundamental forms of $\Ep_\rho(U)$ are $$g_{ij} = \langle X_i,X_j\rangle, \qquad \Pi_j^i = g^{ik}\Pi_{kj}$$ respectively, where $\Pi_{kj} = \langle \nabla_{X_k}X_j,\widetilde{\Ep}_\rho\rangle$. The principal curvatures $k_1,k_2$ at $\Ep_\rho(w)$ are the eigenvalues of $\Pi_j^i$ at $\Ep_\rho(w)$. Since principal curvatures are invariant under change of basis, by choosing an orthonormal basis, the second fundamental form is symmetric and hence the principal curvatures are real.

For the case where $\Omega$ is a simply connected domain in $\hat{\C}$, we denote by $\rho_\Omega$ the hyperbolic metric on $\Omega$ and call the corresponding Epstein surface the Epstein-Poincar\'{e} surface. We denote $\Ep_\Omega = \Ep_{\rho_\Omega}$. Let $f:\Delta \to \Omega$ be any biholomorphic map. For such a surface, we have the following properties:
\begin{proposition}
    (See \cite{epstein-envelopes}) For each $z\in \Delta$ with $\|Sf(z)\|_\Delta<1$, $\Ep_{\Omega}$ is an embedding near $f(z)$. The principal curvatures at $\Ep_\Omega(f(z))$ (with respect to the inward-pointing normal to the horoball) are given by 
    \begin{align*}
        k_\pm = \frac{\|Sf(z)\|_\Delta}{\|Sf(z)\|_\Delta\pm 1}.
    \end{align*}
\end{proposition}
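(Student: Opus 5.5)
By naturality (Proposition \ref{prop: naturality of Ep}) together with the Möbius invariance of the Schwarzian, I would reduce to the case where $z=0$ and $f$ is normalized so that $f(0)=0$ and $f'(0)=1$. Further post-composing $f$ with a Möbius map that fixes $0$ and has derivative $1$ there (of the form $w\mapsto w/(1+cw)$), I can also arrange $f''(0)=0$. Then $f(z)=z+\frac{a}{6}z^3+O(z^4)$ with $Sf(0)=a\,dz^2$, and $\|Sf(0)\|_\Delta=|a|/4$. Pulling back by $f$ turns the computation into one for the metric $\rho=(f^{-1})^*\rho_\Delta$ near $0$. Since $\Ep_\rho(w)$ depends only on the $1$-jet of $\rho$, while the fundamental forms depend on the $2$- and $3$-jets, I only need $\rho$ to third order at $0$.

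Next I would write the Epstein map explicitly in the upper half-space model. The horosphere $H_\rho(w)$ is the Euclidean sphere of diameter $\rho(w)^{-1/2}$ tangent at $w$. For $\rho=e^{2\varphi}|dw|^2$, the envelope point is the standard formula
\[
\Ep_\rho(w)=\Bigl(w+\frac{2\varphi_{\bar w}}{e^{2\varphi}+4|\varphi_w|^2}\,,\ \frac{e^{\varphi}}{e^{2\varphi}+4|\varphi_w|^2}\Bigr)\cdot(\text{normalization}).
\]
I would derive this by solving the envelope condition $\partial_w$ of the horosphere equation equal to $0$. From it one checks $\widetilde{\Ep}_\rho$, the unit normal pointing into the horoball, whose forward geodesic limit is $w$, as required by Proposition \ref{prop: properties of general Ep}.

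For the curvature I would use the normal flow rather than differentiating the formula twice. By Proposition \ref{prop: properties of general Ep}, $\Ep_{e^{2t}\rho}$ is the time-$t$ normal flow of $\Ep_\rho$. In constant curvature $-1$, parallel surfaces have principal curvatures $k_i(t)=\frac{k_i+\tanh t}{1+k_i\tanh t}$. So it suffices to compute the surface for one convenient $t$, or to compute the shape operator as the derivative of the normal map. A cleaner route is to identify the surface as the envelope of horospheres whose Gauss map to the boundary is $w\mapsto w$, and whose ``backward'' endpoint map is $w\mapsto \g_{-\infty}(\widetilde{\Ep}_\rho(w))$. For a surface in $\HH^3$, the hyperbolic Gauss maps $G_\pm$ at the two ends determine the principal curvatures through the eigenvalues of $dG_-\circ dG_+^{-1}$, suitably measured. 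For the Poincaré metric the backward endpoint map should be exactly $w\mapsto Mf(z,1/\bar z)$ transported, that is, the Ahlfors--Weill reflection. Its Beltrami coefficient has modulus $2\|Sf(z)\|_\Delta$, which is where $\|Sf\|$ enters.

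Concretely, at the normalized point ($\rho$ equal to the Poincaré metric of $\Delta$ plus a third-order perturbation in $a$), I would compute $X_1,X_2$ and $\Pi_{ij}$ at $w=0$ directly. The unperturbed disk gives the totally geodesic plane over the unit circle (principal curvatures $0$) at height $1$. The $a$-term contributes a symmetric traceless perturbation to $\Pi$ of size proportional to $|a|$, and a matching perturbation to $g$. Solving $\det(\Pi-kg)=0$ then gives $k/(1-k)=\pm\|Sf\|$ up to sign convention, i.e.\ $k_\pm=\frac{s}{s\pm1}$ with $s=\|Sf(0)\|_\Delta$. The embedding claim follows because $g$ is nondegenerate exactly when $s\neq1$: $\det g\propto(1-s^2)$. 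The main obstacle is this second-order jet computation, and in particular tracking the exact normalizing constants so that the coefficient comes out as exactly $\|Sf\|_\Delta$ rather than a multiple of it. I would check that coefficient against the known Beltrami coefficient $2\|Sf\|$ of the Ahlfors--Weill extension as a consistency test.
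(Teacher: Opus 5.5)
\textbf{Verdict: right strategy, but the decisive computation is missing and mispredicted.} The paper does not prove this proposition; it quotes Epstein, who gets it from an explicit computation of the fundamental forms of the envelope. Your plan is the right one and does go through: normalize by M\"{o}bius maps to $z=0$, $f(z)=z+\frac a6z^3+O(z^4)$, $s=|a|/4$, then compute $g$ and $\Pi$ exactly at the single point $\Ep_\Omega(0)$. (Only the $2$-jet of $\rho_\Omega$, i.e.\ the $3$-jet of $f$, is needed.)

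The gap is that the proposal stops exactly where the content of the statement begins. Two things you assert about the computation are also false in ways that would change the answer.

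First, $H_\rho(w)$ has \emph{radius} $\rho(w)^{-1/2}$ (Section~\ref{sec: Ep}), not diameter. With your convention you would be computing $\Ep_{4\rho}$, the normal flow of $\Ep_\Omega$ by time $\log 2$. For the disk this is the equidistant surface bounded by $S^1$ through $(0,\frac12)$, with principal curvatures $-\frac35$, not the totally geodesic plane at height $1$ you expect. Your displayed formula fits neither convention; for $\rho=e^{2\varphi}|dw|^2$ the envelope is
\[
\Ep_\rho(w)=(W,\xi),\qquad \xi=\frac{2e^{\varphi}}{e^{2\varphi}+4|\varphi_w|^2},\qquad W=w+\frac{4\varphi_{\bar w}}{e^{2\varphi}+4|\varphi_w|^2}.
\]
Second, $\Pi$ is not traceless. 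Besides the part linear in $a$ it has a trace part quadratic in $s$, and that part is exactly what produces $s/(s\pm1)$.

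\textbf{The computation done correctly.} Here $\varphi=\log 2+|w|^2-\tfrac12\mathrm{Re}(aw^2)+O(|w|^3)$. Hence $\Ep_\Omega(0)=(0,1)$, $d\xi(0)=0$ and $dW(0)=2\bigl(dw-\tfrac{\bar a}{4}\,d\bar w\bigr)$. Rotate so that $a=4s\ge 0$ and write $w=x+iy$; then
\[
g=4(1-s)^2dx^2+4(1+s)^2dy^2,\qquad \xi-1=-2(1-s)(1-2s)x^2-2(1+s)(1+2s)y^2+O(|w|^3).
\]
The inward normal at $(0,1)$ is $-\partial_\xi$. So the conformal change formula for second fundamental forms gives $\Pi=-\mathrm{Hess}\,\xi-g_{\mathrm{euc}}$ there, with the Hessian taken in the surface coordinates $X=2(1-s)x$, $Y=2(1+s)y$. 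Therefore
\[
\Pi=-4s(1-s)\,dx^2+4s(1+s)\,dy^2=4s^2|dw|^2-4s\,(dx^2-dy^2),
\]
and $g^{-1}\Pi$ has eigenvalues $\frac{s}{s-1}$ and $\frac{s}{s+1}$. If you drop the trace term $4s^2|dw|^2$ you get $-s/(1-s)^2$ and $s/(1+s)^2$ instead. Moreover $\det g=16(1-s^2)^2$, not a multiple of $1-s^2$. So $d\Ep_\Omega$ is injective at $f(z)$ whenever $s\neq1$, and an immersion is a local embedding.

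\textbf{On the Gauss-map route.} As you state it, this is only a one-equation check. In visual units $dG_\pm\sim I\mp B$, so $dG_-\circ dG_+^{-1}$ has eigenvalues $\frac{1+k_i}{1-k_i}$. The Beltrami coefficient $2s$ of the reflection fixes only the ratio of these eigenvalues. To recover both curvatures you also need the size of $dj$ in the visual metric of $x=\Ep_\Omega(w)$. This could come, for example, from $|j_{\bar w}|=\rho_\Omega(w)|w-j(w)|^2/4$ together with $\nu_x(w)\,\nu_x(j(w))=16|w-j(w)|^{-4}$. That route also requires knowing the immersion first.
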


For the remainder of this section, we will prove some properties of the curves on the Epstein-Poincar\'{e} surface.
\begin{lemma} \label{lem: bound on length and curv}
    Let $\gamma$ be an asymptotically conformal curve. For all $\frac{11-\sqrt{89}}{8}>K>0$, for all $p\in S^1$, there exists an open set $U_p \in \overline\Delta$ such that for all $x\neq y\in U_p-S^1$, there exists a smooth path $\tau$ from an open interval $I$ containing $0$ and $l$ to $\Ep_{\Omega}(\Omega)$ satisfying $\tau(0) = \Ep_{\Omega}(f(x))$, $\tau(l) = \Ep_{\Omega}(f(y))$, $\|\bar{\nabla}_{\tau'}\tau'\| \leq \frac{5K}{2}$ and $\|\tau'\| \geq 1-K $ where $\bar{\nabla}$ is the Levi-Civita connection on the Epstein surface induced from the hyperbolic metric on $\HH^3$.
\end{lemma}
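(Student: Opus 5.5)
The plan is to take $\tau$ to be the image under $\Ep_\Omega\circ f$ of the hyperbolic geodesic segment in $\Delta$ from $x$ to $y$, parametrized by hyperbolic arclength in $\Delta$. The curvature of $\tau$ should then be controlled by $\|Sf\|_\Delta$ and its first derivative, both small near $S^1$.

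\emph{Step 1: a local model via naturality.} By Proposition \ref{prop: naturality of Ep} applied to the osculating M\"obius transformation $M=Mf(z,\cdot)$, we get $\Ep_\Omega(f(z)) = M(\Ep_{M^*\rho_\Omega}(z))$. The metric $M^*\rho_\Omega$ agrees with $\rho_\Delta$ at $z$ to first order up to an error governed by $Sf(z)$, because $f$ and $M$ agree to second order at $z$. For $\rho_\Delta$ itself, $\Ep_\Delta$ is an isometry from $(\Delta,\rho_\Delta)$ onto a totally geodesic plane; the horospheres of Euclidean radius $(1-|z|^2)/2$ envelope a hemisphere. Differentiating, I expect the first fundamental form of $\Ep_\Omega\circ f$ at $z$ to have the form $\rho_\Delta(z)\,|dz + \overline{q(z)}\,d\bar z|^2$ with $|q(z)| = \|Sf(z)\|_\Delta$. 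This is consistent with the principal curvature formula $k_\pm = \|Sf\|/(\|Sf\|\pm1)$ quoted above. Consequently, a unit-speed hyperbolic curve in $\Delta$ is mapped to a curve of speed between $1-\|Sf(z)\|_\Delta$ and $1+\|Sf(z)\|_\Delta$.

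\emph{Step 2: choosing $U_p$.} By Proposition \ref{prop: asymptotically conformal curve}(3), given $\varepsilon>0$ there is $s<1$ with $\|Sf(z)\|_\Delta<\varepsilon$ for $|z|>s$. Cauchy estimates on hyperbolic disks of fixed radius then make the hyperbolic covariant derivative of $Sf$ (in the $\rho_\Delta$-norm) small for $|z|$ close to $1$ as well. I take $U_p$ to be the closure in $\overline\Delta$ of a half-plane region bounded by a hyperbolic geodesic close to $p$, chosen so that it lies in $\{|z|>s\}$. Such a region is hyperbolically convex, so the geodesic segment from $x$ to $y$ stays in $\{|z|>s\}$, where all the estimates hold. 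By Step 1, the speed bound $\|\tau'\|\ge 1-\varepsilon\ge 1-K$ follows once $\varepsilon\le K$.

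\emph{Step 3: the acceleration bound, the main obstacle.} Write $\tau'' $ for $\bar\nabla_{\tau'}\tau'$ and expand it using the Christoffel symbols of the induced metric $\rho_\Delta|dz+\bar q\,d\bar z|^2$ relative to those of $\rho_\Delta$. For a $\rho_\Delta$-geodesic $c$, the difference is a bilinear expression in $q$, $\nabla q$ and $c'$, with denominators coming from $1-|q|^2$. Because the parametrization is hyperbolic arclength rather than arclength on the surface, there is an extra tangential term from the varying speed; it is also bounded by $|\nabla q|$ over $(1-|q|)$-type factors. I would first aim for an explicit bound of the form
\[
\|\bar\nabla_{\tau'}\tau'\| \le \frac{a\,\varepsilon + b\,\varepsilon^2}{1-c\,\varepsilon}
\]
with numerical constants $a,b,c$ read off from the computation, where $\varepsilon$ bounds both $\|Sf\|_\Delta$ and its derivative. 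I would then choose $\varepsilon$ in terms of $K$ so that this is at most $5K/2$. I expect the admissible range $K<\frac{11-\sqrt{89}}{8}$ to come out as exactly the condition that the resulting quadratic inequality in $K$ has a solution compatible with $\varepsilon\le K$; the root $(11-\sqrt{89})/8$ of $4K^2-11K+2=0$ suggests this. Carrying out this Christoffel-symbol computation in clean coordinates, for instance the upper half-space model with $\Ep_\Delta$ normalized to the vertical hemisphere, is where I expect the real work to be.

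Finally, I would extend the parameter interval $[0,l]$ slightly to an open interval $I$, which is possible since $x,y$ lie in the open set $U_p\setminus S^1$. Smoothness of $\tau$ follows from the smoothness of $\Ep_\Omega$ as an embedding near $f(z)$ whenever $\|Sf(z)\|_\Delta<1$.
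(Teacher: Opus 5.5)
Your plan matches the paper's proof. You take $\tau=\Ep_\Omega\circ f\circ\phi$ for the unit-speed hyperbolic geodesic $\phi$ from $x$ to $y$, inside a neighborhood of $p$ where $\|Sf\|_\Delta<K$, with a hyperbolic buffer (width $1/2$ in the paper) giving derivative control. The only difference is that the paper cites the proof of Lemma 3.4 of \cite{KB_Labourie} for the bounds $\frac{5K}{2}$ and $1-K$, where you propose to redo the Christoffel-symbol computation; also, your $U_p$ should be the open half-plane together with its ideal arc, not its closure, so that it is open as the statement requires.

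One correction: the threshold $K<\frac{11-\sqrt{89}}{8}$ will not come out of that computation. It is exactly the condition $\frac{5K}{2}+(2K-1)(1-K)<0$ needed later in Lemma \ref{lem: decreasing Eps}. Here it suffices that your acceleration bound tends to $0$ with $\varepsilon$, and you may take $\varepsilon\le K$ as small as you like.
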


\begin{proof}
    Since $\gamma$ is asymptotically conformal, there exists an open ball $B_r(p)$ of radius $r$ centered at $p$, such that for all $z\in B_r(p)\cap \Delta$, the Schwarzian norm $\|Sf(z)\|_{\Delta}<K$. Let $0<r'<r$ such that the hyperbolic distance from $\partial B_r(p) \cap \Delta$ to $B_{r'}(p)\cap \Delta$ is greater than $1/2$. Let $U_p = B_{r'}(p)\cap \overline{\Delta}$.
    
    For $x\neq y\in U_p$, let $\phi:I\to U_p$ be a geodesic arc segment parameterized by unit speed in $\Delta$ with $\phi(0) = x$, $\phi(l) = y$ where $l = d_{\HH^2}(x,y)$. Let $\tau = \Ep_{\Omega}(f(\phi))$. By the proof of Lemma 3.4 of \cite{KB_Labourie}, $\tau$ satisfies $\|\bar{\nabla}_{\tau'}\tau'\| \leq \frac{5K}{2}$ and $\|\tau'\| \geq 1-K$. 
\end{proof}

We consider the upper half space model for $\HH^3$ with hyperbolic metric $\rho_{\HH^3}(x,y,\xi) = \frac{dx^2+dy^2+d\xi^2}{\xi^2}$. Let $\tau(t) = (x(t),y(t),\xi(t))$.

\begin{lemma} \label{lem: decreasing Eps}
    If $\xi'(0) = 0$, then for all $0<s<l$, $\xi'(s)<0$. In particular, $\xi(s)<\xi(0)$ for $0<s<l$.
\end{lemma}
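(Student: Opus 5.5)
The plan is to study the height along $\tau$ through the horofunction $b(s)=\log \xi(s)$. Since $\xi>0$, $\xi'$ and $b'$ have the same sign. I would show that $b$ is strictly concave at every critical point. A first-crossing argument then gives the lemma. Throughout, $\tau$ is the curve from Lemma \ref{lem: bound on length and curv}: $\tau=\Ep_\Omega(f(\phi))$ with $\phi$ a unit-speed hyperbolic geodesic in $U_p$, so $\|\bar\nabla_{\tau'}\tau'\|\le \frac{5K}{2}$ and $\|\tau'\|\ge 1-K$.

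\emph{Step 1 (Hessian of $\log\xi$).} In the upper half-space model, $\log\xi$ has hyperbolic gradient of norm $1$. Its level sets are horospheres, which are umbilic with curvature $1$. This gives
\[
\mathrm{Hess}_{\HH^3}(\log \xi) = -\bigl(g_{\HH^3} - d\log\xi\otimes d\log\xi\bigr).
\]
As a sanity check, the right-hand side vanishes along vertical geodesics and makes $\log\xi$ concave along semicircles. For a curve in $\HH^3$ with ambient acceleration $A=\nabla^{\HH^3}_{\tau'}\tau'$ we then get
\[
b''=-\bigl(\|\tau'\|^2-(b')^2\bigr)+\langle \nabla \log\xi, A\rangle .
\]

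\emph{Step 2 (bounding the ambient acceleration).} The hypothesis of Lemma \ref{lem: bound on length and curv} controls only the intrinsic acceleration $\bar\nabla_{\tau'}\tau'$ on the Epstein surface. I would decompose
\[
A=\bar\nabla_{\tau'}\tau' + \Pi(\tau',\tau')\,\nu ,
\]
where $\nu$ is the unit normal. Since $\|Sf\|_\Delta<K<1$ on $B_r(p)\cap\Delta$, Epstein's formula gives principal curvatures $|k_\pm|\le \frac{K}{1-K}$, so $|\Pi(\tau',\tau')|\le \frac{K}{1-K}\|\tau'\|^2$.

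I also need an upper bound on $\|\tau'\|$. I would take it from the same computation as in \cite{KB_Labourie}: the first fundamental form of the Epstein--Poincar\'e surface, written in terms of $\rho_\Delta$ and $Sf$, shows the speed along a unit-speed geodesic lies between $1-K$ and $1+K$ (or a comparable bound). The main obstacle is making this numerical bookkeeping work. At a critical point ($b'=0$) I want
\[
b''\le -\|\tau'\|^2 + \tfrac{5K}{2} + \tfrac{K}{1-K}\|\tau'\|^2<0 .
\]
Using $\|\tau'\|\ge 1-K$, this amounts to checking that $(1-K)^2\bigl(1-\tfrac{K}{1-K}\bigr)-\tfrac{5K}{2}>0$, i.e.\ $(1-K)(1-2K)>\tfrac{5K}{2}$, i.e.\ $4K^2-11K+2>0$. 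Its smaller root is exactly $\frac{11-\sqrt{89}}{8}$. This strongly suggests that this is the intended computation, and that the hypothesis $K<\frac{11-\sqrt{89}}{8}$ in Lemma \ref{lem: bound on length and curv} was chosen precisely to make it work.

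\emph{Step 3 (first-crossing argument).} At $s=0$ we have $b'(0)=0$, so $b''(0)<0$ and hence $b'<0$ on some interval $(0,\epsilon)$. Suppose $b'(s_1)=0$ for some $s_1\in(0,l)$, and take $s_1$ to be the smallest such point. Then $b'<0$ on $(0,s_1)$, but Step 2 gives $b''(s_1)<0$, which forces $b'>0$ just before $s_1$. This is a contradiction, so $b'<0$, and hence $\xi'<0$, on $(0,l)$. Integrating gives $\xi(s)<\xi(0)$ for $0<s<l$.
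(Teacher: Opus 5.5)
Your proposal is correct and follows essentially the same argument as the paper. The paper derives the same identity $h''-h'^2=\langle\nabla_{\tau'}\tau',\partial_h\rangle-\|\tau'\|^2$ (via the Koszul formula in coordinates $(x,y,h)$ with $\xi=e^h$, rather than the Hessian of $\log\xi$), splits the acceleration into the intrinsic part and a normal part bounded by $\frac{K}{1-K}\|\tau'\|^2$, and reaches the same condition $4K^2-11K+2>0$; it then concludes via convexity of $e^{-h}$ instead of your first-zero argument. Note also that no upper bound on $\|\tau'\|$ is needed: the coefficient $\frac{K}{1-K}-1$ of $\|\tau'\|^2$ is negative, so only $\|\tau'\|\ge 1-K$ enters.
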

\begin{proof}
    Consider the diffeomorphism $\varphi:\R^3 \to \HH^3$ given by $(x,y,h) \mapsto (x,y,e^h)$. The pullback of the hyperbolic metric $\rho_{\HH^3}$ via $\varphi$ is given by $\frac{dx^2+dy^2}{e^{2h}} + dh^2$. Let $\lambda(s) = \varphi^{-1}\circ\tau(s) = (x(s),y(s),h(s))$ for all $0<s<l$.

    Let $\partial x = \frac{\partial}{\partial x}$, $\partial y = \frac{\partial}{\partial y}$ and $\partial h = \frac{\partial}{\partial h}$ be the corresponding vector fields in $\R^3$. Recall that the Koszul formula is given by
    \begin{align*}
        2\langle \nabla_XY,Z\rangle = X\langle Y,Z\rangle + Y\langle X,Z\rangle - Z\langle X,Y\rangle + \langle [X,Y],Z\rangle - \langle [Y,Z],X\rangle + \langle [Z,X],Y\rangle
    \end{align*}
    for all $X,Y,Z$ vector fields on $\R^3$. Since the Lie bracket of any two elements in $\{\partial x, \partial y, \partial h\}$ is the zero vector field, we obtain the following computation,    
    \begin{align*}
        \langle X,Y\rangle  &= \begin{cases}
            0 , \text{ if } X\neq Y \in \{\partial x, \partial y,\partial h\},\\
            1, \text{ if } X=Y = \partial h, \\
            e^{-2h}, \text{ if } X=Y \in \{\partial x,\partial y\}.
        \end{cases} \\
        \langle \nabla_X Y,\partial h \rangle &= \begin{cases}
            e^{-2h} , \text{ if } X = Y \in \{\partial x,\partial y\}, \\
            0 , \text{ otherwise,}
        \end{cases}
    \end{align*}
    where $\nabla$ is the Levi-Civita connection on $\HH^3$. Therefore, 
    \begin{align*}
        \langle \nabla_{\lambda'} \lambda',\partial h\rangle = 
        h'' + (x'^2 + y'^2)e^{-2h}.
    \end{align*}
    Note that $\|\lambda'\|^2 = h'^2 + (x'^2 + y'^2)e^{-2h}$ which implies $\langle \nabla_{\lambda'} \lambda',\partial h\rangle = h'' - h'^2 + \|\lambda'\|^2$. On the other hand, if $\bar{\nabla}$ is the Levi-Civita connection on the Epstein surface induced by the hyperbolic metric on $\HH^3$, by splitting the connection into the tangential and normal direction,
    \begin{align*}
        \langle \nabla_{\lambda'}\lambda',\partial h\rangle &= \langle \bar \nabla_{\lambda'}\lambda',\partial h\rangle + \langle \nabla_{\lambda'}\lambda',d\varphi^{-1}(\widetilde{\Ep}_\Omega)\rangle\langle d\varphi^{-1}(\widetilde{\Ep}_\Omega),\partial h\rangle \\
        &\leq  \|\bar \nabla_{\lambda'}\lambda'\| + k_{\max}\|\lambda'\|^2.
    \end{align*}
    where $k_{\max} = \max(|\frac{K}{K\pm 1}|) = \frac{K}{1-K}$ is the maximum of the absolute value of the principal curvatures. Since $k_{max}-1 = \frac{2K-1}{1-K}<0$ and $\varphi$ is an isometry, by Lemma \ref{lem: bound on length and curv}, we have
    \begin{align*}
        h'' -h'^2 \leq \|\bar \nabla_{\lambda'}\lambda'\| + (k_{\max}-1)\|\lambda'\|^2 \leq \frac{5K}{2} + (2K-1)(1-K) < 0.
    \end{align*} 
    The result follows from the next lemma.
    \begin{lemma}
        Suppose $h:(-\epsilon,l+\epsilon) \to \R$ with $h'(0) = 0$ and $h''-h'^2<0$ for all $0< s< l$. Then $h'(s)<0$ for all $0< s< l$.
    \end{lemma}
    
    \begin{proof}
        Consider the function $H(s) = e^{-h(s)}$. Then, $H''(s) = H(s)(-(h''(s)-h'(s)^2)) > 0$. Hence, $H$ is concave up. Since $H'(0) = -h'(0)e^{-h(0)} = 0$, $H'(s) >0$ for all $s\in [0,l]$. Thus, $h'(s) = (-\log(H(s)))' = \frac{-H'(s)}{H(s)}<0$ for all $0< s< l$.  
    \end{proof}
    Since $\xi(s) = e^{h(s)}$, $\xi'(s) = h'(s)e^{h(s)}<0$ for all $0< s< l$. 
\end{proof}

\begin{remark} \label{rmk: bounded by horoballs}
    As a consequence of Lemma \ref{lem: decreasing Eps}, if $\xi'(0) = 0$, for all $s>0$, $\tau(s)$ is below the horoball based at $\infty$ with height $\xi(0)$. By post-composing $\tau$ with M\"{o}bius transformation, we see that for all $s>0$, $\tau(s)$ lies in the same component in the complement of all horoballs that are tangent to the vector $\tau'(0)$. Since the union of all those horoballs contains the hyperbolic plane in $\HH^3$ that contains $\tau(0)$ and is perpendicular to $\tau'(0)$, for all $s>0$, $\tau(s)$ lies in a component bounded by the hyperbolic plane. In particular, $\lim_{s\to \infty}\tau(s)$ is in one of the disks on $\partial \HH^3$ bounded by the hyperbolic plane.
\end{remark}

Recall that we denote the time $t$ geodesic flow starting at $v$ in $\mathsf{U}\HH^3$ by $\g_t(v)$ and $\g_{\pm\infty}(v) = \lim_{t\to \pm\infty}\g_t(v).$
\begin{corollary} \label{cor: flow at infinity are inj}
    For $x\neq y\in U_p-S^1$, $\g_{-\infty}(\widetilde{\Ep_{\Omega}}(f(x))) \neq \g_{-\infty}(\widetilde{\Ep_{\Omega}}(f(y)))$.
\end{corollary}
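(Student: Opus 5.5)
We argue by contradiction via Remark \ref{rmk: bounded by horoballs}. Fix $x\neq y\in U_p-S^1$ and let $\tau=\Ep_\Omega(f(\phi))$ be the path from Lemma \ref{lem: bound on length and curv}, with $\phi$ the unit-speed hyperbolic geodesic from $x$ to $y$. Suppose $\g_{-\infty}(\widetilde{\Ep_{\Omega}}(f(x))) = \g_{-\infty}(\widetilde{\Ep_{\Omega}}(f(y)))=:\zeta$. Since $\g_{\infty}(\widetilde{\Ep}_\Omega(w))=w$ (Proposition \ref{prop: properties of general Ep}) and $f(x)\neq f(y)$, the normals at $\tau(0)$ and $\tau(l)$ lie on two distinct geodesics sharing the backward endpoint $\zeta$. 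By Proposition \ref{prop: naturality of Ep}, applying a M\"obius transformation changes neither the curvature bounds nor the conclusion, so we may normalize $\zeta=\infty$ in the upper half-space model. Then both normals are vertical, pointing downward toward $f(x)$ and $f(y)$. Because the normal at $\tau(0)$ is vertical, the tangent plane there is horizontal, so $\xi'(0)=0$. Lemma \ref{lem: decreasing Eps} then gives $\xi(s)<\xi(0)$ for $0<s\le l$, and in particular $\xi(l)<\xi(0)$.

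Next we run the same argument backwards along the reversed path $\tilde\tau(s)=\tau(l-s)$, from $y$ to $x$. This path is $\Ep_\Omega\circ f$ of the reversed geodesic, so it satisfies the same bounds from Lemma \ref{lem: bound on length and curv}; the proof of Lemma \ref{lem: decreasing Eps} uses only those bounds and the principal curvature bound $k_{\max}$, which are symmetric. The normal at $\tau(l)$ is also vertical, so $\tilde\xi'(0)=0$, and Lemma \ref{lem: decreasing Eps} gives $\xi(0)=\tilde\xi(l)<\tilde\xi(0)=\xi(l)$. This contradicts $\xi(l)<\xi(0)$, so the two backward endpoints are distinct.

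The main point to check is the orientation claim: a vertical normal at $\tau(0)$ forces $\xi'(0)=0$. This holds because $\tau'(0)$ is tangent to the surface and therefore orthogonal to the normal $\widetilde{\Ep}_\Omega(f(x))$, which is vertical. This uses that $\Ep_\Omega$ is an embedding near $f(\phi(s))$, which holds since $\|Sf\|_\Delta<K<1$ on $B_r(p)\cap\Delta$. We also need $\phi$ to stay in the region where $\|Sf\|_\Delta<K$, and this is arranged by the choice of $U_p$ with $r'<r$ in Lemma \ref{lem: bound on length and curv}. Finally, the sign convention is irrelevant: Lemma \ref{lem: decreasing Eps} yields strict decrease in either case, so the contradiction does not depend on which way the normal points.
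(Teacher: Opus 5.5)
Your proof is correct and takes essentially the paper's route: normalize so that a normal is vertical, observe that this forces $\xi'=0$ because the tangent plane is horizontal, and apply Lemma \ref{lem: decreasing Eps}. The paper normalizes only the normal at $f(x)$ and reads off from $\xi'(l)<0$ that the normal at $f(y)$ cannot be vertical; you instead make both normals vertical and get a contradiction by applying the lemma forward and along the reversed path, a symmetric variant that only needs the height comparison $\xi(l)<\xi(0)$.
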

\begin{proof}
    By post-composing with an isometry, we can assume $\widetilde{\Ep_{\Omega}}(f(x)) = (0,0,-1) \in T_{(0,0,1)}\HH^3$. Therefore, $\tau'(0)$ is perpendicular to $(0,0,1)$ which implies $\xi'(0) = 0$. By Lemma \ref{lem: decreasing Eps}, $\xi'(l) <0$ which implies that $\widetilde{\Ep_{\Omega}}(f(y))$ is not vertical. Thus, $\g_{-\infty}(\widetilde{\Ep_{\Omega}}(f(x))) \neq \g_{-\infty}(\widetilde{\Ep_{\Omega}}(f(y)))$.
\end{proof}

\section{Ahlfors--Weill Extension} \label{sec: AW}
In this section, we will use Epstein surfaces to give a proof of Theorem \ref{thm: local homeomorphism of AW extension}. Let $f:\Delta\to \C$ be an injective holomorphic map. 
Recall that the map $F$ we are interested in is given by 
\begin{align*}
    F(z) = \begin{cases}
        f(z) &, z\in \overline{\Delta}, \\
        Mf(\frac{1}{\bar{z}},z) &,z\in \Delta^*
    \end{cases}
\end{align*}
where $Mf(w,z)$ is the osculating M\"{o}bius transformation of $f$ based at $w$ evaluated at $z$. The following lemma establishes the naturality of the normal-flowed Epstein map.
\begin{lemma} \label{lem: naturality of osc}
    Let $\Omega$ be a bounded simply connected domain in $\C$ and $f:\Delta \to \Omega$ a biholomorphic map. Then, for all $z\in \Delta$, $$\pi(\g_t(\widetilde{\Ep}_\Omega(f(z)))) = Mf(z,\pi(\g_t(\widetilde{\Ep}_{\Delta}(z)))).$$ In particular, $\g_{-\infty}(\widetilde{\Ep}_\Omega(f(z))) = Mf(z,\frac{1}{\bar{z}}) = F(\frac{1}{\bar{z}})$.
\end{lemma}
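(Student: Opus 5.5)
The plan is to reduce to the identity map on $\Delta$ by naturality of Epstein surfaces, then compute the normal-flow endpoint for the round disk explicitly.

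\emph{Step 1: the Poincar\'e metrics are tangent to second order at $f(z)$.} Fix $z\in\Delta$ and set $M = Mf(z,\cdot)\in\PSL(2,\C)$. Consider the two conformal metrics $M_*\rho_\Delta$ (defined near $f(z)$ on $M(\Delta)$) and $\rho_\Omega = f_*\rho_\Delta$. Since $f$ and $M$ agree at $z$ to second order, $M^{-1}\circ f$ osculates the identity at $z$ to order two. Write $M^{-1}\circ f(p) = p + O((p-z)^3)$. Then the pullback $(M^{-1}\circ f)^*\rho_\Delta$ agrees with $\rho_\Delta$ at $z$ together with its first derivatives, because the conformal factor involves only $|(M^{-1}f)'|^2$ and the point itself. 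Hence $\rho_\Omega$ and $M_*\rho_\Delta$ agree with their first derivatives at $f(z)$.

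\emph{Step 2: transfer the Epstein data.} By Proposition \ref{prop: properties of general Ep}(3), $\Ep_\rho(w)$ depends only on the $1$-jet of $\rho$ at $w$. The same holds for the lift $\widetilde{\Ep}_\rho(w)$: it is the inward normal to $H_\rho(w)$ at $\Ep_\rho(w)$, and that horosphere is determined by $\rho(w)$. Therefore $\widetilde{\Ep}_\Omega(f(z)) = \widetilde{\Ep}_{M_*\rho_\Delta}(M(z))$. Naturality (Proposition \ref{prop: naturality of Ep}, applied at the level of unit normals, which holds since isometries carry horospheres to horospheres and envelopes to envelopes) gives
\[
\widetilde{\Ep}_{M_*\rho_\Delta}(M(z)) = dM\big(\widetilde{\Ep}_{\Delta}(z)\big).
\]
The geodesic flow commutes with isometries, so $\g_t(\widetilde{\Ep}_\Omega(f(z))) = dM(\g_t(\widetilde{\Ep}_\Delta(z)))$. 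Projecting with $\pi$ yields the first identity, where $M$ acts on $\HH^3$ by the Poincar\'e extension and on $\partial\HH^3$ by $Mf(z,\cdot)$.

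\emph{Step 3: the endpoint for the disk.} Let $t\to-\infty$. Then $\g_{-\infty}(\widetilde{\Ep}_\Omega(f(z))) = Mf(z,\g_{-\infty}(\widetilde{\Ep}_\Delta(z)))$. For $\Omega = \Delta$ the Epstein surface is the totally geodesic plane bounded by $S^1$, and the unit normal at $\Ep_\Delta(z)$ points toward $z$ as $t\to+\infty$. So $\g_{-\infty}$ is the other endpoint of the geodesic through $z$ orthogonal to that plane, namely the reflection of $z$ in $S^1$, which is $1/\bar z$.

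To justify this, I would use symmetry. By $\PSU(1,1)$-equivariance (Proposition \ref{prop: naturality of Ep}), it suffices to treat $z = 0$. There the rotational symmetry forces the normal geodesic to be the vertical axis through $0$ and $\infty = 1/\bar 0$. Equivariance then gives the endpoint $1/\bar z$ in general, since $\PSU(1,1)$ commutes with $z\mapsto 1/\bar z$. Finally, $Mf(z,1/\bar z) = F(1/\bar z)$ by the definition of $F$ on $\Delta^*$, since $1/\overline{(1/\bar z)} = z$.

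\emph{Main obstacle.} The step needing most care is Step 2, specifically the claim that the unit normal $\widetilde{\Ep}$, and not just the point $\Ep$, is determined by the $1$-jet of $\rho$ and is natural under isometries. I would handle it directly from the envelope description: both the point of tangency and the horosphere $H_\rho(w)$ depend only on $\rho(w)$ and $d\rho(w)$. I would also verify carefully the second-order osculation computation of Step 1, checking that the derivative of the conformal factor involves only $(M^{-1}f)''(z)$.
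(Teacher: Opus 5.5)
Your proposal is correct and follows the paper's argument. The $1$-jet dependence of $\widetilde{\Ep}$ identifies $\widetilde{\Ep}_\Omega(f(z))$ with the Epstein normal of the round disk $Mf(z,\Delta)$ at $f(z)$, and M\"{o}bius naturality then transports this back to $\widetilde{\Ep}_\Delta(z)$. The differences are minor: you commute $\g_t$ directly with the isometry instead of passing through $\widetilde{\Ep}_{e^{2t}\rho}$, and you justify $\g_{-\infty}(\widetilde{\Ep}_\Delta(z)) = 1/\bar{z}$ explicitly by the $\PSU(1,1)$ symmetry argument, a step the paper leaves implicit.
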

\begin{proof}
    By Proposition \ref{prop: properties of general Ep} $\widetilde{\Ep}_\Omega(f(z))$ depends only on the $f(z),f'(z)$ and $f''(z)$. Therefore, $\widetilde{\Ep}_\Omega(f(z)) = \widetilde{\Ep}_{Mf(z,\Omega)}(Mf(z,z))$. By Proposition \ref{prop: properties of general Ep}, $$ 
    \pi(\g_t(\widetilde{\Ep}_\Omega(f(z))) = \pi(\g_t(\widetilde{\Ep}_{Mf(z,\Omega)}(Mf(z,z)))) = \pi(\widetilde{\Ep}_{e^{2t}\rho_{Mf(z,\Omega)}}(Mf(z,z))) = \Ep_{e^{2t}\rho_{Mf(z,\Omega)}}(Mf(z,z)).$$
    By Proposition \ref{prop: naturality of Ep},
    \[ \Ep_{e^{2t}\rho_{Mf(z,\Omega)}}(Mf(z,z)) = Mf(z,(\Ep_{e^{2t}\rho_{\Delta}}(z))) = Mf(z,(\pi(\g_t(\widetilde{\Ep}_{\Delta}(z))))).\]
\end{proof}

\begin{lemma} \label{lem: local homeo of F}
    For an asymptotically conformal curve, the map $F$ is a local homeomorphism at each point of $S^1$ in $\C$.
\end{lemma}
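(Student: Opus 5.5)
The plan is to show that, for each $p\in S^1$, $F$ is injective on a neighborhood of $p$ in $\hat{\C}$. Continuity of $F$ across $S^1$ is immediate from the formula, since $Mf(\frac{1}{\bar z},z)\to f(p)$ as $z\to p$ from outside. Once injectivity is known, invariance of domain upgrades $F$ to a homeomorphism from that neighborhood onto its (open) image.

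\textbf{Setup.} Fix $p\in S^1$ and $K<\frac{11-\sqrt{89}}{8}$, and let $U_p$ be as in Lemma \ref{lem: bound on length and curv}. Shrinking if necessary, take a disk $B$ around $p$ with $B\cap\overline{\Delta}\subset U_p$ and with $B$ invariant under $z\mapsto 1/\bar z$; for instance, use a small disk orthogonal to $S^1$.

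\textbf{Injectivity in each region.} On $B\cap\overline{\Delta}$, $F=f$ extends injectively to $\overline\Delta$, because $\gamma$ is Jordan and Carath\'eodory's theorem applies. On $B\cap\Delta^*$, write $z=1/\bar x$ with $x\in U_p-S^1$. Lemma \ref{lem: naturality of osc} gives $F(z)=\g_{-\infty}(\widetilde{\Ep}_\Omega(f(x)))$, so Corollary \ref{cor: flow at infinity are inj} shows that $F$ is injective on $B\cap\Delta^*$.

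\textbf{Mixed case.} It remains to rule out $F(1/\bar x)=f(y)$ with $x\in B\cap\Delta$ and $y\in B\cap\overline\Delta$, and $F(1/\bar x)=f(q)$ with $q\in S^1$. This is the main obstacle. I would use Remark \ref{rmk: bounded by horoballs}. The backward endpoint $\g_{-\infty}(\widetilde{\Ep}_\Omega(f(x)))$ and the forward endpoint $f(x)=\g_{\infty}$ lie on opposite sides of any totally geodesic plane through $\Ep_\Omega(f(x))$ separating them. By the remark, the Epstein surface over $U_p$ stays in the closed half-space bounded by the plane $P_x$ through $\Ep_\Omega(f(x))$ that is perpendicular to the geodesic. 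The ideal boundary of that piece of surface is $f(U_p\cap S^1)$, and the forward endpoints $f(y)=\g_{\infty}(\widetilde{\Ep}_\Omega(f(y)))$ are also controlled. Using the curvature bound $k_{\max}<1$, the forward normal rays from the surface should remain on the $f(x)$-side of $P_x$: the normals are tilted away from the plane, as in Epstein's foliation argument. Hence $f(U_p)$ lies in the closed disk on the $f(x)$-side, while $F(1/\bar x)$ lies in the open opposite disk. This shows $F(1/\bar x)\notin f(U_p\cap\overline\Delta)$.

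An alternative route for the mixed case would be the estimates of Section \ref{sec: Hyp and QuasiHyp}. Lemma \ref{lem: metric and diff com for F} shows that $|F(x)-F(1/\bar x)|$ is comparable to $2(1-|x|^2)|f'(x)|$, and by Koebe this is comparable to the distance from $f(x)$ to $\gamma$. One would then argue that $F(1/\bar x)$ lies outside $\overline\Omega$. I would try the geometric half-space argument first, and fall back on a degree argument if needed: $F$ is orientation-preserving on both sides, with $|\mu_F|<1$ near $S^1$, so a local degree $1$ near $p$ would force injectivity.
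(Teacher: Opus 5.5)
\textbf{There is a genuine gap in the mixed case.} Your treatment of $B\cap\overline\Delta$ and $B\cap\Delta^*$ is the paper's. The mixed case, which you rightly call the crux, rests on a false claim.

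The plane $P_x$ through $\Ep_\Omega(f(x))$ perpendicular to the normal geodesic is the totally geodesic plane \emph{tangent} to the Epstein surface there. Remark~\ref{rmk: bounded by horoballs} is not about that plane: it places $\tau(s)$, $s>0$, on one side of the plane perpendicular to the path's own tangent $\tau'(0)$.

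\begin{itemize}
\item The surface does not stay on one side of $P_x$. Its principal curvatures $\frac{s}{s+1}>0>\frac{s}{s-1}$, with $s=\|Sf(x)\|_\Delta$, have opposite signs whenever $Sf(x)\neq 0$, so the surface crosses its tangent plane.
\item Your conclusion at infinity is also false. By Lemma~\ref{lem: naturality of osc}, $\partial_\infty P_x=Mf(x,S^1)$, and the $f(x)$-side is $Mf(x,\Delta)$. So your claim amounts to $f(U_p)\subset Mf(x,\overline\Delta)$ for all $x\in U_p\cap\Delta$.
\item This fails for an ellipse. There $f$ is analytic across $S^1$, and as $x\to q\in S^1$ the circles $Mf(x,S^1)$ tend to the osculating circle of $\gamma$ at $f(q)$. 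At a non-vertex, $\gamma$ crosses its osculating circle. Hence for $x$ near $q$ some $f(y)$ with $y\in U_p\cap S^1$ lies in $Mf(x,\Delta^*)$, on the same side as $F(1/\bar x)$.
\end{itemize}
So no circle depending only on $x$ can separate $F(1/\bar x)$ from $f(U_p)$.

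Your fallbacks do not close the gap either. Koebe-level comparability $|f(x)-F(1/\bar x)|\asymp d(f(x),\gamma)$ cannot decide which side of $\gamma$ the point $F(1/\bar x)$ lies on. The sharper estimates of Section~\ref{sec: Hyp and QuasiHyp} use $j$, and so presuppose Theorem~\ref{thm: local homeomorphism of AW extension}. A local-degree count needs control of $F(\partial B\cap\Delta^*)$, which is exactly the missing information.

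\textbf{What works is a pairwise argument, as in the paper.} It applies Lemma~\ref{lem: decreasing Eps} along the path joining the two surface points. Given $x\in U_p\cap\Delta$ and $y\in U_p\cap\overline\Delta$, normalize by a M\"obius map so that $\widetilde{\Ep}_\Omega(f(x))$ is vertical at $(0,0,1)$ and $F(1/\bar x)=\g_{-\infty}=\infty$.
\begin{itemize}
\item If $y=x$, the two ends of a geodesic differ.
\item If $y\in\Delta\setminus\{x\}$, the path of Lemma~\ref{lem: bound on length and curv} has $\xi'(0)=0$, so $\xi'<0$ at $\Ep_\Omega(f(y))$. The normal there is therefore not vertical. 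Neither end of its normal geodesic, in particular $f(y)$, is $\infty$.
\item If $y\in S^1$, take an interior point of the geodesic from $x$ to $y$. There $\tau'$ points strictly downward, so the plane perpendicular to $\tau'$ is a hemisphere, and $\tau'$ points into its bounded side. The remark then confines the limit $f(y)$ to a bounded closed disk, so $f(y)\neq\infty$.
\end{itemize}
The paper's case (2) normalizes $f(z)=\infty$ instead, which is symmetric.
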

\begin{proof}
    For each $p \in S^1$, let $U_p$ be the open set in $\overline{\Delta}$ given in Lemma \ref{lem: bound on length and curv} and let 
    $$V_p =\{z\in \C|z\in U_p \text{ or } \frac{1}{\bar{z}}\in U_p\}$$ be the open set (open disk) in $\C$. Since $f:\overline{\Delta}\to \overline{\Omega}$ is a homeomorphism, $F|_{V_p\cap \overline{\Delta}}$ is injective onto its image.

    We will now show that $F|_{V_p\cap \Delta^*}$ is injective. For all $z\neq w\in V_p\cap \Delta^*$,$ \frac{1}{\overline{z}} \neq \frac{1}{\overline{w}}\in U_p-{S^1}$. By Corollary \ref{cor: flow at infinity are inj} and Lemma \ref{lem: naturality of osc},
    \begin{align*}
        F(z) = \g_{-\infty}(\widetilde{\Ep_{\Omega}}(f(\frac{1}{\bar{z}}))) \neq \g_{-\infty}(\widetilde{\Ep_{\Omega}}(f(\frac{1}{\bar{w}}))) = F(w).
    \end{align*}
    To show that $F|_{V_p}$ is injective, for all $z\in U_p$ and $w\in V_p-U_p$, 
    \begin{enumerate}
        \item If $z\in \Delta$ and $z=\frac{1}{\bar{w}}$, then 
    \begin{align*}
        F(z) = f(z) = \g_{\infty}(\widetilde{\Ep_{\Omega}}(f(z))) \neq \g_{-\infty}(\widetilde{\Ep_{\Omega}}(f(\frac{1}{\bar{w}}))) = F(w).
    \end{align*}
        \item If $z\in \Delta$ and $z\neq \frac{1}{\bar{w}}$, by post-composing with a M\"{o}bius transformation, we can assume $\widetilde{\Ep_{\Omega}}(f(z)) = (0,0,1) \in T_{(0,0,1)}\HH^3$. By an argument similar to that in Corollary \ref{cor: flow at infinity are inj}, 
    \begin{align*}
        \infty = f(z) = \g_{\infty}(\widetilde{\Ep_{\Omega}}(f(z))) \neq \g_{-\infty}(\widetilde{\Ep_{\Omega}}(f(\frac{1}{\bar{w}}))).
    \end{align*}
    Therefore, by Lemma \ref{lem: naturality of osc}, $f(z) \neq F(w)$.
        \item If $z\in S^1$, by post-composing with a M\"{o}bius transformation, we can assume $\widetilde{\Ep_{\Omega}}(f(\frac{1}{\bar{w}})) = (0,0,-1) \in T_{(0,0,1)}\HH^3$. By Lemma \ref{lem: naturality of osc}, $F(w) = \g_{-\infty}(\widetilde{\Ep_{\Omega}}(f(\frac{1}{\bar{w}}))) = \infty$. Consider the unit speed parameterized geodesic $\eta$ from $\frac{1}{\bar{w}}$ to $z$ and choose $z'$ distinct from $\frac{1}{\bar{w}}$ and $z$ on $\eta$. Let $\tau = \Ep_\Omega(f(\eta))$. By Corollary \ref{cor: flow at infinity are inj}, we see that
        $$\g_{-\infty}(\widetilde{\Ep_{\Omega}}(f(\frac{1}{\bar{w}}))) \neq \g_{-\infty}(\widetilde{\Ep_{\Omega}}(f(z'))).$$
        By Remark \ref{rmk: bounded by horoballs}, since the derivative $\tau'$ based at $\Ep_\Omega(f(z'))$ is not horizontal, the limit of $\tau$ which is $z$ lies in the disk bounded by the hyperbolic plane perpendicular to the derivative. Hence, $z\neq \infty = F(w)$.
    \end{enumerate} 

    Note that $F$ is a continuous map. By the invariance of domain theorem, $F|_{V_p}$ is a homeomorphism onto its image.
\end{proof}
\begin{lemma} \label{lem: homeo on annulus}
    Let $h:A_{r,\frac{1}{r}} \to \C$ be a continuous, local homeomorphism at each point of $S^1$ in $A_{r,\frac{1}{r}}$ and assume $h$ is a homeomorphism from $S^1$ onto its image. Then, there exists $r'<1$ such that $h$ is a homeomorphism from $A_{r',1/r'}$ onto its image.
\end{lemma}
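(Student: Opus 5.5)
This is a standard compactness argument: local injectivity near $S^1$ plus global injectivity on $S^1$ gives injectivity on a thin annulus, and invariance of domain upgrades it to a homeomorphism.

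\emph{Step 1 (uniform local injectivity).} For each $p\in S^1$ the hypothesis gives an open disk $V_p$ centred at $p$ on which $h$ is injective. The sets $\frac12 V_p$ (the concentric disks of half the radius) cover the compact set $S^1$, so finitely many of them do. Let $\delta>0$ be a Lebesgue number for this finite cover, and shrink it if necessary so that each $\delta$-ball around a point of $S^1$ lies in a single $V_{p_i}$. Fix $r_0<1$ with $1/r_0-1<\delta/2$. Then any two points $z,w\in A_{r_0,1/r_0}$ with $|z-w|<\delta/2$ lie in a common $V_{p_i}$: take the radial projection $z/|z|$ and note that both $z$ and $w$ are within $\delta$ of it. Hence $h(z)=h(w)$ with $|z-w|<\delta/2$ forces $z=w$.

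\emph{Step 2 (no far-apart collisions near $S^1$).} Suppose no $r'<1$ works. Then for $r_n\to1$ there are $z_n\neq w_n$ in $A_{r_n,1/r_n}$ with $h(z_n)=h(w_n)$. By Step 1, $|z_n-w_n|\ge\delta/2$ for $n$ large. Passing to subsequences, $z_n\to z$ and $w_n\to w$ with $z,w\in S^1$ and $|z-w|\ge\delta/2$, so $z\neq w$. Continuity of $h$ on the open annulus $A_{r,1/r}\supset S^1$ gives $h(z)=h(w)$, contradicting injectivity of $h|_{S^1}$. So some $r'<1$ (and we may take $r'\ge r_0$) makes $h|_{A_{r',1/r'}}$ injective.

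\emph{Step 3 (homeomorphism).} A continuous injective map from an open subset of $\C$ into $\C$ is open by invariance of domain. Hence $h$ maps $A_{r',1/r'}$ homeomorphically onto an open image.

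The only delicate point is Step 1, namely getting one uniform scale $\delta$ at which local injectivity holds in a full neighbourhood of $S^1$, not just near each individual point. The Lebesgue number of the finite subcover, combined with the radial-projection trick, handles this. Step 2 is then the usual sequential compactness argument.
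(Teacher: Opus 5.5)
Your proof is correct and is essentially the paper's sequential-compactness argument. The only difference is the ordering: the paper first uses injectivity of $h|_{S^1}$ to force the two limits to coincide, then applies local injectivity at that single limit point, so your uniform-scale Step 1 is not actually needed, while your Step 3 (invariance of domain) supplies the upgrade from injectivity to homeomorphism that the paper's proof leaves implicit.
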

\begin{proof}
    Suppose not, for all sequence $r<r_n<1$ with $r_n \to 1$, there exists $x_n\neq y_n \in A_{r_n,1/r_n}$ such that $h(x_n)=h(y_n)$. By compactness, we can assume $x_n\to x$ and $y_n \to y$ in $S^1$. By injectivity and continuity of $h$ on $S^1$, $x = y$. However, local homeomorphism at $x$ shows that for large $n$, $x_n = y_n$, a contradiction.
\end{proof}
\begin{proof}[Proof of Theorem \ref{thm: local homeomorphism of AW extension}]
    Let $0<r'<1$ be given in Lemma \ref{lem: homeo on annulus}. By the pasting lemma, $F$ is a continuous map from $\frac{1}{r'}\Delta$ into $\C$. By Lemma \ref{lem: homeo on annulus} and the univalence of $f$ in $\Delta$, $F$ is locally univalent on $\frac{1}{r'}\Delta$. It suffices to show that $F$ is injective. 
    
    Since $F(\Delta)$ is connected and $F(\Delta)\cap \gamma=\emptyset$, it must be in some connected component of $\C-\gamma$. So too is $F(A_{1,\frac{1}{r'}})$. Hence, it suffices to show that they lie in different connected components.

    Suppose that they lie in the same component. By the Jordan-Schoenflies theorem, there exists a homeomorphism $\psi$ from $\R^2$ to itself that sends $\gamma$ to the unit circle, the bounded component to the open unit disk and the unbounded component to the exterior of the closed unit disk. Since the composition of locally injective maps is locally injective, one can see that $\psi\circ F$ is not locally injective on $S^1$ if $\psi(F(\Delta))$ and $\psi(F(\frac{1}{r'}\Delta))$ are in the same connected component. 

    Recall that the Beltrami differential $\mu_F$ satisfies 
    \begin{align*}
        |\mu_F(z)| = \begin{cases}
            0 &, z\in \Delta,\\
            2\|Sf(\frac{1}{\bar{z}})\|_\Delta &, z\in \Delta^*.
        \end{cases}
    \end{align*}
    By Proposition \ref{prop: asymptotically conformal curve}, the extension $F$ is quasiconformal on $\frac{1}{r'} \Delta$.
\end{proof}

\section{The Distances between Epstein Surfaces}\label{sec: dist btw Ep}
In this section, we will show that the distances between Epstein surfaces are arbitrary close to one another when the Jordan curve is asymptotically conformal, which is formally stated in Theorem \ref{thm: Eps converges}.

To do so, we will first show that for a Jordan curve that admits a quasireflection $j$, the function $EN(w)$ with respect to $j$ is related to the distance between certain horoballs as stated formally below in Proposition \ref{prop: dist btw horoball}. 

\begin{proposition} \label{prop: general dist btw horoball}
    Let $f:\Delta\to \Omega$ and $g:\Delta^*\to \Omega^*$ be uniformization maps. For all $p \in \Delta$ and $q\neq g^{-1}(\infty) \in \Delta^*-\{\infty\}$, the two horoballs $H_{\rho_\Omega}(f(p))$ and $H_{\rho_{\Omega^*}}(g(q))$
    based at $f(p)$ and $g(q)$ respectively have disjoint interiors and the shortest distance between the horoballs, denoted by $D(f,g,p,q)$, in $\HH^3$ is equal to 
    \begin{align*}
        D(f,g,p,q) = \log\left|\frac{(f(p)-g(q))^2}{f'(p)g'(q)(1-|p|^2)(|q|^2 - 1)}\right| \geq 0.
    \end{align*}
\end{proposition}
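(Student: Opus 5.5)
The plan is to work in the upper half-space model and use the explicit description of the horospheres from Section \ref{sec: Ep}. The horosphere $H_\rho(w)$ is the Euclidean sphere of radius $1/\sqrt{\rho(w)}$, hence of diameter $d=2/\sqrt{\rho(w)}$, tangent to $\C$ at $w$. For $w=f(p)$ we have $\rho_\Omega(f(p)) = 4/(|f'(p)|^2(1-|p|^2)^2)$, so $d_1 = |f'(p)|(1-|p|^2)$. For $w=g(q)$, where $\rho_{\Omega^*}(g(q)) = 4/(|g'(q)|^2(|q|^2-1)^2)$, we get $d_2 = |g'(q)|(|q|^2-1)$. The hypothesis $q\neq g^{-1}(\infty)$ and $q\ne\infty$ guarantees that $g(q)$ is a finite point and $g'(q)$ makes sense.

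The first step is the general formula for the distance between two disjoint horoballs tangent to $\C$ at finite points $a\neq b$ with Euclidean diameters $d_1,d_2$:
\[
D=\log\frac{|a-b|^2}{d_1d_2}.
\]
To verify this, apply the inversion (an isometry) that sends $b$ to $\infty$, say $z\mapsto 1/(z-b)$ extended to $\HH^3$. It sends the horoball at $b$ of diameter $d_2$ to the region above height $1/d_2$. It sends the horoball at $a$ to a horoball at $1/(a-b)$ whose diameter is $d_1/|a-b|^2$; this follows by tracking the top point, or by the standard scaling of horoball diameters by $|M'(a)|$. The distance is then the vertical log-distance
\[
\log\Bigl(\frac{1/d_2}{d_1/|a-b|^2}\Bigr).
\]
The two horoballs have disjoint interiors exactly when this quantity is $\geq 0$. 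Substituting $a=f(p)$, $b=g(q)$ and the values of $d_1,d_2$ gives the stated expression.

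The main obstacle is proving nonnegativity, i.e.
\[
|f(p)-g(q)|^2\ge |f'(p)|(1-|p|^2)\,|g'(q)|(|q|^2-1).
\]
I would first try Koebe-type bounds. The Koebe $1/4$ theorem applied to $f$ and $g$ gives $d(f(p),\gamma)\ge \tfrac14 d_1$ and similarly for $g$. Since the segment from $f(p)$ to $g(q)$ crosses $\gamma$, this yields $|f(p)-g(q)|\ge \tfrac14(d_1+d_2)$, which is too weak by a constant factor.

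A better route is an area or Grunsky-type inequality. Normalize by Möbius maps: post-compose with a Möbius $M$ sending $g(q)$ to $\infty$, and precompose $g$ with the disk automorphism of $\Delta^*$ taking $\infty$ to $q$. The formula for $D$ is Möbius invariant, since it is a hyperbolic distance and each horoball transforms naturally by Proposition \ref{prop: naturality of Ep}. One can then reduce to $q=\infty$ with $g(z)=cz+\dots$ near $\infty$, where the horoball at $\infty$ has height $|c|$, and to $p=0$. The inequality becomes $|f'(0)|\le |c|$. This is the classical consequence of the area theorem or Lehto's comparison, $\mathrm{cap}$ inner radius $\le$ outer radius: the conformal radius of $\Omega$ at $f(0)$ is at most the capacity $|c|$ of $\overline{\Omega}$. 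Carrying out this normalization carefully, and checking that the transformed quantities match the expression in the statement, is the step I expect to need the most care. Disjointness of the interiors then follows from $D\ge0$.
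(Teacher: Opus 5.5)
Your proposal is correct and follows essentially the paper's route. The paper likewise normalizes to $p=0$, $q=\infty$ using $M(z)=\frac{z-f(p)}{z-g(q)}$ and disk automorphisms, reads off $D=\log\left|\tilde g'(\infty)/\tilde f'(0)\right|$ from the horoball heights, and gets $D\geq 0$ from $|\tilde f'(0)|\leq|\tilde g'(\infty)|$ (Lemma \ref{lem: Grunsky}), the same inequality you invoke via the area theorem. The only difference is that you get the explicit formula directly from the M\"{o}bius-invariant two-horoball distance $\log\frac{|a-b|^2}{d_1d_2}$ with $d=2/\sqrt{\rho}$, whereas the paper recovers it by computing the derivatives of the normalized maps with the chain rule.
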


Before giving the proof of this proposition, we need a special case of the Grunsky's inequality. 
\begin{lemma} \label{lem: Grunsky}(See Theorem 4.1 of \cite{PommerenkeUniFn} or Lemma 4.1 of \cite{BBVW_universal} )
    for $f:\Delta\to \C$ and $g:\Delta^*\to \hat{\C}$ univalent functions with $f(0) = 0$ and $g(\infty) = \infty$, if $f(\Delta) \cap g(\Delta^*) = \emptyset$, then
    \begin{align*}
        \int_\Delta \left| \frac{f'(z)}{f(z)}-\frac{1}{z}\right|^2 dA(z) + \int_{\Delta^*} \left| \frac{g'(z)}{g(z)}-\frac{1}{z}\right|^2 dA(z) \leq 2\pi\log \left| \frac{g'(\infty)}{f'(0)}\right|
    \end{align*}
    with equality if the complement of $f(\Delta)\cup g(\Delta^*)$ has zero Lebesgue measure. 
\end{lemma}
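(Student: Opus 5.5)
The plan is the classical Dirichlet-integral (area theorem) proof. Everything is reduced to boundary integrals over level circles, transported to the $w$-plane, and then compared through the region that separates $f(\Delta)$ from $g(\Delta^*)$.

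For $0<r<1<R$, set $u(z)=\log|f(z)/z|$ on $\Delta$ and $v(z)=\log|g(z)/z|$ on $\Delta^*$.
\begin{itemize}
\item Since $f(0)=0$ with $f'(0)\neq 0$ and $f$ is univalent, $f(z)/z$ is holomorphic and zero-free on $\Delta$. So $u$ is harmonic and $|\nabla u|^2=|f'/f-1/z|^2$.
\item Likewise $g(z)/z$ is holomorphic and nonvanishing near $\infty$ with value $g'(\infty)$ there. Also $g$ has no zeros in $\Delta^*$, since $0=f(0)\notin g(\Delta^*)$. So $v$ is harmonic on $\Delta^*\cup\{\infty\}$ and $|\nabla v|^2=|g'/g-1/z|^2$.
\end{itemize}
Green's identity gives $\int_{\Delta_r}|\nabla u|^2\,dA=\oint_{|z|=r}u\,\partial_n u\,ds$ with $n$ the outward normal, and similarly $\int_{|z|>R}|\nabla v|^2\,dA=-\oint_{|z|=R}v\,\partial_r v\,ds$.

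Next I expand the boundary terms by writing $u=\log|f|-\log r$ on $|z|=r$. I will use three facts:
\begin{itemize}
\item $\oint_{|z|=r}\partial_n\log|f|\,ds=2\pi$, by the argument principle (one simple zero).
\item $\oint_{|z|=r}\partial_n\log r\,ds=2\pi$.
\item The mean value of $\log|f|$ on $|z|=r$ is $\log|f'(0)|+\log r$, by the mean value property for $u$.
\end{itemize}
Together these give
$$\int_{\Delta_r}|\nabla u|^2\,dA=\oint_{|z|=r}\log|f|\,\partial_n\log|f|\,ds-2\pi\log|f'(0)|-2\pi\log r.$$
Because $\log|f|\,\partial_n\log|f|\,ds$ is conformally invariant, the integral equals $\oint_{\gamma_r}\log|w|\,\partial_n\log|w|\,|dw|$ with $\gamma_r=f(\{|z|=r\})$. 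The analogous computation at $|z|=R$ gives $\int_{|z|>R}|\nabla v|^2\,dA=-\oint_{\Gamma_R}\log|w|\,\partial_n\log|w|\,|dw|+2\pi\log|g'(\infty)|+2\pi\log R$, with $\Gamma_R=g(\{|z|=R\})$ and the normal pointing away from $g(\{|z|>R\})$.

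Adding the two identities, the curve terms combine into the flux of $\log|w|\nabla\log|w|$ through the boundary of the ring domain $A_{r,R}'$ between $\gamma_r$ and $\Gamma_R$. This domain contains $\partial f(\Delta)$ and avoids $0$, so $\log|w|$ is harmonic on it. Green's identity applied there turns the curve terms into $-\int_{A_{r,R}'}|\nabla\log|w||^2\,dA=-\int_{A'_{r,R}}|w|^{-2}\,dA\le 0$. This yields
$$\int_{\Delta_r}|\nabla u|^2+\int_{|z|>R}|\nabla v|^2=2\pi\log\left|\frac{g'(\infty)}{f'(0)}\right|+2\pi\log\frac{R}{r}-\int_{A'_{r,R}}\frac{dA}{|w|^2}.$$
Finally I let $r\to1^-$ and $R\to1^+$. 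By monotone convergence the left side tends to the full integrals, and $\log(R/r)\to 0$, which gives the inequality. The regions $A'_{r,R}$ decrease to $\hat{\C}\setminus(f(\Delta)\cup g(\Delta^*))$, and $|w|^{-2}$ is integrable there because that set stays away from $0$ and $\infty$. So when this complement has measure zero the area term tends to $0$ and equality holds.

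The main obstacle is the sign and orientation bookkeeping in combining the two boundary integrals into a single Green's identity on $A'_{r,R}$. I would handle it by checking the whole computation on the model case $f(z)=z$, $g(z)=z$, where both sides vanish. A secondary check is that $A'_{r,R}$ really is a ring domain omitting $0$ and $\infty$; this follows from $f(\overline{\Delta_r})\subset f(\Delta)$ and $g(\{|z|\ge R\})\subset g(\Delta^*)$ being disjoint.
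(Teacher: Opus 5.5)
The paper gives no proof of this lemma; it quotes it from \cite{PommerenkeUniFn} and \cite{BBVW_universal}. Your Green's-formula (area principle) argument is a correct, self-contained proof. The steps hold up:
\begin{itemize}
\item the harmonicity of $u$ and $v$, including $v$ at $\infty$, where $\nabla v=O(|z|^{-2})$ justifies the exterior Green identity;
\item the argument-principle and mean-value evaluations;
\item the conformal transport of the flux $\log|f|\,\partial_n\log|f|\,ds$;
\item the ring-domain structure of $A'_{r,R}$;
\item the monotone and dominated convergence in the limit.
\end{itemize}
Your final identity
\[
\int_{\Delta_r}|\nabla u|^2\,dA+\int_{|z|>R}|\nabla v|^2\,dA=2\pi\log\left|\frac{g'(\infty)}{f'(0)}\right|+2\pi\log\frac{R}{r}-\int_{A'_{r,R}}\frac{dA}{|w|^2}
\]
is right.

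There is one slip in the orientation bookkeeping you flagged. At $|z|=R$ the vector $\partial_r$ points into $\{|z|>R\}$. Under $g$ it becomes the normal to $\Gamma_R$ pointing \emph{into} $g(\{|z|>R\})$, which is outward from $A'_{r,R}$, not away from $g(\{|z|>R\})$. With the orientation as you wrote it, the $\Gamma_R$ term in your intermediate formula has the wrong sign. Your own test case $g(z)=z$ exposes this: it gives $4\pi\log R$ instead of $0$. Fix the orientation and put $U=\log|w|$. Take $n$ as the outward normal of $f(\Delta_r)$, which points into $A'_{r,R}$, and $\nu$ as the normal on $\Gamma_R$ pointing out of $A'_{r,R}$. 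The curve terms are then $\oint_{\gamma_r}U\,\partial_nU\,|dw|-\oint_{\Gamma_R}U\,\partial_\nu U\,|dw|$. Green's identity on $A'_{r,R}$ shows this is exactly $-\int_{A'_{r,R}}|\nabla U|^2\,dA$, so your displayed identity stands.

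Your identity also proves more than the lemma states. Passing to the limit gives the exact defect $\int_E|w|^{-2}\,dA$, where $E=\hat{\C}\setminus(f(\Delta)\cup g(\Delta^*))$. Since $E$ avoids $0$ and $\infty$, equality holds if and only if $E$ has measure zero.
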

\begin{proof}[Proof of Proposition \ref{prop: general dist btw horoball}]
    We first normalize $p$ and $q$ to $0$ and $\infty$ and keeping track of required the M\"{o}bius transformations. Since $\PSL(2,\C) = \Isom^+(\HH^3)$ and $\PSU(1,1) = \Isom^+(\Delta)=\Isom^+(\Delta^*)$, $D(f,g,p,q) = D(M\circ f \circ m_p,M\circ g \circ m_q, 0,\infty) = \log\left|\frac{(M\circ g \circ m_q)'(\infty)}{(M\circ f \circ m_p)'(0)}\right|$ where
    \begin{align*}
        M(z) = \frac{z-f(p)}{z-g(q)}, 
        m_p(z) = \frac{z+p}{1+\bar{p}z}, 
        m_q(z) = \frac{z + \frac{1}{\bar{q}}}{1+\frac{1}{q}z}.
    \end{align*} 
    By Lemma \ref{lem: Grunsky}, $\log\left|\frac{(M\circ g \circ m_q)'(\infty)}{(M\circ f \circ m_p)'(0)}\right|\geq 0$. 

    The derivative of $M, m_q$ and $m_p$ are given by 
    \begin{align*}
        M'(z) = \frac{f(p)-g(q)}{(z-g(q))^2}, 
        m_p'(z) = \frac{1-p\bar{p}}{(1+\bar{p}z)^2}, 
        m_q'(z) = \frac{1-\frac{1}{q\bar{q}}}{(1+\frac{1}{q}z)^2}.
    \end{align*}    
    Therefore,
    \begin{align*}
        \lim_{z\to \infty}M'(g(m_q(z)))g'(m_q(z))m_q'(z) &= 
        g'(q)(f(p)-g(q))\left(1-\frac{1}{q\bar{q}}\right) \left(\lim_{z\to\infty}\frac{(1+\frac{z}{q})^{-1}}{g(m_q(z))-g(q)}\right)^2 \\
        &= g'(q)(f(p)-g(q))\left(1-\frac{1}{q\bar{q}}\right) \left(\lim_{z\to\infty}\frac{-(1+\frac{z}{q})^{-2}\frac{1}{q}}{g'(m_q(z))\frac{1-\frac{1}{q\bar{q}}}{(1+\frac{z}{q})^2}}\right)^2 \\
        &= g'(q)(f(p)-g(q))\left(1-\frac{1}{q\bar{q}}\right) \left(\frac{-\frac{1}{q}}{g'(q)(1-\frac{1}{q\bar{q}})}\right)^2 \\
        &= \frac{(f(p)-g(q))\frac{1}{q^2}}{g'(q)(1-\frac{1}{q\bar{q}})} \\
        &= \frac{(f(p)-g(q))}{g'(q)(|q|^2-1)\frac{q}{\bar{q}}}.
    \end{align*}
    Likewise,
    \begin{align*}
        \lim_{z\to 0} M'(f(m_p(z)))f'(m_p(z))m_p'(z) 
        &= \frac{f'(p)(1-|p|^2)}{f(p)-g(q)}.
    \end{align*}
    Thus, 
    $$D(f,g,p,q) = \log\left|\frac{(f(p)-g(q))^2}{f'(p)g'(q)(1-|p|^2)(|q|^2 - 1)}\right| \geq 0.$$
\end{proof}

If the Jordan curve $\gamma$ is asymptotically conformal, we can consider local quasireflection $j(w) = F\left(\frac{1}{\overline{F^{-1}(w)}}\right)$ coming from the local quasiconformal extension $F$ via osculating M\"{o}bius transformations. A direct computation give the following proposition. 
\begin{proposition} \label{prop: dist btw horoball}
    Following the notation in Proposition \ref{prop: general dist btw horoball}, if $\gamma$ is asymptotically conformal,
    \[D(f,g,z,g^{-1}(F(1/\bar{z}))) = \log( \Psi(f(z)))\]
    for all $z\in \Delta\cap A_{r,1/r}$, where $\Psi(w) = \left(\frac{EN(w)}{16}\right)^{1/2}$.
\end{proposition}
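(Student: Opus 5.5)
The plan is to set $w=f(z)$ and $q=g^{-1}(F(1/\bar z))$, so that $g(q)=F(1/\bar z)=j(w)$, and to check that the quantity inside the logarithm in Proposition \ref{prop: general dist btw horoball} equals $(EN(w)/16)^{1/2}$. First I verify that $F(1/\bar z)=F(1/\overline{F^{-1}(w)})=j(w)$; this holds because $F^{-1}(w)=z$ for $z\in\Delta\cap A_{r,1/r}$. Next I check that $q\neq g^{-1}(\infty)$ and $q\neq\infty$. Both follow because $F(1/\bar z)$ lies in the bounded set $F(A_{1,1/r})\subset\Omega^*\cap\C$. Here I use the proof of Theorem \ref{thm: local homeomorphism of AW extension}, which shows that $F(A_{1,1/r'})$ lies in the complementary component of $\gamma$ from $F(\Delta)$; shrinking $r$ if needed, this gives $F(A_{1,1/r})\subset\Omega^*$. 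So Proposition \ref{prop: general dist btw horoball} applies, and
\begin{align*}
D(f,g,z,q)=\log\left|\frac{(w-j(w))^2}{f'(z)(1-|z|^2)\,g'(q)(|q|^2-1)}\right|.
\end{align*}

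Now I express the two denominator factors through hyperbolic densities. Since $f:\Delta\to\Omega$ is a biholomorphism, $\rho_\Omega(f(z))=\frac{4}{|f'(z)|^2(1-|z|^2)^2}$, which gives $|f'(z)|(1-|z|^2)=2\rho_\Omega(w)^{-1/2}$. In the same way, $g:\Delta^*\to\Omega^*$ is an isometry for the hyperbolic metric $\rho_{\Delta^*}(q)=\frac{4}{(|q|^2-1)^2}|dq|^2$, so $|g'(q)|(|q|^2-1)=2\rho_{\Omega^*}(j(w))^{-1/2}$. This is the normalization already used in the proof of Lemma \ref{lem: asy ppty for g}. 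Substituting,
\begin{align*}
D=\log\frac{|w-j(w)|^2\rho_\Omega(w)^{1/2}\rho_{\Omega^*}(j(w))^{1/2}}{4}=\log\left(\frac{EN(w)}{16}\right)^{1/2}=\log\Psi(w),
\end{align*}
by the definition of $EN$.

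The only step that needs care is the bookkeeping in the first paragraph: confirming $j(w)\in\Omega^*$, that it is finite, and that its preimage under $g$ is a legitimate $q$. The density identities are routine. I also note that the sign claim $D\geq0$ is inherited from Proposition \ref{prop: general dist btw horoball}, so it gives $EN(w)\ge16$ as a byproduct.
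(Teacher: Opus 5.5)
Your proof is correct and is the same direct computation the paper intends: you substitute $g(q)=j(w)$ into the formula of Proposition~\ref{prop: general dist btw horoball} and use $|f'(z)|(1-|z|^2)=2\rho_\Omega(w)^{-1/2}$ and $|g'(q)|(|q|^2-1)=2\rho_{\Omega^*}(j(w))^{-1/2}$. One small slip: finiteness of $j(w)$ gives $q\neq g^{-1}(\infty)$, but it gives $q\neq\infty$ only if $g(\infty)=\infty$, which the paper does not assume. This is harmless, because $D$ depends only on the two horoballs and not on the choice of $g$, so you may normalize $g(\infty)=\infty$ (or shrink $r$ so that $g(\infty)\notin F(A_{1,1/r})$).
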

As a consequence of Proposition \ref{prop: dist btw horoball} and Theorem \ref{thm: EN converges}, it implies that for an asymptotically conformal curve, the distance between the horoballs based at $w$ and $j(w)$ converges to $0$ with respect to the hyperbolic metric. To conclude the proof of the Theorem \ref{thm: Eps converges}, we need the following lemma.
\begin{lemma} \label{lem: Eps and EN estimate}
    Let $T(w) = \inf \{ t\in \R| \pi(\g_t(\widetilde{\Ep_{\Omega}}(w)))\in \Ep_{\Omega^*}(\Omega^*) \}$. For each domain bounded by an asymptotically conformal curve, there exists an open set $U \subseteq \Omega$ such that $\gamma \subset \partial U$ and $T(w)\leq \log\Psi(w)$ for $w \in U$.
\end{lemma}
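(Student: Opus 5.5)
We plan to reduce the statement to an intermediate value argument along one geodesic segment. Fix $w=f(z)$ with $z\in\Delta\cap A_{r,1/r}$ close to $S^1$. By Lemma \ref{lem: naturality of osc}, the geodesic carrying $\widetilde{\Ep}_\Omega(w)$ has forward endpoint $\g_\infty=w$ and backward endpoint $\g_{-\infty}=F(1/\bar z)=j(w)$. This geodesic is therefore orthogonal both to the horosphere $H_{\rho_\Omega}(w)$, which contains $\Ep_\Omega(w)$, and to the horosphere $H_{\rho_{\Omega^*}}(j(w))$. Its segment between the two horoballs realizes their distance. By Propositions \ref{prop: general dist btw horoball} and \ref{prop: dist btw horoball} this distance is $D=\log\Psi(w)\ge 0$. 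So flowing $\widetilde{\Ep}_\Omega(w)$ towards $j(w)$ for time $\log\Psi(w)$ lands at a point $x_w\in\partial H_{\rho_{\Omega^*}}(j(w))$, and $\Ep_\Omega(w)$ and $x_w$ are the two ends of a segment of length $\log\Psi(w)$. (With the sign convention implicit in the definition of $T$, this is the flow parameter to be bounded.) It therefore suffices to show that this segment meets $\Ep_{\Omega^*}(\Omega^*)$.

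To find the intersection we will build ``normal flow coordinates'' for $\Omega^*$ near $\gamma$. Consider
\[
\Phi(\zeta,t)=\pi\bigl(\g_t(\widetilde{\Ep}_{\Omega^*}(g(\zeta)))\bigr),\qquad \zeta\in A_{1,1/r'},\ t\in\R .
\]
Repeating Lemmas \ref{lem: bound on length and curv} and \ref{lem: decreasing Eps} and Corollary \ref{cor: flow at infinity are inj} for $g$, which is legitimate by Lemma \ref{lem: asy ppty for g}, shows that the level surfaces $\Phi(\cdot,t)$ are embedded with principal curvatures of absolute value $<1$. It also shows that distinct flow lines have distinct endpoints at both ends. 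The endpoints are $g(\zeta)\in\Omega^*$ and $G(1/\bar\zeta)\in\Omega$, the latter by Lemma \ref{lem: naturality of osc} applied to $g$. Hence, after shrinking the annulus, $\Phi$ is injective, and by invariance of domain it is a homeomorphism onto an open region $R\subset\HH^3$. The region $R$ accumulates exactly on a neighbourhood of $\gamma$ in $\hat\C$. The function $t\circ\Phi^{-1}$ is continuous on $R$, and its zero set is $\Ep_{\Omega^*}(\Omega^*)\cap R$.

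It remains to find the sign of this coordinate at the two ends of the segment. At $x_w$ we argue as follows. Since the principal curvatures of the level surface $t=0$ have absolute value $<1$, that surface touches each horosphere $H_{\rho_{\Omega^*}}(\cdot)$ from outside, at least locally. Hence points of the closed horoball $H_{\rho_{\Omega^*}}(j(w))$ near its tangency point satisfy $t\ge0$. More robustly, the horosphere $H_{e^{2s}\rho_{\Omega^*}}(j(w))$ passes through $\Phi(\zeta_w,s)$ with $\zeta_w=g^{-1}(j(w))$, and we will compare it with the level set $t=s$. At $\Ep_\Omega(w)$ we want $t\le 0$. For this we use that the whole flow line through $\Ep_\Omega(w)$ runs from $j(w)\in\Omega^*$, where $t\to+\infty$, to $w\in\Omega$, where $t\to-\infty$. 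We also use Remark \ref{rmk: bounded by horoballs}, applied to the $\Omega$-Epstein surface, to keep that line and the surface $\Ep_\Omega(\cdot)$ on the $\Omega$-side of the surface $\Ep_{\Omega^*}$ near $\gamma$. The intermediate value theorem along the segment then produces a zero of the $t$-coordinate, i.e. a point of $\Ep_{\Omega^*}(\Omega^*)$, at flow time at most $\log\Psi(w)$. We take $U=F(A_{r'',1})\cap\Omega$ for $r''$ close enough to $1$ that the whole segment lies in $R$. This uses Theorem \ref{thm: EN converges}: since $\Psi(w)\to1$, the segment is short and both its ends tend to $\gamma$.

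The main obstacle is the global separation claim: that $\Ep_\Omega(w)$ and the horoball at $j(w)$ lie on opposite sides of $\Ep_{\Omega^*}(\Omega^*)$ inside $R$, and not merely locally. We will attack it first via the monotonicity argument of Lemma \ref{lem: decreasing Eps}, normalizing by a M\"obius map so that $\widetilde{\Ep}_{\Omega^*}(j(w))$ is vertical. We then show that, within the relevant region, the Epstein surface of $\Omega^*$ stays below the horizontal horosphere through that point, while both the horoball at $j(w)=\infty$ and the segment's endpoint lie above it.
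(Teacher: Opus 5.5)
Your reduction is sound and matches the paper's setup. The flow line of $\widetilde{\Ep}_\Omega(w)$ is the geodesic $L_f(w)$ from $w$ to $j(w)$. It is orthogonal to both horospheres, and the segment between them has length $\log\Psi(w)$.

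\textbf{The gap: injectivity of $\Phi$.} You infer ``distinct flow lines have distinct endpoints at both ends, hence $\Phi$ is injective,'' but this does not follow. Two geodesics of $\HH^3$ with four distinct endpoints intersect whenever the endpoints are concyclic and the two pairs separate each other. Embeddedness of each level surface also does not exclude $\Phi(\zeta_1,t_1)=\Phi(\zeta_2,t_2)$ with $t_1\neq t_2$. The bound $|k_\pm|<1$ only excludes focal points, so it makes $\Phi$ a local diffeomorphism and nothing more. The claim that the normal geodesics of $\Ep_{\Omega^*}$ foliate a region near $\gamma$ is a local form of Epstein's foliation theorem. Neither the paper nor your sketch proves it, and without it $t\circ\Phi^{-1}$ is not defined, so the intermediate value step has nothing to act on.

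\textbf{The sign claims are also unproved.} At $x_w$ your tangency argument is aimed at the wrong point. $H_{\rho_{\Omega^*}}(j(w))$ touches $\Ep_{\Omega^*}(\Omega^*)$ at $\Ep_{\Omega^*}(j(w))$. That point lies on the geodesic from $j(w)$ to $j^*(j(w))$, not on $L_f(w)$, since in general $j^*\neq j$. At $\Ep_\Omega(w)$, Remark~\ref{rmk: bounded by horoballs} compares a single Epstein surface with a plane and says nothing about $\Ep_\Omega$ versus $\Ep_{\Omega^*}$. As you note yourself, the separation is left open.

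\textbf{How the paper avoids all this.} The coordinates $\Phi$ are unnecessary, and your last paragraph already contains the step that does the work.
\begin{itemize}
\item Normalize $w=0$, $j(w)=\infty$ and $H_{\rho_{\Omega^*}}(j(w))=\{\xi=1\}$. Then $L_f(w)$ is the vertical axis and $\Ep_\Omega(w)$ sits at height $c=\Psi(w)^{-1}$.
\item Take any point $\Ep_{\Omega^*}(q)$ of $L_f(w)\cap\Ep_{\Omega^*}(\Omega^*)$. For $w$ near $\alpha$, Lemma~\ref{lem: converging of p and q} puts $p=j(w)$ and $q$ in the small set $U_\alpha$.
\item The path $\tau$ of Lemma~\ref{lem: bound on length and curv} then joins $\Ep_{\Omega^*}(p)$, where the tangent plane is horizontal, to $\Ep_{\Omega^*}(q)$.
\item Lemma~\ref{lem: decreasing Eps} gives height $e^{h(l)}\le 1$, so the flow time to $\Ep_{\Omega^*}(q)$ is $\log(e^{h(l)}/c)\le\log\Psi(w)$.
\end{itemize}
The complementary bound $e^{h(l)}\ge c$ is your sign at $\Ep_\Omega(w)$, and it is what makes this flow time nonnegative. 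It follows at once from Proposition~\ref{prop: general dist btw horoball}: the open horoball at $w$ misses every horosphere $H_{\rho_{\Omega^*}}(q)$, and $\Ep_{\Omega^*}(q)$ lies on that horosphere.

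\textbf{Existence of the intersection.} The paper takes for granted that $L_f(w)\cap\Ep_{\Omega^*}(\Omega^*)\neq\emptyset$. To supply this, use a winding number instead of $t\circ\Phi^{-1}$. Glue the continuous extension of $\Ep_{\Omega^*}$ to $\overline{\Omega^*}$ (the identity on $\gamma$) to the identity on $\overline{\Omega}$, giving a map $S^2\to\overline{\HH^3}$. Its winding number is $\pm1$ at points of $\HH^3$ near $w$ and $0$ near $j(w)$. Hence $L_f(w)$ must cross $\Ep_{\Omega^*}(\Omega^*)$.
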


\begin{proof}[Proof of Theorem \ref{thm: Eps converges}]
    By Lemma \ref{lem: Grunsky}, $T(w) \geq 0$. Therefore, by Lemma \ref{lem: Eps and EN estimate}, 
    \begin{align*}
        0 \leq d(\Ep_{\Omega}(w),\Ep_{\Omega^*}(\Omega^*)) \leq T(w) \leq \log\Psi(w)
    \end{align*}
    for $w\in U$. The conclusion follows from Theorem \ref{thm: EN converges}.
\end{proof}
%\marginpar{\tiny need to shrink the domain}

For all $w \in \Omega$, let $L_f(w)$ be the set of points on the hyperbolic geodesic in $\HH^3$ that joins $w$ to $j(w)$. Let $p = j(w)$ and $q\in \Ep_{\Omega^*}^{-1}(L_f(w)\cap \Ep_{\Omega^*}(\Omega^*))\subseteq \Omega^*$. Let $\overline{\HH^3} = \HH^3\cup S^2$. 

\begin{lemma} \label{lem: converging of p and q}
    Suppose $w \to \alpha \in \partial \Omega$. Then, the corresponding $p$ and $q$ converge to $\alpha \in \partial(\overline{\HH^3})$ with respect to the Euclidean topology.
\end{lemma}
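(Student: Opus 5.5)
The plan is to treat $p$ and $q$ separately. For $p$ we use the continuity of the local quasireflection $j$. For $q$ we first show that the Epstein point $\Ep_{\Omega^*}(q)$ converges to $\alpha$ in $\overline{\HH^3}$. We then pull this back to $q$ itself, using the fact that the Epstein-Poincar\'{e} surface of $\Omega^*$ accumulates on $\partial\HH^3$ only along $\gamma$, and does so in a controlled way.

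\emph{Step 1: $p\to\alpha$.} By Theorem \ref{thm: local homeomorphism of AW extension}, $j(w)=F\left(\frac{1}{\overline{F^{-1}(w)}}\right)$ is continuous on $F(A_{r,1/r})$ and fixes $\gamma$ pointwise. Hence $p=j(w)\to j(\alpha)=\alpha$. Quantitatively, Lemma \ref{lem: diff and dist com for F} together with Lemma \ref{lem: comparison for F} gives
\begin{align*}
|w-j(w)|\le (2+o(1))\,d(w,\gamma)\to 0 .
\end{align*}

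\emph{Step 2: $\Ep_{\Omega^*}(q)\to\alpha$ in $\overline{\HH^3}$.} In the upper half-space model, $L_f(w)$ is a Euclidean semicircle orthogonal to $\C$ with endpoints $w$ and $j(w)$. Its Euclidean diameter is $|w-j(w)|$, which tends to $0$ by Step 1. Every point of $L_f(w)$, in particular $x=\Ep_{\Omega^*}(q)$, therefore lies within Euclidean distance $|w-j(w)|$ of $w$, and has height at most $|w-j(w)|/2$. Since $w\to\alpha$, it follows that $x\to\alpha$ in the Euclidean topology.

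\emph{Step 3: from $\Ep_{\Omega^*}(q)$ to $q$.} This is the step I expect to be the main obstacle, because $\Omega^*$ is unbounded and $q$ might a priori escape to $\infty$ or to a far part of $\gamma$. I would argue by subsequences. Take $w_n\to\alpha$ with corresponding $q_n$, and pass to a subsequence with $q_n\to\beta\in\hat{\C}$.

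First suppose $\beta\in\Omega^*$, including possibly $\beta=\infty$. Since $\Ep_{\Omega^*}$ is continuous on $\Omega^*$ with values in $\HH^3$, we would get $\Ep_{\Omega^*}(q_n)\to\Ep_{\Omega^*}(\beta)\in\HH^3$. This contradicts Step 2. At $\infty$, continuity follows from Proposition \ref{prop: naturality of Ep} after a M\"{o}bius change of coordinates.

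Hence $\beta\in\gamma$. Now $\Ep_{\Omega^*}(q_n)$ lies on the horosphere $H_{\rho_{\Omega^*}}(q_n)$, which is a Euclidean sphere tangent to $\C$ at $q_n$ of radius $\rho_{\Omega^*}(q_n)^{-1/2}$. Near the finite point $\beta$ we have the Koebe-type bound $\rho_{\Omega^*}(q)\ge \frac{1}{4}\,d(q,\gamma)^{-2}$. One can also read this off from Lemmas \ref{lem: metric and diff com for G} and \ref{lem: diff and dist com for G}, which give $\rho_{\Omega^*}(q)\,d(q,\gamma)^2\to1$. Either way the radius is $O(d(q_n,\gamma))\to0$, so
\begin{align*}
|\Ep_{\Omega^*}(q_n)-q_n|\le 2\rho_{\Omega^*}(q_n)^{-1/2}\to 0 .
\end{align*}
Consequently $\Ep_{\Omega^*}(q_n)\to\beta$. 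Comparing with Step 2 forces $\beta=\alpha$.

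Since every subsequential limit of $q_n$ equals $\alpha$ and $\hat{\C}$ is compact, we conclude that $q\to\alpha$, which together with Step 1 completes the plan.
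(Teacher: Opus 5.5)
Your proof is correct and follows the paper's argument. You get $p\to\alpha$ from the continuity of $j$, and $q\to\alpha$ from subsequential limits $\beta$: $\beta\in\Omega^*$ is ruled out because the Epstein points lie on $L_f(w)$, whose height tends to $0$, and $\beta\in\gamma$ is forced to equal $\alpha$ by comparing the limit of $\Ep_{\Omega^*}(q_n)$ with that of $L_f(w_n)$.

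The only difference is in the boundary behavior of $\Ep_{\Omega^*}$. The paper cites Corollary 3.13 of \cite{BBVW_universal} for its continuous extension by the identity on $\gamma$; you derive the needed fact directly from the horosphere radius $\rho_{\Omega^*}(q)^{-1/2}=O(d(q,\gamma))$. For that radius bound, the paper's estimate $\rho_{\Omega^*}(q)\,d(q,\gamma)^2\to 1$ is the cleaner justification. Koebe's $\frac14$-theorem does not apply verbatim to the meromorphic uniformizer of $\Omega^*$, although any positive constant would suffice for your argument.
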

\begin{proof}
    Since $j$ is a continuous reflection that fixes $\gamma$ pointwise, $p=j(w) \to j(\alpha) = \alpha \in \partial \Omega^*$ as $w \to \alpha \in \partial \Omega$.

    For all sequences $w_n$ that converge to $\alpha \in \partial \Omega$, let $a_k$ be any subsequence of $q_n$. Since $\gamma \cup \Omega^*$ is compact with respect to the spherical topology of $\hat{\C}$, there exists a subsequence of $a_k$ that converges to $\beta \in \gamma \cup \Omega^*$. We can assume $a_k$ converges to $\beta \in \gamma \cup \Omega^*$. Note that each $a_k \in \Omega^*$. 

    Suppose for a contradiction that $\beta\neq \alpha$. If $\beta \in \Omega^*$, then $\{a_k\}\cup \{\beta\}$ is a compact set in $\Omega^*$. Let $\xi:\overline{\HH^3} \to \R_{\geq 0}$ be defined as $(x,y,h) \mapsto h$. Therefore, $\min(\xi(\{\Ep_{\Omega^*}(a_k)\}\cup \{\Ep_{\Omega^*}(\beta)\})) > 0$.
    
    However, since $\Ep_{\Omega^*}(a_k) \in L_f(w_k)$ and $L_f(w_k)$ is the hyperbolic geodesic joining $w_k$ and $j(w_k)$ in which both points converge to the same point $\alpha$, the height of $L_f(w_k)$ converges to $0$. By the continuity of $\Ep_{\Omega^*}$ and its extension to the identity on $\gamma$ (See Corollary 3.13 of \cite{BBVW_universal}), $\xi(\Ep_{\Omega^*}(\beta)) = 0$, a contradiction.
    
    Therefore, $\beta \in \gamma$. By a similar argument as before, $\beta = \alpha$. Since any subsequence of $q_n$ has a further subsequence that converges to $\alpha$, $q_n \to \alpha \in \partial \overline{\HH^3}$.
\end{proof}
Let $w_n \to \alpha \in \partial \Omega$ and let $p_n$ and $q_n$ be the corresponding points in $\Omega^*$ defined above. 

Following the idea of Lemma \ref{lem: bound on length and curv}, choose a similar open set $U_\alpha\subseteq \overline{\Omega^*}$ containing $\alpha$. By Lemma \ref{lem: converging of p and q}, there exists $N>0$ such that for all $n\geq N$, $p_n,q_n\in U_\alpha$. For each $n\geq N$, by post-composing with a M\"{o}bius transformation, we can assume that $w_n = 0$, $p_n = j(w_n) =\infty$ and the horoball $H_{\rho_{\Omega^*}}(p_n)$ is the affine plane $\{\xi = 1\}$. Let $\tau_n:(-\epsilon,l_n+\epsilon) \to \Ep_{\Omega^*}(\Omega^*)$ be a smooth path obtained from Lemma \ref{lem: bound on length and curv} such that $\tau_n(0) = \Ep_{\Omega^*}(p_n)$ and $\tau_n(l) = \Ep_{\Omega^*}(q_n)$. 
\begin{lemma} \label{lem: below Eps}
    For all $0<s<l$, $\tau_n(s)$ lies below $H_{p_n}$. 
\end{lemma}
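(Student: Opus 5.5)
The plan is to reduce the claim to Lemma \ref{lem: decreasing Eps}, applied to the Epstein--Poincar\'{e} surface of $\Omega^*$ instead of $\Omega$. In the normalization fixed above, $p_n=\infty$ and the horoball $H_{\rho_{\Omega^*}}(p_n)=H_{p_n}$ is the region $\{\xi\ge 1\}$, with boundary horosphere $\{\xi=1\}$.

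First, I locate $\tau_n(0)=\Ep_{\Omega^*}(p_n)$ and its unit normal. By Proposition \ref{prop: properties of general Ep}, $\widetilde{\Ep}_{\Omega^*}(p_n)$ is the inward-pointing normal to the horosphere $H_{\rho_{\Omega^*}}(p_n)$, and $\g_\infty(\widetilde{\Ep}_{\Omega^*}(p_n))=p_n=\infty$. So the point $\Ep_{\Omega^*}(p_n)$ lies on the plane $\{\xi=1\}$. Its normal vector is vertical, since the geodesic it generates is the vertical line ending at $\infty$.

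Second, I check the hypothesis of Lemma \ref{lem: decreasing Eps}. The path $\tau_n$ lies on the Epstein surface, so $\tau_n'(0)$ is tangent to $\Ep_{\Omega^*}(\Omega^*)$ at $\tau_n(0)$. Hence $\tau_n'(0)$ is orthogonal to the vertical normal, and writing $\tau_n=(x,y,\xi)$ we get $\xi'(0)=0$.

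Third, I rerun Lemma \ref{lem: bound on length and curv} and Lemma \ref{lem: decreasing Eps} for $\Omega^*$. This uses Lemma \ref{lem: asy ppty for g}: it gives $\|Sg(z)\|_{\Delta^*}\to 0$, so near $\alpha$ the Schwarzian norm is below any chosen small $K$. Consequently the principal curvatures of the surface satisfy $|k|\le K/(1-K)$, and the path built from a hyperbolic geodesic in $\Delta^*$ obeys the bounds $\|\bar\nabla_{\tau'}\tau'\|\le 5K/2$ and $\|\tau'\|\ge 1-K$. The argument of Lemma \ref{lem: decreasing Eps} then applies verbatim: with $h=\log\xi$ we have $h''-h'^2<0$, which gives $\xi'(s)<0$ on $(0,l)$. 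Therefore $\xi(s)<\xi(0)=1$ for $0<s<l$, so $\tau_n(s)$ lies strictly below $\{\xi=1\}$, that is, below $H_{p_n}$.

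The main obstacle is making sure the normalizing M\"obius map keeps the estimates intact, and that the whole path $\tau_n$ stays in the region where $\|Sg\|$ is small. For the first point, the estimates in Lemma \ref{lem: bound on length and curv} are isometry-invariant (curvature bounds, speed, principal curvatures), so post-composition is harmless by Proposition \ref{prop: naturality of Ep}. For the second, I will use that $U_\alpha$ was chosen as in Lemma \ref{lem: bound on length and curv}. Then $g^{-1}(p_n)$ and $g^{-1}(q_n)$ lie in a small neighborhood, and the hyperbolic geodesic between them stays inside the larger neighborhood where $\|Sg\|<K$. This works because hyperbolic geodesics in $\Delta^*$ joining points near $S^1$ stay near the same boundary arc, shrinking $U_\alpha$ if needed.

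I also need the orientation to match: the normal used in Lemma \ref{lem: decreasing Eps} points toward the horoball, which is $+\partial_\xi$ here. I would check that the sign in the splitting $\langle\nabla_{\lambda'}\lambda',\partial h\rangle\le\|\bar\nabla_{\lambda'}\lambda'\|+k_{\max}\|\lambda'\|^2$ only uses $|k|$, so the orientation is irrelevant.
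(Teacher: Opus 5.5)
Your proposal is correct and is the same argument as the paper's. The paper's proof simply cites Lemma~\ref{lem: decreasing Eps}, rerun for $\Omega^*$, together with Lemma~\ref{lem: converging of p and q}, which puts $p_n,q_n$ in $U_\alpha$ so that the path $\tau_n$ of Lemma~\ref{lem: bound on length and curv} exists. You have spelled out what the paper leaves implicit: the horizontal tangent $\xi'(0)=0$ at $\tau_n(0)$, the use of Lemma~\ref{lem: asy ppty for g}, isometry invariance, and that the orientation of the normal does not matter.
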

\begin{proof}
    This follows from Lemma \ref{lem: decreasing Eps} and Lemma \ref{lem: converging of p and q}.
\end{proof}
\begin{proof}[Proof of Lemma \ref{lem: Eps and EN estimate}]
    By Lemma \ref{lem: below Eps}, $c := \xi(\Ep_{\Omega}(w)) \leq e^{h(l)} \leq 1$. Hence, $$T(w) \leq d(\Ep_{\Omega}(w),\Ep_{\Omega^*}(q)) = d(\Ep_{\Omega}(w),\tau(l)) = \log\left(\frac{e^{h(l)}}{c}\right) \leq \log(c^{-1}) = \log \Psi(w).$$   
\end{proof}

\section{Global Bounds on the Function EN(w)} \label{sec: global bound}
In this section, we assume that the uniformization map $f:\Delta\to \Omega$ satisfies $\|Sf\|_\Delta<1/2$; we do not assume that $\gamma$ is asymptotically conformal. Note that under this condition, the quasireflection $j$ defined in the previous section is defined on the whole Riemann sphere. In this section, we will prove Theorem \ref{thm: global bound on EN} and establish a bound on the local section for the Bers embedding for $\Omega$.

\subsection{A Bound on the function EN(w)}
The following lemma establishes the first equality of Theorem \ref{thm: global bound on EN}.
\begin{lemma} \label{lem: partial bar z of j}
    The $\partial_{\bar{w}}$ of the quasireflection $j$ is given by
    \begin{align*}
        \partial_{\bar{w}}j(w) = j_{\bar{w}}(w) = 
        \begin{cases}
            -\frac{\rho_\Omega(w)(w-j(w))^2}{4} &,w\in \Omega, \\
            -\frac{4}{(\overline{w-j(w)})^2\rho_\Omega(j(w)) (1-4\|Sf(\zeta)\|^2_\Delta)} &, w\in \Omega^*
        \end{cases}
    \end{align*}
    where $\zeta = f^{-1}(j(w))$.
\end{lemma}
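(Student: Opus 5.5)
We take $j=F\circ c\circ F^{-1}$ with $c(z)=1/\bar z$, as in the earlier sections; the formula $F^{-1}(1/\overline{F(w)})$ in this section reads as a typo for this. The plan is to compute $j_{\bar w}$ on $\Omega$ directly from the explicit osculating M\"obius formula. The case $w\in\Omega^*$ will then follow from the involution identity $j\circ j=\Id$, with no further computation of $F$ on $\Delta^*$ beyond what Section~\ref{sec: Hyp and QuasiHyp} already gives.

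\emph{Step 1 ($w\in\Omega$).} Write $w=f(z)$ with $z\in\Delta$. Then $j(w)=F(1/\bar z)=Mf(z,1/\bar z)$, and $F^{-1}=f^{-1}$ is holomorphic near $w$.
\begin{itemize}
\item Since $z\mapsto 1/\bar z$ is antiholomorphic, only the $F_\zeta$ term survives in the chain rule:
\begin{align*}
j_{\bar w}(w)=F_\zeta(1/\bar z)\Bigl(-\frac{1}{\bar z^{2}}\Bigr)\overline{(f^{-1})'(w)}.
\end{align*}
\item The formula for $\partial_z F$ on $A_{1,1/r}$ from Section~\ref{sec: Hyp and QuasiHyp}, evaluated at $\zeta=1/\bar z$, gives
\begin{align*}
F_\zeta(1/\bar z)=\frac{f'(z)}{Q^2}, \qquad Q=1-Nf(z)\frac{1/\bar z-z}{2}.
\end{align*}
\item The osculating formula gives $w-j(w)=-f'(z)(1/\bar z-z)/Q$. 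Hence $(w-j(w))^2=f'(z)^2(1-|z|^2)^2/(\bar z^2Q^2)$.
\item Multiplying by $\rho_\Omega(w)/4=1/(|f'(z)|^2(1-|z|^2)^2)$ gives
\begin{align*}
\frac{\rho_\Omega(w)(w-j(w))^2}{4}=\frac{f'(z)}{\overline{f'(z)}\,\bar z^2Q^2}.
\end{align*}
\end{itemize}
This is exactly $-j_{\bar w}(w)$, which proves the first case.

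\emph{Step 2 ($w\in\Omega^*$).} Put $u=j(w)\in\Omega$. Since $j$ is its own inverse, the inverse-map formulas for Wirtinger derivatives give
\begin{align*}
j_{\bar w}(w)=\frac{-j_{\bar u}(u)}{|j_u(u)|^2-|j_{\bar u}(u)|^2}=\frac{1}{\overline{j_{\bar u}(u)}\,\bigl(1-|j_u/j_{\bar u}|^2(u)\bigr)},
\end{align*}
where the second equality uses that $j$ reverses orientation. Two ingredients are then substituted.
\begin{itemize}
\item From Step~1, conjugated: $\overline{j_{\bar u}(u)}=-\rho_\Omega(j(w))\,\overline{(w-j(w))}^2/4$.
\item Writing $j=F\circ c\circ f^{-1}$ near $u$, the ratio $|j_u/j_{\bar u}|(u)$ equals $|\mu_F(1/\bar\zeta)|$ with $\zeta=f^{-1}(u)=f^{-1}(j(w))$. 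The same observation appears in the proof of Lemma~\ref{lem: comparison for F}. By the Beltrami formula for $F$ this is $2\|Sf(\zeta)\|_\Delta$.
\end{itemize}
Substituting both yields the stated second case.

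\emph{Main obstacle.} The delicate step is the bookkeeping of conjugations and signs in Step~2. One must get the correct orientation-reversing inverse formula and confirm that the denominator produces exactly $1-4\|Sf(\zeta)\|_\Delta^2$ rather than its reciprocal. It is also worth noting that this step needs $\|Sf\|_\Delta<1/2$ globally: that hypothesis is what makes $j$ a genuine global quasireflection, so that $j^2=\Id$ holds on all of $\hat{\C}$ and the denominator is nonzero.
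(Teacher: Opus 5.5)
Your proof is correct. Step 1 is the paper's own computation: the chain rule through the antiholomorphic map $z\mapsto 1/\bar z$, the formula for $\partial_z F$ on $\Delta^*$ from Section~\ref{sec: Hyp and QuasiHyp}, and the osculating identity $w-j(w)=-f'(z)(1/\bar z-z)/Q$. You also read $j$ correctly as $F\circ c\circ F^{-1}$.

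The routes diverge for $w\in\Omega^*$.

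\textbf{The paper's route.} It writes $j=f\circ c\circ F^{-1}$ near $w$ and computes afresh. It differentiates the quasiconformal inverse, $\overline{\partial_w F^{-1}}=1/\bigl(\overline{F_z}\,(1-|\mu_F|^2)\bigr)$, reusing the expression for $j_{\bar z}$ from the proof of Lemma~\ref{lem: comparison for F}. It then converts $\overline{1-Nf(1/\bar z)(z-1/\bar z)/2}$ into $\overline{w-j(w)}$ and $\rho_\Omega(j(w))$ by the same osculating identity, now at $z=F^{-1}(w)$.

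\textbf{Your route.} You apply the Wirtinger inverse-function formula to $j$ itself, using $j\circ j=\Id$. This reduces the second case to the first case evaluated at $u=j(w)\in\Omega$, together with the dilatation $|j_u/j_{\bar u}|(u)=|\mu_F(1/\bar\zeta)|$.

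\textbf{Comparison.} Your route avoids redoing the algebra with $F_z$ on $\Delta^*$. It also shows structurally why the $\Omega^*$ formula is the reciprocal of the conjugate of the $\Omega$ formula at $j(w)$, divided by $1-|\mu_F|^2$. The paper's version is a self-contained pointwise computation, at the cost of repeating it.

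One small point: the observation in Lemma~\ref{lem: comparison for F} is made at points of $\Omega^*$, whereas you need it at $u\in\Omega$. Your direct chain-rule argument via $j=F\circ c\circ f^{-1}$ supplies exactly what is needed, and it is correct.
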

\begin{proof}
    By the definition of $F$, 
    \begin{align*}
        1-Nf(\frac{1}{\bar{z}})\frac{z-\frac{1}{\bar{z}}}{2} = \frac{f'(\frac{1}{\bar{z}})(z-\frac{1}{\bar{z}})}{F(z) - f(\frac{1}{\bar{z}})}
    \end{align*}
    for all $z\in \Delta^*$. For $w\in \Omega$,
    \begin{align*}
        j_{\bar{w}}(w) = F_z\left(\frac{1}{\overline{z}}\right) \frac{-1}{\bar{z}^2} \overline{(f^{-1})'(w)}
    \end{align*}
    where $z = f^{-1}(w)$. Recall from Section \ref{sec: Hyp and QuasiHyp} that 
    $$F_z(z) = \frac{f'(\frac{1}{\bar{z}})}{(1-Nf(\frac{1}{\bar{z}})\frac{z-\frac{1}{\bar{z}}}{2})^2}$$ 
    for all $z\in \Delta^*$. Therefore,
    \begin{align*}
        j_{\bar{w}}(w) &= \frac{f'(\frac{1}{\bar{z}})}{(1-Nf(\frac{1}{\bar{z}})\frac{z-\frac{1}{\bar{z}}}{2})^2}\frac{-1}{\bar{z}^2} \overline{(f^{-1})'(w)} \\
        &= -\frac{(j(w)-w)^2}{f'(\frac{1}{\bar{z}})(z-\frac{1}{\bar{z}})^2\bar{z}^2\overline{f'(z)}} \\
        &=  -\frac{(j(w)-w)^2\rho_\Omega(w)}{4}.
    \end{align*}

    Likewise, for $w\in \Omega^*$, as in the proof of Lemma \ref{lem: comparison for F}, 
    \begin{align*}
        j_{\bar{z}}(w) = \left(\frac{f'(\frac{1}{\bar{z}})}{\overline{f'(\frac{1}{\bar{z}})}}\right)\left(\frac{-1}{\overline{z}^2}\right)\frac{\left(\overline{1-Nf(\frac{1}{\bar{z}})\frac{(z-\frac{1}{\bar{z}})}{2}}\right)^2}{1-|\mu_F(z)|^2}
    \end{align*}
    where $z = F^{-1}(w)$. Therefore,
    \begin{align*}
        j_{\bar{w}}(w) = -\frac{|f'(\frac{1}{\bar{z}})|^2(1-|\frac{1}{z}|^2)^2}{(\overline{w-j(w)})^2(1-|\mu_F(z)|^2)} = -\frac{4}{(\overline{w-j(w)})^2\rho_{\Omega}(j(w))(1-|\mu_F(z)|^2)}.
    \end{align*}
    Note that $|\mu_F(z)|^2 = 4\|Sf(\frac{1}{\bar{z}})\|_\Delta^2$. 
\end{proof}
To show the second inequality for Theorem \ref{thm: global bound on EN}, we consider the conformal metric $\rho(w)|dw|^2$ on $\Omega^*$ with $\rho(w) = \frac{1}{|w-j(w)|^4\rho_\Omega(j(w))} = \frac{\rho_\Omega(j(w))}{16|j_{\bar{w}}(j(w))|^2}$. The following lemma summarizes the properties of this metric:
\begin{lemma}\label{lem: properties of h}
    Assume that $\|Sf\|_\Delta<1/2$, where $f$ is a uniformization map for $\Omega$. Let $\rho(w)|dw^2| = \frac{\rho_\Omega(j(w))}{|j_{\bar{w}}(j(w))|^2} |dw|^2$ be the metric on $\Omega^*$ where $j$ is the quasireflection given in this section. Then, $\rho$ is a complete metric with curvature
    \begin{align*}
        K_\rho(w) = \frac{-16}{1-4\|Sf(z)\|_\Delta^2}
    \end{align*}
    where $z = f^{-1}(j(w))$.
\end{lemma}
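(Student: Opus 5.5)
The plan is to prove completeness by comparing $\rho$ with the hyperbolic metric of $\Omega^*$, and to compute the curvature after normalizing by M\"obius transformations.

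\emph{Completeness.} By definition of $EN$ (applied at the point $j(w)\in\Omega$, with $j(j(w))=w$),
\begin{align*}
EN(j(w)) = |j(w)-w|^4\rho_\Omega(j(w))\rho_{\Omega^*}(w),
\qquad\text{so}\qquad
\rho(w) = \frac{\rho_{\Omega^*}(w)}{EN(j(w))}.
\end{align*}
Since $\|Sf\|_\Delta<1/2$, $j$ is a global $K$-quasireflection. The Earle--Nag lemma quoted in Section \ref{sec: result} then gives $C(K)^{-1}\le EN\le C(K)$. Hence $\rho$ and $\rho_{\Omega^*}$ are bi-Lipschitz equivalent, and completeness of $\rho$ follows from completeness of $\rho_{\Omega^*}$.

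\emph{Reduction to a normal form.} For the curvature I would first check naturality. Let $M$ be a M\"obius transformation and $A\in\PSU(1,1)$, and replace $f$ by $M\circ f\circ A$. The osculating M\"obius maps satisfy $M(f\circ A)(A^{-1}z,\cdot)=M\circ Mf(z,\cdot)\circ A$, and $A$ commutes with $z\mapsto 1/\bar z$. So $F$ is replaced by $M\circ F\circ A$ and $j$ by $M\circ j\circ M^{-1}$. The identity $|M(a)-M(b)|^2=|M'(a)||M'(b)||a-b|^2$ and the conformal naturality of $\rho_\Omega$ then show that $\rho|dw|^2$ is carried to the corresponding metric by $M$. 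The quantity $\|Sf(z)\|_\Delta$ is invariant under these operations. Hence it suffices to compute $K_\rho$ at a single point $w_0=F(1/\bar z_0)$ after normalizing $z_0=0$, $f(0)=0$, $f'(0)=1$, $f''(0)=0$. With this normalization $Mf(0,\cdot)=\Id$, so $w_0=F(\infty)=\infty$. To avoid working at $\infty$, I would instead normalize $z_0=0$ and post-compose so that $w_0$ is a finite point. Since curvature is intrinsic, I could alternatively pull the metric back to $\Delta$ via $z\mapsto F(1/\bar z)$.

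\emph{Curvature computation.} In the coordinate $z\in\Delta$, using $1/\bar z-z=(1-|z|^2)/\bar z$ and the formula for $Mf$, I get
\begin{align*}
|w-f(z)|^4\rho_\Omega(f(z))=\frac{4|f'(z)|^2(1-|z|^2)^2}{\bigl|\bar z-\tfrac12 Nf(z)(1-|z|^2)\bigr|^4}.
\end{align*}
The pullback of $|dw|^2$ is computed from $F_z$ (already computed in Section \ref{sec: Hyp and QuasiHyp}) together with $\mu_F=-2Sf/\rho_\Delta$. The pulled-back metric is not conformal in $z$, so I would use the Brioschi formula or orthonormal frames. A cleaner route is to stay in the $w$-coordinate and compute $K_\rho=-\Delta_w\log\rho/(2\rho)$. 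Here $\log\rho(w)=-4\log|w-j(w)|-\log\rho_\Omega(j(w))$, and I would differentiate using $j_{\bar w}$ and $j_w$ from Lemma \ref{lem: partial bar z of j}. At the normalized point, only $Sf(0)$ and $(Sf)'(0)$ can enter. I expect the $(Sf)'$ terms to cancel, leaving $-16/(1-|\mu_F|^2)=-16/(1-4\|Sf(z)\|_\Delta^2)$.

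\emph{Main obstacle.} The main difficulty is this second-order differentiation of $j$, which involves the non-holomorphic map $j$ and derivatives of $\rho_\Omega$ at $j(w)$. To control it, I would try writing $\rho=\rho_\Omega(j)\,/\,(16|j_{\bar w}(j)|^2)$ and using that $\log\rho_\Omega$ satisfies $\Delta\log\rho_\Omega=2\rho_\Omega$ (curvature $-4$ for $\rho_\Delta=4|dz|^2/(1-|z|^2)^2$). This would reduce the computation to the Laplacian of $\log|j_{\bar w}\circ j|$ together with the Jacobian factor $|j_{\bar w}|^2(1-|\mu|^2)$, which should produce the denominator $1-4\|Sf\|_\Delta^2$.
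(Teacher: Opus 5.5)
Your completeness argument is fine, and it differs from the paper's. You write $\rho=\rho_{\Omega^*}/(EN\circ j)$ and invoke the Earle--Nag bound. The paper instead pulls $\rho$ back by $j$ and uses $|j_w/j_{\bar w}|=|\mu_F|\le 2\|Sf\|_\Delta$ to get $(1-2\|Sf\|_\Delta)^2\rho_\Omega\le j^*\rho\le(1+2\|Sf\|_\Delta)^2\rho_\Omega$. That argument is self-contained and gives explicit constants. Your $\rho$ is the text's $1/(|w-j(w)|^4\rho_\Omega(j(w)))$, which is $1/16$ of the formula displayed in the statement; the value $-16/(1-4\|Sf\|_\Delta^2)$ is correct for your normalization. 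Also, $\Delta\log\rho_\Omega=2\rho_\Omega$ corresponds to curvature $-1$, not $-4$.

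The genuine gap is the curvature, which is the substance of the lemma. You never compute it, and ``I expect the $(Sf)'$ terms to cancel'' is precisely what has to be shown. Both routes you list, Brioschi on the non-conformal pullback and second-order differentiation of $j$ at a normalized point, are left undone. The Liouville reduction does not close them either: $\partial_w\partial_{\bar w}(\log\rho_\Omega\circ j)$ for non-holomorphic $j$ contains $\partial_a^2\log\rho_\Omega$ weighted by $j_wj_{\bar w}\neq 0$, plus terms in $j_{w\bar w}$, and the Liouville equation controls neither.

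The missing idea is the first-order identity
\[
\partial_a\log\rho_\Omega(a)=\frac{2}{j(a)-a},\qquad a\in\Omega .
\]
This is the Epstein description of $j$: combine Lemma \ref{lem: naturality of osc} with the Appendix formula $j(a)=a+2/\overline{(\log\rho_\Omega)_{\bar a}}$. It can also be checked directly: for $a=f(\zeta)$, both sides equal $\frac{1}{f'(\zeta)}\bigl(\frac{2\bar\zeta}{1-|\zeta|^2}-Nf(\zeta)\bigr)$.

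Apply the identity at $a=j(w)$, so that $j(a)=w$. Every chain-rule term containing $j_w$ or $j_{\bar w}$ cancels, and you get exactly
\[
\partial_{\bar w}\log\bigl(|w-j(w)|^4\rho_\Omega(j(w))\bigr)=\frac{2}{\overline{w-j(w)}} .
\]
One further $\partial_w$ then needs only $j_{\bar w}$ on $\Omega^*$. Lemma \ref{lem: partial bar z of j} supplies it, with the factor $1-|\mu_F|^2=1-4\|Sf(\zeta)\|_\Delta^2$, where $\zeta=f^{-1}(j(w))$. This gives $\partial_w\partial_{\bar w}\log\rho=\frac{8\rho}{1-4\|Sf(\zeta)\|_\Delta^2}$, and hence $K_\rho=-2\partial_w\partial_{\bar w}\log\rho/\rho=-16/(1-4\|Sf(\zeta)\|_\Delta^2)$. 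No normalization is needed, and no $(Sf)'$ term ever appears.
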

\begin{proof}
    By pulling back the metric $\rho$ by $j$, we obtain
\begin{align*}
    (1-2\|Sf\|_\Delta)^2\rho_\Omega(w)|dw|^2 &\leq \frac{\rho_\Omega(w)}{|j_{\bar{w}}(w)|^2}|j_w dw + j_{\bar{z}} d{\bar{w}}|^2 = j^*(\rho)  \\
    &= \rho_\Omega(w)\left|\frac{j_w}{j_{\bar{w}}} dw + d{\bar{w}}\right|^2 \leq (1+2\|Sf\|_\Delta)^2\rho_\Omega(w) |dw|^2
\end{align*}
for $w\in \Omega$. Hence, $\rho$ is a complete metric. The curvature of $\rho$ is given by 
\begin{align*}
    K_\rho = -\frac{2 \partial_w\partial_{\bar{w}}\log \rho}{\rho} 
    = \frac{2\partial_w\partial_{\bar{w}}\log(|w-j(w)|^4\rho_\Omega(j(w)))}{\rho}.
\end{align*}
By Lemma \ref{lem: naturality of osc} and the computation in the Appendix, 
\begin{align*}
    j(w) = w + \frac{2}{\overline{(\log\rho_\Omega(w))_{\bar{w}}}} \\
    \iff (\log \rho_\Omega(w))_{\bar{w}} - \frac{2}{\overline{w-j(w)}} = 0 \\
    \iff (\log \rho_\Omega(w))_{w} - \frac{2}{w-j(w)} = 0
\end{align*}
for $w\in \Omega$ as $\rho_\Omega$ is real valued. Therefore,
\begin{align*}
    \partial_{\bar{w}}\log(|w-a|^4\rho_\Omega(a)) &= 
    \partial_{\bar{w}} (2\log (w-a) + 2\log(\overline{w-a}) + \log(\rho_\Omega(a))\\ 
    &= \frac{-2a_{\bar{w}}}{w-a} + \frac{2}{\overline{w-a}} -\frac{2\bar{a}_{\bar{w}}}{\overline{w-a}} + (\log(\rho_\Omega(a)))_a a_{\bar{w}} + (\log(\rho_\Omega(a)))_{\bar{a}} \bar{a}_{\bar{w}} \\
    &= \frac{2}{\overline{w-a}}
\end{align*}
where $a=j(w)$. By Lemma \ref{lem: partial bar z of j}, its $\partial_w$ derivative gives
\begin{align*}
    \partial_w \left(\frac{2}{\overline{w-j(w)}}\right) &= -2\frac{\overline{j(w)_{\bar{w}}}}{(\overline{w-j(w)})^2} \\ &= \frac{8}{|w-j(w)|^4\rho_\Omega(j(w)) (1-4\|Sf(z)\|^2_\Delta)} \\
    &= \frac{8\rho(w)}{1-4\|Sf(z)\|^2_\Delta}
\end{align*}
where $z = f^{-1}(j(w))$. 
\end{proof}
The last ingredient comes from the following theorem of Yau:
\begin{theorem}[See Theorem 2' of \cite{yau1978general}] \label{thm: yau schwarz lemma}
    Let $M$ be a complete Riemann surface with curvature bounded from below by a constant $K_1$. Let $N$ be another Hermitian manifold with holomorphic sectional curvature bounded from above by a negative constant $K_2$. Then for any nonconstant holomorphic function $f:M \to N$,
    \begin{align*}
        f^*dS_N^2 \leq \frac{K_1}{K_2}dS_M^2.
    \end{align*}
\end{theorem}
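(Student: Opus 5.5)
The approach is Yau's argument: a Bochner-type (Chern--Lu) differential inequality for the energy density of $f$, combined with the Omori--Yau generalized maximum principle on the complete surface $M$. Since $M$ is one-dimensional, the pullback $f^*dS_N^2$ is a nonnegative multiple of $dS_M^2$. Write
\[
u=\frac{f^*dS_N^2}{dS_M^2}.
\]
The function $u$ is smooth and nonnegative. Because $f$ is holomorphic and nonconstant, its zeros are isolated: they occur exactly at the critical points of $f$. The goal is the pointwise bound $u\leq K_1/K_2$. We may assume $K_1<0$; if $K_1\geq 0$ the argument below forces $u\equiv 0$, contradicting nonconstancy.

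\textbf{Step 1: the Chern--Lu inequality.} On the open set $\{u>0\}$, compute $\Delta_M\log u$, with $\Delta_M$ the Laplace--Beltrami operator normalized so that $K=-\tfrac12\Delta\log(\text{conformal factor})$, matching the paper's formula for $K_\rho$. Since $f$ is holomorphic from a curve, the image is an immersed holomorphic curve away from critical points. Its Gaussian curvature is at most the holomorphic sectional curvature of $N$ in the direction $df(T_pM)$, because the Gauss equation for a complex submanifold only decreases curvature. This yields
\[
\tfrac12\Delta_M\log u \;\geq\; K_M - K_N(df)\,u \;\geq\; K_1 - K_2\,u .
\]
In local conformal coordinates this is essentially the identity $K_{f^*dS_N^2}=K_M-\tfrac12\Delta_M\log u$, together with the curvature comparison for the image curve.

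\textbf{Step 2: bounding $u$ via Omori--Yau.} If $u$ attained an interior maximum, then $\Delta_M\log u\leq 0$ there, so $K_1-K_2u\leq 0$, which gives $u\leq K_1/K_2$ (recall $K_2<0$). On a noncompact complete $M$ we instead invoke the Omori--Yau principle, which holds because the Ricci curvature $K_M$ is bounded below by $K_1$. Since $u$ may be unbounded a priori, apply it to the bounded function $\phi=(u+1)^{-1/2}$, following Yau. This gives a sequence $p_k$ with
\[
\phi(p_k)\to\inf\phi,\qquad |\nabla\phi|(p_k)\to 0,\qquad \Delta\phi(p_k)\geq -1/k .
\]
Rewrite $\Delta\phi$ in terms of $\Delta\log u$ and $|\nabla\log u|^2$, and insert Step 1. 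Along the sequence the gradient terms vanish, leaving
\[
\limsup_k\big(K_1-K_2u(p_k)\big)\leq 0 ,
\]
and hence $\sup u\leq K_1/K_2$. If $\inf\phi=1$, i.e.\ $u\equiv0$, then $f$ is constant, so this case is excluded.

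\textbf{Main obstacle.} The delicate step is Step 2. One must choose the auxiliary function, here $(u+1)^{-1/2}$, so that the cross term from $|\nabla u|^2$ has a favorable sign or vanishes in the limit, with every constant tracked. A second delicate point is Step 1 at the critical points of $f$, where $\log u=-\infty$. These points do not affect the supremum argument, because the maximizing sequence stays where $u$ is large and hence positive. Once these are handled, the stated inequality $f^*dS_N^2\leq \frac{K_1}{K_2}dS_M^2$ follows.
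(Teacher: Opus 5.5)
The paper does not prove this statement; it only quotes it from Yau. Your outline (a Chern--Lu inequality for $\log u$, followed by the Omori--Yau principle applied to $\phi=(u+1)^{-1/2}$) is Yau's own argument.

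Step 1 is fine, up to a typo: the identity should read $u\,K_{f^*dS_N^2}=K_M-\tfrac12\Delta_M\log u$, and this is what your displayed inequality actually uses. For a non-K\"ahler Hermitian $N$, the comparison $K_{f^*dS_N^2}\le K_N(df)$ comes from the decrease of Chern curvature on the holomorphic line subbundle $df(TM)\subset f^*TN$, not from a Gauss equation. In the paper's application $N$ is a Riemann surface, so the comparison is an equality.

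Step 2, however, does not work as written, and it is where the real content lies. Express $\Delta_M\phi$ through $u$ and insert Step 1. The surviving gradient term is $\frac{(u+1)^{1/2}(u-2)}{u}|\nabla\phi|^2$, or $3|\nabla\phi|^2/\phi$ if you discard the favourable term $|\nabla u|^2/u$. Knowing only $|\nabla\phi|(p_k)<1/k$, this term need not tend to $0$ when $u(p_k)\to\infty$, and that is exactly the case you must exclude. For the same reason, you cannot divide by the weight $u(u+1)^{-3/2}$ multiplying $K_1-K_2u$ to reach $\limsup_k(K_1-K_2u(p_k))\le 0$.

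The repair, which is Yau's, is to multiply by $\phi$ before taking limits. Since $u+1=\phi^{-2}$, one has $\phi\Delta_M\phi=3|\nabla\phi|^2-\tfrac12\phi^4\Delta_M u$. Step 1 gives $\Delta_M u\ge u\Delta_M\log u\ge 2u(K_1-K_2u)$ wherever $u>0$, and the resulting bound holds trivially where $u=0$. Because $0<\phi\le 1$, this yields at $p_k$
\[
\frac{u(K_1-K_2u)}{(u+1)^2}(p_k)\le \frac1k+\frac{3}{k^2}.
\]
As a function of $u$, the left side tends to $-K_2>0$ as $u\to\infty$. Also $u(p_k)\to\sup u$, because $\phi(p_k)\to\inf\phi$. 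Hence $\sup u<\infty$. Letting $k\to\infty$ then gives $\sup u\,(K_1-K_2\sup u)\le 0$, so $u\le K_1/K_2$ unless $u\equiv 0$. This also covers your case $K_1\ge 0$. With this correction your proof is complete.
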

\begin{proof}[Proof of Theorem \ref{thm: global bound on EN}]
    Recall our goal of showing that $$EN(w) = 4|w-j(w)|^2\rho_{\Omega^*}(j(w))||j_{\bar{z}}(w)| \leq \frac{16}{1-4\|Sf\|_\Delta^2}$$
    for $f:\Delta \to \Omega$ a uniformization map for $\Omega$ and $\|Sf\|_\Delta < \frac{1}{2}$. The first equality is shown in Lemma \ref{lem: partial bar z of j}. For the second inequality, by Theorem \ref{thm: yau schwarz lemma}, we apply it to the identity map $(\Omega^*,\rho)\to (\Omega^*,\rho_{\Omega^*})$, we obtain $\rho_{\Omega^*}(w) \leq \frac{16\rho(w)}{1-4\|Sf\|_\Delta^2}$. Hence, the theorem follows from $EN(j(w)) = \frac{\rho_{\Omega^*}(w)}{\rho(w)}$ for all $w\in\Omega^*$. 
\end{proof}

\subsection{Local Section for the Bers Embedding}
In this subsection, we give a brief overview of Bers Embedding and establish bounds related to this embedding. Let $\Omega$ be a Jordan domain bounded by a quasicircle and $L_1^\infty(\Omega)$ be the space of measurable functions on $\Omega$ with $L^\infty$ norm less than $1$. The universal Teichm\"{u}ller space over $\Omega$ is given by 
\begin{align*}
    T(\Omega) = L_1^\infty(\Omega)/\sim
\end{align*}
where $\mu \sim \nu$ if and only if $F_\mu|_{\Omega^*} = F_\nu|_{\Omega^*}$ with $F_\mu$ being the solution to the equation $\partial_{\bar{z}} F_\mu = \mu^\vee \partial_z F_\mu$, $\mu^\vee$ is the extension of $\mu$ by $0$ to $\hat{\C}$ subject to a normalization (See \cite{AhlforsBersMapping} for detail). The Bers embedding is an embedding of $T(\Omega)$ into the space of quadratic differentials $Q(\Omega^*)$ via the Schwarzian derivative. More explicitly, the embedding $\beta:T(\Omega) \to Q(\Omega^*)$ is defined by
\begin{align*}
    \beta([\mu]) = S(F_\mu|_{\Omega^*}).
\end{align*}

Let $Q_C(\Omega) = \{q\in Q(\Omega)\mid\|q\|_\Omega < C\}$ where $\|q\|_\Omega = \sup_{w\in \Omega} \frac{|q(w)|}{\rho_{\Omega}(w)}$. Earle and Nag \cite{Earle:qcreflection} showed that for appropriate constant $C$ depending on the quasicircle, for $j$ the quasireflection that is equivariant under the group of M\"{o}bius transformation, the Ahlfors map $\alpha: Q_C(\Omega) \to L_\infty(\Omega^*)$ given by 
\begin{align*}
    \alpha(\phi)(w) = \frac{\phi(j(w)) (w-j(w))^2j_{\bar{z}}(w)}{2+\phi(j(w))(w-j(w))^2 j_z(w)}
\end{align*} 
for $w\in \Omega^*$ is well-defined. The following summarizes the origin of this map:
For $\phi \in Q_C(\Omega)$, we can find a locally univalent map $u:\Omega \to \C$ such that $Sh = \phi$. Consider the extension of $h$ by
\begin{align*}
    H(z) = \begin{cases}
        h(z) &, z\in \Omega, \\
        Mh(j(z),z) &, z\in \Omega^*.
    \end{cases}
\end{align*}
Then $\alpha(\phi)(z) = \mu_H(z)$ for $z\in \Omega^*$. It is worth mentioning (See Theorem 4 of \cite{Earle:qcreflection}) that when $\|\alpha(\phi)\|_\infty < 1$, $H$ is a quasiconformal homeomorphism of $\hat{\C}$ with $SH|_\Omega = \phi$, therefore giving a section from $Q_C(\Omega)$ to $L_1^\infty(\Omega^*)$. 

The following corollary gives a bound on $C$ in terms of the Schwarzian norm of the uniformization map $f:\Delta\to \Omega$.
\begin{corollary}\label{cor: ahlfors section for omega}
    Let $f:\Delta \to \Omega$ be a uniformization map of $\Omega$ with $\|Sf\|_\Delta<1/2$. If $C = \frac{1}{2} - \|Sf\|_\Delta$, then $\alpha_C = \alpha|_{Q_C(\Omega)}:Q_C(\Omega)\to L_1^\infty(\Omega^*)$ is a well-defined section.
\end{corollary}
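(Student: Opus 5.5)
We need $\|\alpha(\phi)\|_\infty<1$ whenever $\|\phi\|_\Omega<C=\tfrac12-\|Sf\|_\Delta$. Well-definedness then means the denominator never vanishes; by Theorem 4 of \cite{Earle:qcreflection}, the section property follows from $\|\alpha(\phi)\|_\infty<1$.

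We bound numerator and denominator of $\alpha(\phi)(w)$ for $w\in\Omega^*$ via Lemma \ref{lem: partial bar z of j} and Theorem \ref{thm: global bound on EN}. Write $a=j(w)\in\Omega$. Take moduli and divide by $|j_{\bar z}(w)|$. Using $|j_z/j_{\bar z}|(w)=|\mu_F(F^{-1}(w))|=2\|Sf(\zeta)\|_\Delta\le 2\|Sf\|_\Delta$ with $\zeta=f^{-1}(a)$, we get
\begin{align*}
|\alpha(\phi)(w)|\le \frac{|\phi(a)||w-a|^2}{2/|j_{\bar z}(w)|-2\|Sf\|_\Delta|\phi(a)||w-a|^2}.
\end{align*}
Set $X=|\phi(a)||w-a|^2\,|j_{\bar z}(w)|/2$. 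It then suffices to show $X<\tfrac{1}{2(1+2\|Sf\|_\Delta)}$, since $\frac{2X}{1-2\|Sf\|_\Delta\cdot 2X}<1$ iff $2X(1+2\|Sf\|_\Delta)<1$. Positivity of the denominator comes out of the same bound.

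To estimate $X$, write $|\phi(a)|\le\|\phi\|_\Omega\,\rho_\Omega(a)$. By Lemma \ref{lem: partial bar z of j} (second case),
\[ |j_{\bar z}(w)|=\frac{4}{|w-a|^2\rho_\Omega(a)(1-4\|Sf(\zeta)\|_\Delta^2)}. \]
Hence
\[ X\le \frac{2\|\phi\|_\Omega}{1-4\|Sf(\zeta)\|^2_\Delta}\le\frac{2\|\phi\|_\Omega}{(1-2\|Sf\|_\Delta)(1+2\|Sf\|_\Delta)}. \]
Since $\|\phi\|_\Omega<\tfrac12-\|Sf\|_\Delta=\tfrac12(1-2\|Sf\|_\Delta)$, this gives $X<\frac{1}{1+2\|Sf\|_\Delta}$.

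This is off by a factor of $2$ from the target $\tfrac{1}{2(1+2\|Sf\|_\Delta)}$, so the main obstacle is the exact normalization. I expect the constant $2$ in $\alpha$ to be matched by a convention where $\|\phi\|_\Omega$ is measured against $\rho_\Omega$ with curvature $-1$ (so $\rho_\Delta=4/(1-|z|^2)^2$), under which $\|Sf\|<1/2$ is the sharp Ahlfors–Weill condition. To settle this, I would check the case $\Omega=\Delta$, $f=\mathrm{id}$, $j(w)=1/\bar w$. There the formula should reduce to the classical Ahlfors–Weill $\mu=-2\phi/\rho$, with sup bound $2\|\phi\|<1$, i.e.\ $\|\phi\|<1/2=C$. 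That calibrates the factor: redo the bound with the exact value of $|w-a|^2|j_{\bar z}|\rho_\Omega(a)$ rather than the rounded one, using the precise $\alpha$ (possibly $|\alpha|\le 2X/(1-\dots)$ with $X$ computed without the extra $2$). The structure of the argument is unchanged: bound $|\mu_F|\le 2\|Sf\|$ in the denominator and use the $(1-4\|Sf\|^2)$ factor from Lemma \ref{lem: partial bar z of j}, which cancels exactly to give the threshold $C=\tfrac12-\|Sf\|_\Delta$.
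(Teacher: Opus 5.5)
Your approach is the paper's: bound the numerator with $|\phi(a)|\le\|\phi\|_\Omega\,\rho_\Omega(a)$ and the second case of Lemma~\ref{lem: partial bar z of j}, and bound the denominator from below using $|j_z/j_{\bar z}|=|\mu_F|\le 2\|Sf\|_\Delta$. Your estimate $X<\frac{1}{1+2\|Sf\|_\Delta}$ is also correct. The gap is an algebra slip in the reduction step, and it is the only source of your apparent factor of $2$.

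Write $s=\|Sf\|_\Delta$. The quantity you displayed, $\frac{|\phi(a)||w-a|^2}{2/|j_{\bar z}(w)|-2s|\phi(a)||w-a|^2}$, is not $\frac{2X}{1-4sX}$. You doubled the numerator and the $X$-term in the denominator, but not the constant term. Multiplying numerator and denominator by $|j_{\bar z}(w)|/2$ gives
\[ |\alpha(\phi)(w)|\le \frac{X}{1-2sX}. \]
This is less than $1$, with positive denominator, exactly when $X(1+2s)<1$, which is what you proved. So your sufficient condition $X<\frac{1}{2(1+2s)}$ was spurious. As written, the proposal stops short of a proof and misdiagnoses the difficulty.

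There is no normalization issue. The paper already uses $\rho_\Delta=4(1-|z|^2)^{-2}|dz|^2$. Your own calibration check would have exposed the slip: for $\Omega=\Delta$ and $j(w)=1/\bar w$ one has $j_z=0$ and $|w-a|^2|j_{\bar z}|\rho_\Delta(a)=4$, so $|\alpha(\phi)|=X\le 2\|\phi\|_\Omega$, not $2X$.

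To finish, make the bound uniform in $w$, since you need $\|\alpha(\phi)\|_\infty<1$ and not just a pointwise bound. You have
\[ X\le X_0:=\frac{2\|\phi\|_\Omega}{1-4s^2}<\frac{1}{1+2s}, \]
and $t\mapsto t/(1-2st)$ is increasing on $[0,1/(2s))$. Hence
\[ \|\alpha(\phi)\|_\infty\le\frac{X_0}{1-2sX_0}=\frac{2\|\phi\|_\Omega}{1-4s^2-4s\|\phi\|_\Omega}<1. \]
This is exactly the paper's chain of inequalities, and it also shows the denominator of $\alpha(\phi)$ never vanishes.
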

\begin{proof}
    Let $C=\frac{1}{2} - \|Sf\|_\Delta$. By Lemma \ref{lem: partial bar z of j}, for any $\phi \in Q_C(\Omega)$, \begin{align*}
        |\widetilde{\alpha}(\phi)(w)| &= \left|\frac{\phi(j(w)) (w-j(w))^2j_{\bar{z}}(w)}{2+\phi(j(w))(w-j(w))^2 j_z(w)} \right| \\
        &\leq \frac{\|\phi\|_\Omega \frac{4}{1-4\|Sf\|_\Delta^2}}{\left|2-\|\phi\|_{\Omega}\frac{4}{1-4\|Sf\|_\Delta^2} \|\frac{j_z}{j_{\bar{z}}}\|\right|} \\
        &= \frac{2\|\phi\|_\Omega}{|1-4\|Sf\|_\Delta^2-4\|\phi\|_{\Omega}\|Sf\|_\Delta|} \\
        &< \frac{1-2\|Sf\|_\Delta}{1-4\|Sf\|_\Delta^2 - 2\|Sf\|_\Delta + 4\|Sf\|_\Delta^2} = 1
    \end{align*}
\end{proof}
Corollary \ref{cor: ahlfors section for omega} has the following interpretation. For any $\phi \in Q_C(\Omega)$ with $C = \frac{1}{2} - \|Sf\|_\Delta$, let $u:\Omega\to \C$ be a solution to $Su = \phi$. Then 
$$\|S(u\circ f)\|_\Delta \leq \|Su\|_\Omega + \|Sf\|_\Delta < \frac{1}{2}.$$ By the Ahlfors--Weill Theorem, we can extend $h := u\circ f$ by osculating M\"{o}bius transformation to a quasiconformal homeomorphism $H$. Similarly, let $F$ be the Ahlfors--Weill extension of $f$. Therefore, $H\circ F^{-1}$ is a quasiconformal extension of $u$. In fact, for all $w\in \Omega^*$,
\begin{align*}
    H(F^{-1}(w)) &= Mh\left(\frac{1}{\overline{F^{-1}(w)}},F^{-1}(w)\right) \\
    &= Mu\left(f\left(\frac{1}{\overline{F^{-1}(w)}}\right),Mf\left(\frac{1}{\overline{F^{-1}(w)}},F^{-1}(w)\right)\right) \\
    &= Mu(j(w),w).
\end{align*}
Therefore, the Beltrami differential of $H\circ F^{-1}|_{\Omega^*}$ is equal to $\alpha(\phi)$ and $\|\alpha(\phi)\|_\infty<1$ since $H\circ F^{-1}$ is a quasiconformal homeomorphism.

Likewise, we can consider $L_1^\infty(\Omega)$ and the corresponding local section $\widetilde{\alpha}_C:Q_C(\Omega^*)\to L_1^\infty(\Omega)$. By an argument similar to that of Corollary \ref{cor: ahlfors section for omega}, we obtain the following corollary:
\begin{corollary} 
    Let $f:\Delta \to \Omega$ be a uniformization map of $\Omega$ with $\|Sf\|_\Delta<1/2$. If $C = \frac{1}{2}-\|Sf\|_\Delta$, then $\widetilde{\alpha}_C:Q_C(\Omega^*)\to L_1^\infty(\Omega)$ is a well-defined section.
\end{corollary}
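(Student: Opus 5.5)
We argue exactly as in the proof of Corollary \ref{cor: ahlfors section for omega}, with the roles of $\Omega$ and $\Omega^*$ exchanged. Let $\phi \in Q_C(\Omega^*)$ with $C = \frac{1}{2}-\|Sf\|_\Delta$. For $w\in \Omega$, the section is
\begin{align*}
    \widetilde{\alpha}(\phi)(w) = \frac{\phi(j(w))(w-j(w))^2 j_{\bar{w}}(w)}{2+\phi(j(w))(w-j(w))^2 j_w(w)},
\end{align*}
and $j(w)\in\Omega^*$. We must show $\|\widetilde{\alpha}(\phi)\|_\infty<1$, which needs two ingredients: an upper bound on $|w-j(w)|^2|j_{\bar{w}}(w)|\,\rho_{\Omega^*}(j(w))$ for $w\in\Omega$, and a bound on $|j_w/j_{\bar w}|$ on $\Omega$.

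The second ingredient comes from the chain rule. Since $j\circ j=\Id$, we have $\mu_{j^{-1}}$ at $j(w)$ equal to $\mu_j$ up to unimodular factors. Alternatively, on $\Omega$ we have $j = F\circ(1/\bar z)\circ f^{-1}$ with $f^{-1}$ conformal, so $|j_w/j_{\bar w}|(w) = |\mu_F(1/\overline{f^{-1}(w)})| = 2\|Sf(f^{-1}(w))\|_\Delta \le 2\|Sf\|_\Delta$.

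For the first ingredient, Lemma \ref{lem: partial bar z of j} gives $|j_{\bar w}(w)| = \rho_\Omega(w)|w-j(w)|^2/4$ on $\Omega$. Hence
\begin{align*}
    |w-j(w)|^2|j_{\bar w}(w)|\rho_{\Omega^*}(j(w)) = \tfrac14 |w-j(w)|^4\rho_\Omega(w)\rho_{\Omega^*}(j(w)) = \tfrac{1}{4}EN(w).
\end{align*}
Theorem \ref{thm: global bound on EN} then gives $EN(w)\le \frac{16}{1-4\|Sf\|_\Delta^2}$, so the quantity is at most $\frac{4}{1-4\|Sf\|_\Delta^2}$. This is the same constant used in the $\Omega$ case, now obtained through the Yau--Schwarz bound rather than directly from Lemma \ref{lem: partial bar z of j}. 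This step is the main point to check: the bound must hold at every $w\in\Omega$ with $j(w)\in\Omega^*$. Theorem \ref{thm: global bound on EN} is stated for all $w\in\Omega$, and $EN(w)$ is symmetric in the required sense, so it applies.

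We now insert both ingredients, writing $s=\|Sf\|_\Delta$. Using $|\phi(j(w))|\le \|\phi\|_{\Omega^*}\rho_{\Omega^*}(j(w))$,
\begin{align*}
    |\widetilde{\alpha}(\phi)(w)| \le \frac{\|\phi\|_{\Omega^*}\frac{4}{1-4s^2}}{2-\|\phi\|_{\Omega^*}\frac{4}{1-4s^2}\,2s} = \frac{2\|\phi\|_{\Omega^*}}{1-4s^2-4s\|\phi\|_{\Omega^*}}.
\end{align*}
The right-hand side is increasing in $\|\phi\|_{\Omega^*}$. Substituting $\|\phi\|_{\Omega^*}<\frac12 - s$, the denominator becomes $1-2s$ and the numerator $1-2s$, so $|\widetilde\alpha(\phi)|<1$. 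The denominator is positive throughout this range, so the estimate is legitimate.

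Finally, the section interpretation follows as before. For $u$ on $\Omega^*$ with $Su=\phi$, compose with the uniformization $g$ of $\Omega^*$. The bound $\|Sg\|_{\Delta^*}$ is not directly available, but we do not need it, since well-definedness only requires $\|\widetilde\alpha(\phi)\|_\infty<1$, after which Theorem 4 of \cite{Earle:qcreflection} applies.
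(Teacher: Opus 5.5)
Your proposal is correct and is essentially the argument the paper intends by ``similar to that of Corollary~\ref{cor: ahlfors section for omega}''. On $\Omega$ the numerator quantity $|w-j(w)|^2|j_{\bar w}(w)|\rho_{\Omega^*}(j(w))$ equals $EN(w)/4$, which Theorem~\ref{thm: global bound on EN} bounds by $4/(1-4\|Sf\|_\Delta^2)$; that theorem is stated in exactly this form for this purpose. Combined with $|j_w/j_{\bar w}|\le 2\|Sf\|_\Delta$ on $\Omega$, the same arithmetic as in the $\Omega$ case gives $\|\widetilde\alpha(\phi)\|_\infty<1$.
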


\section{Appendix}

\subsection{Metric corresponding to extension $P$} In this section, we will give the metric that is naturally related to the extension $P$. 

Recall that, given a univalent map $f:\Delta \to \C$, the map $P:\hat{\C} \to \hat{\C}$ is given by
\begin{align*}
    P(z) = \begin{cases}
        f(z) &,\text{if } z\in \overline\Delta, \\
        f(\frac{1}{\bar{z}}) + f'(\frac{1}{\bar{z}})(z-\frac{1}{\bar{z}}) &,\text{if } z\in \Delta^*.
        \end{cases}
\end{align*}
Following the notation in Proposition \ref{prop: properties of general Ep}, we want to find a smooth conformal metric $\rho = e^{\varphi}|dw|^2$ on $\Omega = f(\Delta)$ such that 
$$\lim_{t\to -\infty} \g_t(\widetilde{\Ep_\rho}(w))= P\left(\frac{1}{\overline{P^{-1}(w)}}\right)$$
for all $w\in \Omega$. We proceed by examining the explicit formula for $\Ep_\rho := \pi(\widetilde{\Ep_\rho}):\Omega \to \HH^3$ where $\pi:\mathsf{U}\HH^3 \to \HH^3$ is the projection onto its base point. From Section 3.3 of \cite{BBVW_universal}, $\Ep_\rho(w) = (W,\xi) \in \C \times \R_{>0}$ where 
\begin{align*}
    \xi = \frac{2e^{-\varphi/2}}{1+|\varphi_{\bar{w}}|^2e^{-\varphi}}, \qquad W = w+\xi\varphi_{\bar{w}}e^{-\varphi/2}. 
\end{align*}
Let $\xi_t = \frac{2e^{-\varphi/2-t}}{1+|\varphi_{\bar{w}}|^2e^{-\varphi-2t}}$. By Proposition \ref{prop: properties of general Ep}, 
\begin{align*}
    \lim_{t\to -\infty} \g_t(\widetilde{\Ep_\rho}(w))= \lim_{t\to -\infty}(w+\xi_t\varphi_{\bar{w}}e^{-\varphi/2-t}, \xi_t) = \left(w+\frac{2}{\overline{\varphi_{\bar{w}}}},0\right). 
\end{align*}
Therefore, as $\varphi$ is real-valued, 
\begin{align*}
    \varphi_{w}(w) &= \overline{\varphi_{\bar{w}}}(w) \\
    &= \frac{2}{P\left(\frac{1}{\overline{P^{-1}}(w)}\right)-w} \\
    &= \frac{2}{f'(f^{-1}(w))\left(\frac{1}{\overline{f^{-1}}(w)}-f^{-1}(w)\right)} \\
    &= \frac{2(f^{-1})'(w)\overline{f^{-1}}(w)}{1-|f^{-1}(w)|^2}.
\end{align*}
As $f$ is holomorphic, $\varphi(w) = -2\log(1-|f^{-1}(w))|^2)+Q(w)$ for some anti-holomorphic function $Q$. Since $\varphi$ is real-valued, $Q(w) = \bar{Q}(w)$ is a real-valued holomorphic function which, by open mapping theorem, implies that $Q(w)$ is a constant function in $\R$. Therefore, a metric corresponding to $P$ is given by 
\begin{align*}
    \rho = \frac{4}{(1-|f^{-1}(w)|^2)^2} |dw|^2.
\end{align*}

\printbibliography

\end{document}